\newtheorem{theoreme}{Theorem}[section]
\newtheorem{prop}[theoreme]{Proposition}
\newtheorem{lem}[theoreme]{Lemma}
\newcommand{\finpreuvem}{\quad\square}
\renewcommand{\epsilon}{\varepsilon}
\def\btab{\begin{eqnarray*}}
\def\etab{\end{eqnarray*}}
\def\beq{\begin{equation}}
\def\eeq{\end{equation}}
 \newcommand{ \p }{\mathbb{P} }
 \newcommand{ \E }{\mathbb{E}}
 \newcommand{ \R }{ \mathbb{R} }
 \newcommand{\N}{ \mathbb{N} }
\newcommand{ \bp}{ \mathbf{p}}
\newcommand{ \1}{ \mathbf{1}}
\newtheorem{The}{{\bf Theorem}}[section]
\newtheorem{Def}[The]{{\bf Definition}}
\newtheorem{Rem}[The]{{\bf Remark}}
\title{Diseases transmission in a $z$-ary tree 
\author{  P. Debs, T. Haberkorn  \footnote{Laboratoire MAPMO - C.N.R.S. UMR 7349 - F\'ed\'eration Denis-Poisson, Universit\'e d'Orl\'eans
(France). \newline \vspace{0.1cm}  $\quad$  MSC 2000  . \newline \vspace{0.5cm} \textit{Key words: Galton Watson, Dynamical System. } }
}}
\begin{document}

\maketitle

\begin{abstract}
We extend some results of Itai Benjamini and Yuri Lima (see \href{http://arxiv.org/pdf/1305.2610.pdf}{\cite{Benjamini}}). In this paper they consider a binary tree $\mathbb T_n$ of height $n$, each leaf is either infected by one of $k$ diseases or not infected at all. In other words, $x$ at generation $n$ is infected by the $i$-th infection with probability $p_i$ and sane with $p_{k+1}$. Moreover  the infections are independently distributed for each leaf.\\
Infections spread along the tree based on specific rules. In their paper they study the limit distribution of the root of $\mathbb T_n$ as $n$ goes to infinity. \\
Here we want to study the more general case of a Galton-Watson tree and a $z$-ary tree.  
\end{abstract}

\section{Introduction}
First we recall the definition of a Galton Watson tree (GW) and give a few notations.
Assume that $N$ is a $\mathbb N$-valued random variable following a distribution $q$, in other words $\p(N=i)=q_i$ for $i\in\mathbb N$ and to have an interesting problem, we assume that $q_0+q_1=0$ (B\"otcher case).\\ 
Let $\phi$ be the root of the tree and $ N_\phi$ an independent copy of  $ N$. Then, we draw $N_\phi$ children of $\phi$: these individuals are the first generation. In the following we write $N$ for $N_\phi$ for typographical simplicity. At the $n$-th generation, for each individual $x$ we pick $ N_x$ an independent copy of $ N$ where $N_x$ is the number of children of $x$ and so on. The set $\mathbb T$, consisting of the root and its descendants, forms a GW of offspring distribution $q$.\\
We denote by $|x|$ the generation of $x$ and for $n\in\mathbb N$, $\mathbb T_n=\lbrace x\in \mathbb T,|x|\leq n\rbrace$ the GW cut at height $n$ and the leaves of $\mathbb T_n$ are the elements of $\mathbb T_n\backslash \mathbb T_{n-1}$.  \\
According to Neveu's notation (\cite{Neveu}), to each vertex $x$ at generation $m\in\mathbb N$, we associate a sequence $x_1\dots x_m $ where $x_i\in \mathbb N$ and to simplify we write $x=x_1\dots x_m $.\\
This sequence gives the complete ``genealogy'' of $x$:  if $y=x_1\dots x_i$ with $|y|=i<m$, $y$ is the ancestor of $x$ at generation $i$ and we write $y<x$.\\
Note that a $z$-ary tree is just a particular case of a GW with $q_z=1$.\\
In \cite{Benjamini}, Benjamin and Lima consider the spread of an infection in a binary tree denoted $\mathbb T_n$ of height $n$. More precisely, they consider a probability vector $\bp=(\bp_1,\dots, \bp_{k+1})\in\mathbb (\mathbb R_+^*)^{k+1}$ satisfying 
\begin{equation}
\sum_{i=1}^{k+1}\bp_{i}=1
\end{equation}
and each of the nodes of $\mathbb T_n$ is infected or not by one of the disease $\lbrace1,\dots,k\rbrace$ with the following rules 
\begin{itemize}
\item Independently of the others, each leaf is infected according to $\bp$
\[\p(\mbox{leaf is infected by $i$})=\bp_i, \,\p(\mbox{the leaf is not infected})=\bp_{k+1}.\]
\item Nodes at generation $n-1$ are infected this way:
\begin{enumerate}
\item[(R1)] if both children have the same state (infected or not), the ancestor is infected (or not) by it;
\item[(R2)] if both children are infected by different diseases, the ancestor is not infected;
\item[(R3)] if only one of the children is infected, the ancestor is infected by it.  
\end{enumerate}
\item This step is repeated for level $n-2$ and so on.
\end{itemize}
One of their results is the asymptotic behavior of $\bp(n)$, the distribution of the state of the root of $\mathbb T_n$, i.e. the asymptotic behavior of 
\[\forall\, 1\leq i\leq k\  \p(\mbox{root is infected by }i)=\bp_i(n),\,\p(\mbox{root is not infected })=\bp_{k+1}(n). \]
They obtain the following result  ($0_{i}$ denotes $i$ successive zeros)
\begin{The}
Assume that $\bp_1\geq\bp_2\geq\dots\geq\bp_k$.
\begin{enumerate}
\item If $\bp_1=\dots=\bp_k$, then $\bp(n)$ converges to $\left(\frac{1}{2k-1}, \dots,\frac{1}{2k-1},\frac{k-1}{2k-1}\right)$.
\item If $\bp_1=\dots=\bp_i>\bp_{i+1}$ for some $i\in\llbracket1,k-1\rrbracket$, then $\bp(n)$ converges to \\
$\left(\frac{1}{2i-1},\dots,\frac{1}{2i-1},0_{k-i},\frac{i-1}{2i-1}\right)$, where the entry $\frac{1}{2i-1}$ repeats $i$ times. 
\end{enumerate}
\end{The}
The aim of the present paper is to extend, when it can be, the previous results in the case of a $z$-ary tree for $z>2$ and in a very specific case for a GW.
Consider the family of probabilities $\mathscr P_k$ define by
$$\mathscr P_k:=\left\lbrace \mathbf{p}=(\mathbf{p}_1,\dots,\mathbf{p}_{k+1})\in(\mathbb R_+^*)^{k+1} : \sum_{i=1}^{k+1}\mathbf{p}_i=1 \right\rbrace $$
and $(X_j)_{j\geq1}$ i.i.d. random vectors $\lbrace0,1\rbrace^{k}$-valued such that:
$$\p(X_{j}=e_i)=\mathbf{p}_i, \p\left(X_{j}=\1\right)=\mathbf{p}_{k+1} $$
where $(e_i)_{i=1}^{k}$ are the canonical vectors of $\mathbb R^{k}$ and $\1=\sum_{i=1}^ke_i$.\\ 
First we have to give the spread rules for the $k$ diseases in a GW $\mathbb T_n$: 
\begin{itemize}
\item Initially each leaf $x$ of $\mathbb T_n$ is associated to a random variable $X_x$ and is infected as follows: 
\[\p(\mbox{$x$ is infected by $i$})=\p(X_{x}=e_i)=\mathbf{p}_i,
\p(\mbox{$x$ is not infected})=\p(X_{x}=\1)=\mathbf{p}_{k+1}.\]
(Consequently, each leaf is infected i.i.d. according to $\bp$.)
\item Nodes at generation $n-1$ are infected this way:
\begin{enumerate}
\item[(R1')] if all the children have the same state (infected or not) the ancestor is infected (or not) by it;
\item[(R2')] if two children are infected with different diseases, the ancestor is not infected;
\item[(R3')] if some children are infected by a single disease and the others are not infected, the ancestor is infected by it.  
\end{enumerate}
It can be express this way:
$$\mbox{for $|x|=n-1$, }{X}_x=\left\lbrace\begin{array}{cl}
\bigotimes_{i=1}^{N_x}X_{xi}&\mbox{ if ${\Vert \bigotimes_{i=1}^{N_x}X_{xi}\Vert=1} $}\\
\1&\mbox{otherwise,}
\end{array}\right. $$
where $\otimes:\mathbb R^k\times \mathbb R^k\rightarrow\mathbb R^k, (x,\,y)\mapsto x\otimes y=(x_1y_1,x_2y_2,\dots,x_ky_k)$.
\item We repeat this step for level $n-2$ and so on.
\end{itemize}
As claimed, we want to determine $\bp(n)$ the distribution of the state of the root and its asymptotic behavior, in other words the law of $X_\phi$ (or $X_0$) and in the case of $\mathbb T_n$:
$$\forall i\in\llbracket 1,k\rrbracket, \p(X_0=e_i)=\bp_i(n)\mbox{ and } \p(X_0=\1)=\bp_{k+1}(n).$$
In all of our results we
 can assume without loss of generality that $\bp_1\geq \bp_2\geq\dots\geq\bp_k$. 
\begin{The}\label{first}
\begin{enumerate}
\item For $\bp\in \mathscr P_k$ such that $\bp_1=\dots=\bp_k$, if  $\bp(n)$ converges, it does to $(\bar x,\dots,\bar x,1-k\bar x)$ where $\bar x$ is the unique fixed point in $(0,\nicefrac{1}{k}]$ of:
$$f_k(x):=G_N(1-(k-1)x)-G_N(1-kx)$$ 
where $G_N$ is the generating function of $N$. 
\item For $\bp\in \mathscr P_k$ such that $\bp_1=\dots=\bp_i>\bp_{i+1}\geq\bp_{i+1}\geq\dots\geq \bp_k$, then if $\bp(n)$ converges, it does to $(\bar x,\dots,\bar x,0_{k-i},1-i\bar x)$ where $\bar x$ is the unique fixed point of $f_i$ in $(0,\nicefrac{1}{i}]$.
\item If $i=1$, $\bp(n)$ converges to $(1,0_{k})$.
\end{enumerate}
\end{The}
Note that the third point says that if there is only one major disease, regardless of the law of reproduction of $N$, this disease spreads a.s. to the root (asymptotically).\\ 
In what follows, assume that $N=z$ a.s., in other words we have a $z$-ary tree. 

\begin{The}\label{second}
\begin{enumerate}
\item If $z\in\lbrace3,4, 5\rbrace$, for $\bp\in \mathscr P_k$ such that $\bp_1=\dots=\bp_k$, $\bp(n)$ converges to $(\bar x,\dots,\bar x,1-k\bar x)$ where $\bar x$ is the unique fixed point in $(0,\nicefrac{1}{k}]$ of:
\begin{equation}
f_{z,k}(x):=(1-(k-1)x)^z-(1-kx)^z.
\end{equation}
\item For $z\in\lbrace3,4, 5\rbrace$, for $\bp\in \mathscr P_k$ such that $\bp_1=\dots=\bp_i>\bp_{i+1}\geq\bp_{i+1}\geq\dots\geq \bp_k$, $\bp(n)$ converges to $(\bar x,\dots,\bar x,0_{k-i},1-i\bar x)$ where $\bar x$ is the unique fixed point of $f_{z,i}$ in $(0,\nicefrac{1}{i}]$.
\item If $z= 6$,  for $\bp\in \mathscr P_k$ such that $\bp_1=\dots=\bp_k$, $\bar x$ is a repelling point of $f_{z,k}$.
\end{enumerate}
\end{The}

In section \ref{s_z6_i2}, we study completely the case $z=6$ and $i=2$, where $\bp(n)$ does not converge anymore:
\begin{The}\label{concl}
For $z=6$ and $i=2$, denote $\theta:=\lbrace\bar x_\ell, \bar x_r\rbrace$ where $\bar x_\ell$ and $\bar x_r$ are the fixed points of $f_{6,2}^2=f_{6,2}\circ f_{6,2}$ distinct of $\bar x$. Then  for almost all $\bp\in\mathscr P_k$ such that $\bp_1=\bp_2>\bp_3\geq\dots\geq\bp_k$ 
\[\lim_{n\rightarrow\infty} \bp(2n)=(x,x,0_{k-2},1-2x) \]
where $x\in\theta$.
\end{The}
\begin{Rem}
This result is not limited to the case $z=6$ and $i=2$. Indeed, denoting $\hat x_{z,i}=\mathrm{argmax}_{[0,\nicefrac{1}{i}]}f_{z,i}$, the only conditions a case $(z,i)$ has to satisfy are~:
\begin{enumerate}
\item
	$\bar{x}_{z,i}$ is such that $\partial_x f_{z,i}(\bar{x}_{z,i}) < -1$.
\item
	$f_{z,i}(\hat{x}_{z,i}) > \hat{x}_{z,i}$.
\item
	$f_{z,i}(\nicefrac{1}{i}) < \hat{x}_{z,i}$.
\end{enumerate}
\end{Rem}
The paper is structured as follows: Section 2 gives the discrete dynamical system whose study leads to our main Theorems. The same section also gives results for the GW case. Section 3 focuses on $z$-ary tree and concludes the proof of Theorem \ref{second}. Section 4 is devoted to the proof of Theorem \ref{concl}. Finally, Section 5 gives some ideas for extensions of this work. 

\section{General results for a Galton-Watson tree}
A major part of this work consists in the study of discrete dynamical systems: given a function $f$ and a value $x$, we study the behavior of the sequence $f^n(x)$.\\
In this section, we give the studied function $f$ and some global results linked to our problem. \\
To find a recursion formula, assuming that $\phi$ is the root of $\mathbb T_{n+1}$, its children $\phi i$ are root nodes of $N$ independent GW of height $n$. Then the distribution $\bp(n+1)$ of $X_0$, the state of $\phi$, is completely determined by the distribution of $(X_i)_{i=1}^N$, the independent states of its children with distribution  $\bp(n)$. 
\begin{lem}\label{rec}
For all $1\leq i\leq k+1$ and $n\geq1$:
\begin{equation}
\bp_{i}(n+1)=\left\lbrace\begin{array}{cl}
G_{N}(\bp_{k+1}(n)+\bp_i(n))-G_{N}(\bp_{k+1}(n)),&\mbox{ for $i\neq k+1$}\\
1-\sum_{j=1}^k\bp_{i}(n+1),&\mbox{ otherwise.}
\end{array}\right.
\end{equation} 
\end{lem}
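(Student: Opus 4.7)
The proof is a direct computation using the branching structure of the tree and the spreading rules stated just above the lemma. The plan is the following.

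First, I would condition on $N=N_\phi$, the number of children of the root $\phi$ of $\mathbb T_{n+1}$. Each of these $N$ children is itself the root of an independent copy of a Galton-Watson tree of height $n$, so their states $X_{\phi 1},\dots,X_{\phi N}$ are i.i.d.\ with distribution $\bp(n)$, independently of $N$.

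Next I would translate the spreading rules (R1'), (R2'), (R3') into a clean event. Fix $i\in\{1,\dots,k\}$. By inspection of the three rules, $X_\phi=e_i$ occurs if and only if every child is either in state $e_i$ or in state $\1$, and at least one child is actually in state $e_i$ (otherwise all children would be $\1$ and (R1') would give $X_\phi=\1$). Conditional on $N$, independence of the children gives
\[
\p(X_\phi=e_i\mid N)=\bigl(\bp_i(n)+\bp_{k+1}(n)\bigr)^{N}-\bp_{k+1}(n)^{N},
\]
the first term being $\p(\text{all children in }\{e_i,\1\}\mid N)$ and the second $\p(\text{all children equal }\1\mid N)$. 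Averaging over $N$ and recognizing the generating function $G_N(s)=\E[s^N]$ yields
\[
\bp_i(n+1)=G_N\bigl(\bp_i(n)+\bp_{k+1}(n)\bigr)-G_N\bigl(\bp_{k+1}(n)\bigr),
\]
which is the first case of the lemma.

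Finally, the case $i=k+1$ follows by normalization: since $X_\phi$ takes values in $\{e_1,\dots,e_k,\1\}$, the coordinates of $\bp(n+1)$ sum to one, giving $\bp_{k+1}(n+1)=1-\sum_{j=1}^k\bp_j(n+1)$.

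There is no real obstacle here: the only thing to be careful about is the characterization of the event $\{X_\phi=e_i\}$ from the three rules, in particular noticing that (R1') with all children equal to $\1$ must be excluded from the event $\{\text{all children in }\{e_i,\1\}\}$, which is what produces the subtraction $-G_N(\bp_{k+1}(n))$. Everything else is a one-line use of independence and of the definition of $G_N$.
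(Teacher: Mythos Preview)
Your proof is correct and follows essentially the same approach as the paper: both condition on $N$, identify $\{X_\phi=e_i\}$ with the event that all children lie in $\{e_i,\1\}$ with at least one equal to $e_i$, and then average over $N$ via the generating function. The only cosmetic difference is that the paper writes out the intermediate binomial sum $\sum_{\ell=0}^{z-1}\binom{z}{\ell}\bp_{k+1}(n)^\ell\bp_i(n)^{z-\ell}$ before collapsing it to $(\bp_{k+1}(n)+\bp_i(n))^z-\bp_{k+1}(n)^z$, whereas you obtain this difference directly by subtracting the all-$\1$ probability.
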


\begin{proof}
To simplify our proof denote by $ S_z:= \lbrace 1\leq i\leq z,\, X_i=\1\rbrace$, the non infected sites in a $z$-sized population.
According to (R1')-(R3'):
\begin{align*}
\bp_i(n+1)&=\p\left(\bigotimes_{j=1}^NX_{j}=e_i\right)=\sum_{z=2}^\infty q_z\p\left(\bigotimes_{j=1}^zX_j=e_i\right)=\sum_{z=2}^\infty q_z\sum_{\ell=0}^{z-1}\p\left(|S_z|=\ell, \bigotimes_{i=1}^zX_i=e_i    \right)\\
&=\sum_{z=2}^\infty q_z\sum_{\ell=0}^{z-1}C_{z}^{\ell}\bp_{k+1}^\ell(n) \bp_{i}^{z-\ell}(n)=\sum_{z=2}^\infty q_z((\bp_{k+1}(n)+\bp_i(n))^z-\bp_{k+1}^{z}(n))\\
&=G_{N}(\bp_{k+1}(n)+\bp_i(n))-G_{N}(\bp_{k+1}(n)).\hfill\finpreuvem
\end{align*}
\end{proof}
If we define $F:\mathbb R^{k+1}\rightarrow\mathbb R^{k+1}$ by :
\begin{equation*}
F_{i}(x)= \left\lbrace\begin{array}{ll}
G_{N}(x_{k+1}+x_i)-G_{N}(x_{k+1}),&\mbox{ for $1\leq i\leq k$}\\
1-\sum_{i=1}^kF_{i}(x_1,\dots,x_{k+1}),&\mbox{ otherwise.}\end{array}\right.
\end{equation*}
we see that $\bp(n)=F^n(\bp)$.\\
In fact our problem consists in the study of the fixed points of the function $F$.  
Like in \cite{Benjamini}, we first consider the uniform case assuming that $\bp_1=\dots=\bp_k$. Obviously for all $n\geq1, \,\bp_1(n)=\dots=\bp_k(n)$ implying that we just have to study:
\[\bp_1(n+1)=G_{N}(1-(k-1)\bp_1(n))-G_{N}(1-k\bp_1(n)).\]
For this purpose, define $f_k:(0,\nicefrac{1}{k}]\rightarrow(0,\nicefrac{1}{k}]$ by
\begin{equation}
f_k(x)=G_{N}(1-(k-1)x)-G_{N}(1-kx).
\end{equation}
We obtain the following
\begin{lem}\label{fixed}
$f_k$ admits a unique fixed point in $(0,\nicefrac{1}{k}]$.
\end{lem}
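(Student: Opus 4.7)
The plan is to reduce the fixed-point problem on $(0,\nicefrac{1}{k}]$ to a level set of a monotone function. Set
\[ h(x) := \frac{f_k(x)}{x}, \qquad x \in (0,\nicefrac{1}{k}], \]
so that $x$ is a fixed point of $f_k$ if and only if $h(x) = 1$. I will argue that $h$ extends continuously to $[0,\nicefrac{1}{k}]$, is strictly decreasing, and satisfies $h(0^+) > 1 \geq h(\nicefrac{1}{k})$; existence and uniqueness will then follow at once from the intermediate value theorem applied to $h - 1$.

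For the endpoint values, l'H\^opital (or direct differentiation of $f_k$ at $0$) yields $h(0^+) = f_k'(0) = -(k-1)G_N'(1) + kG_N'(1) = G_N'(1) = \E[N]$. The B\"otcher hypothesis $q_0 + q_1 = 0$ forces $N \geq 2$ almost surely, hence $\E[N] \geq 2 > 1$. At the other endpoint, $f_k(\nicefrac{1}{k}) = G_N(\nicefrac{1}{k}) - G_N(0) = G_N(\nicefrac{1}{k})$ since $q_0 = 0$; because only coefficients $q_i$ with $i \geq 2$ appear in the power series, $G_N(\nicefrac{1}{k}) \leq \nicefrac{1}{k^2}$, whence $h(\nicefrac{1}{k}) \leq \nicefrac{1}{k} \leq 1$.

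The heart of the proof will be strict monotonicity of $h$. Using the integral representation $f_k(x) = \int_{1-kx}^{1-(k-1)x} G_N'(t)\,dt$ and writing $u := 1-(k-1)x$, $v := 1-kx$ so that $u - v = x$, the inequality $h'(x) < 0$ is equivalent to
\[ x\bigl[kG_N'(v) - (k-1)G_N'(u)\bigr] \;<\; \int_v^u G_N'(t)\,dt. \]
I plan to sandwich both sides around the common quantity $xG_N'(v)$. The left-hand side rewrites as $xG_N'(v) - (k-1)x\bigl[G_N'(u) - G_N'(v)\bigr]$, which is at most $xG_N'(v)$ because $G_N'$ is non-decreasing; the right-hand side strictly exceeds $(u-v)G_N'(v) = xG_N'(v)$ because $G_N'$ is \emph{strictly} increasing on $[v,u]$.

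The main obstacle I anticipate is securing strictness in this last step, which rests on the strict convexity of $G_N$ on $(0,1)$. It reduces to the observation that $G_N''(t) = \sum_{i \geq 2} i(i-1) q_i t^{i-2} > 0$ for $t>0$, which holds because $q_0 + q_1 = 0$ together with $\sum_i q_i = 1$ forces at least one $q_i$ with $i \geq 2$ to be positive. Once $h$ is known to be continuous on $[0,\nicefrac{1}{k}]$ and strictly decreasing across the level $1$, uniqueness is immediate. The degenerate case $k = 1$ deserves a side remark: there $h(1) = 1$, so the fixed point lies at the endpoint $\nicefrac{1}{k}$, but strict monotonicity of $h$ still guarantees it is the only one.
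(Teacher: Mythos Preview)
Your proof is correct and, at the conceptual level, identical to the paper's: both reduce the problem to showing that $h(x)=f_k(x)/x$ is strictly decreasing on $(0,\nicefrac{1}{k}]$ with $h(0^+)>1\geq h(\nicefrac{1}{k})$. The difference is purely in how strict monotonicity of $h$ is obtained. The paper writes out $h$ explicitly via the factorisation $a^z-b^z=(a-b)\sum_{j=0}^{z-1}a^{z-1-j}b^j$, obtaining
\[
h(x)=\sum_{z\geq 2}q_z\sum_{j=0}^{z-1}(1-(k-1)x)^{z-1-j}(1-kx)^j,
\]
and then observes that each summand is strictly decreasing in $x$. You instead use the integral form $f_k(x)=\int_{1-kx}^{1-(k-1)x}G_N'(t)\,dt$ and the strict convexity of $G_N$ on $(0,1)$ to sandwich $xf_k'(x)$ below $f_k(x)$. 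The paper's route is slightly more elementary (no derivatives of $h$, just monotonicity of polynomials in $x$), while yours is cleaner in that it never unpacks the power series and would work verbatim for any strictly convex $G_N$; in either case the same function $h$ does all the work.
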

\begin{figure}[h!]
\caption{$k=4$, $q_3=q_6=q_{10}=\frac{1}{3}$} \label{fe}
\begin{center}
\includegraphics[width=8cm]{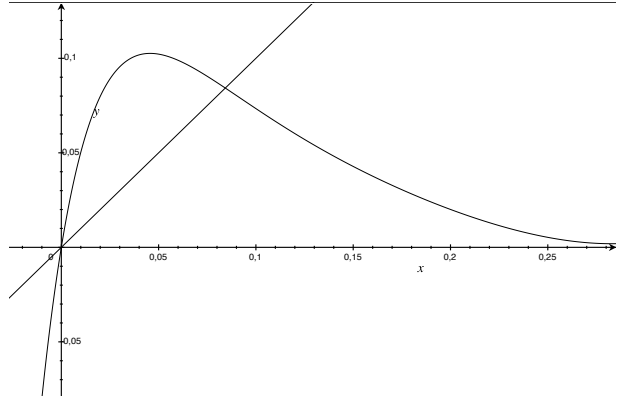}
\end{center}
\end{figure}

\begin{proof}
Properties of generating functions ensure that $f_k\in\mathcal C^\infty((0,\nicefrac{1}{k}])$ and on this interval:
\begin{align*}
f_k^{\prime}(x)&=kG_{N}^\prime(1-kx)-(k-1)G_{N}^\prime(1-(k-1)x)\\
&=k\E[N(1-kx)^{N-1}]-(k-1)\E[N(1-(k-1)x)^{N-1}].
\end{align*}
As $\lim_{x\rightarrow0^+}f_k^\prime(x)=\E[N]\geq 2$, $f_k(0) = 0$, and 
\[f_k(\nicefrac{1}{k})=G_N(\nicefrac{1}{k})-G_N(0)=\sum_{k\geq2}q_zk^{-z}<\nicefrac{1}{k},\]
$f_k$ admits at least one fixed point  $\bar{x}_k\in(0,\nicefrac{1}{k}]$. 
For $x\in(0,\nicefrac{1}{k}]$:
\begin{eqnarray}
f_k(x)=x&\Leftrightarrow&\sum_{z\geq2}q_z\left((1-(k-1)x)^z-(1-kx)^z\right)=x\nonumber\\
&\Leftrightarrow&\sum_{z\geq 2}q_zx\sum_{j=0}^{z-1}(1-(k-1)x)^{z-1-j}(1-kx)^j=x\nonumber\\
&\Leftrightarrow&\sum_{z\geq 2}q_z\sum_{j=0}^{z-1}(1-(k-1)x)^{z-1-j}(1-kx)^j=1\label{fastoche}.
\end{eqnarray}
$x\mapsto \sum_{z\geq 2}q_z\sum_{j=0}^{z-1}(1-(k-1)x)^{z-1-j}(1-kx)^j$ is strictly decreasing on $\left(0,\nicefrac{1}{k}\right]$ and thus bijective. As a result $\eqref{fastoche}$ has at most one solution on $\left(0,\nicefrac{1}{k}\right]$ implying the uniqueness of $\bar x_k$.
\end{proof}\hfill$\finpreuvem$

\begin{lem}\label{zeroplus}
There exists $\eta>0$ such that 
\[\forall n\in \mathbb N, \bp_1(n)\geq \alpha:=\min \lbrace G_N(\eta),\bp_1\rbrace\]
\end{lem}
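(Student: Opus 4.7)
The plan is to prove a uniform-in-$n$ lower bound on $u_n := \bp_1(n)$ for the one-dimensional dynamics $u_{n+1} = f_k(u_n)$ on $(0,\nicefrac{1}{k}]$. The strategy is to combine two pointwise lower bounds on $f_k$, one useful for $x$ bounded away from $0$ and one for $x$ small, and then close the argument with a short induction.

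The central ingredient is the inequality
\[
f_k(x) \;\ge\; G_N(x), \qquad x \in [0,\nicefrac{1}{k}].
\]
To see this, write $y = 1 - kx \in [0,1]$, so that $f_k(x) = G_N(y+x) - G_N(y)$. The generating function $G_N(s)=\sum_{z\ge 2}q_z s^z$ is convex on $[0,1]$ because $G_N''(s)=\E[N(N-1)s^{N-2}]\ge 0$, and for a convex function the increment $a \mapsto G_N(a+x) - G_N(a)$ is non-decreasing in $a$. Applying this from $a = 0$ to $a = y \ge 0$ yields $f_k(x) \ge G_N(x) - G_N(0) = G_N(x)$. This bound is sharp enough whenever $u_n$ is bounded away from $0$.

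The complementary ingredient handles small $x$. The Bötcher hypothesis $q_0 + q_1 = 0$ forces $\E[N] \ge 2$, and a direct computation gives $f_k(0) = 0$ with $f_k'(0) = \E[N] \ge 2 > 1$. By continuity there exists $\eta \in (0,\nicefrac{1}{k}]$ such that $f_k(x) \ge x$ on $[0,\eta]$. Fix such an $\eta$, and set $\alpha := \min\{G_N(\eta), \bp_1\}$, which is positive because $q_z > 0$ for at least one $z \ge 2$. I claim $u_n \ge \alpha$ for all $n$, by induction. The base case $u_0 = \bp_1 \ge \alpha$ is tautological. For the inductive step, assume $u_n \ge \alpha$ and split on the location of $u_n$: if $u_n \le \eta$, then $u_{n+1} = f_k(u_n) \ge u_n \ge \alpha$ by the small-$x$ bound; if $u_n > \eta$, then $u_{n+1} = f_k(u_n) \ge G_N(u_n) \ge G_N(\eta) \ge \alpha$ by the convexity bound together with monotonicity of $G_N$.

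I do not foresee a real obstacle; the whole argument reduces to the convexity inequality $f_k \ge G_N$, which is a one-line consequence of $G_N'' \ge 0$, and to the expansiveness of $f_k$ at the origin supplied by the Bötcher assumption. The only point requiring mild care is to take $\eta \le \nicefrac{1}{k}$ so that the two regimes $\{u_n \le \eta\}$ and $\{u_n > \eta\}$ together cover the dynamical interval $[0,\nicefrac{1}{k}]$.
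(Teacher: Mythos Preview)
Your argument is clean but it treats only the uniform case $\bp_1=\cdots=\bp_k$, where the recursion is genuinely one-dimensional, $u_{n+1}=f_k(u_n)$. The lemma, however, is stated and used in the paper for the general (non-uniform) setting: it is invoked in the proof of Lemma~\ref{zero} and in Proposition~\ref{nonunif_CV}, both of which concern $\bp_1=\cdots=\bp_i>\bp_{i+1}$, and the paper's own proof is written with the extra variable $y=\sum_{j>i}\bp_j(n)$ precisely to cover this. In that regime $\bp_1(n+1)$ depends on all the $\bp_j(n)$, not just on $\bp_1(n)$, so your opening reduction to the scalar map $f_k$ is not justified and there is a real gap in scope.

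That said, your convexity idea closes the gap with one extra line. Since each $\bp_j(n+1)$ is increasing in $\bp_j(n)$, the ordering $\bp_1(n)\ge\bp_j(n)$ is preserved, whence $\bp_{k+1}(n)=1-\sum_{j\le k}\bp_j(n)\ge 1-k\bp_1(n)$. Your observation that $a\mapsto G_N(a+x)-G_N(a)$ is non-decreasing then yields
\[
\bp_1(n+1)=G_N\bigl(\bp_{k+1}(n)+\bp_1(n)\bigr)-G_N\bigl(\bp_{k+1}(n)\bigr)\ \ge\ f_k\bigl(\bp_1(n)\bigr),
\]
after which your two-case induction runs verbatim with $u_{n+1}\ge f_k(u_n)$ in place of equality. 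This is in fact tidier than the paper's route, which handles small $x$ via a first-order Taylor expansion of $G_N(1-\cdot)$ with explicit remainder control and handles large $x$ by taking an infimum over $y$; your single convexity inequality $G_N(a+x)-G_N(a)\ge G_N(x)$ subsumes the latter directly and, combined with $f_k'(0)=\E[N]\ge 2$, gives the former without any $\varepsilon$--$\eta'$ bookkeeping.
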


\begin{proof}
Writing $G_N(1-x)=G_N(1)-xG_N^\prime(1)+\varepsilon(x)$ where $\nicefrac{\varepsilon(x)}{x}\underset{x\rightarrow0}{\longrightarrow}0$, there exists $\eta^\prime>0$ small enough such that if $0<x\leq \eta^\prime$ then $|\varepsilon(x)|\leq \nicefrac{x}{2k}$. Then for $x\leq \nicefrac{\eta^\prime}{k}=:\eta$ and $0<y<(k-i)x$
\begin{eqnarray*}
G_N(1-(i-1)x-y)-G_N(1-ix-y)&=&xG_N^\prime(1)+\varepsilon((i-1)x+y)-\varepsilon(ix+y)\\
&\geq&xG_N^\prime(1)-\frac{ix+y}{k}\geq G_N^\prime(1)-x\geq 2x-x=x,
\end{eqnarray*}
as $G_N^\prime(1)=\E[N]\geq2$.\\
If $x>\eta$, using the fact $G_N(1-(i-1)x-y)-G_N(1-ix-y)$ is decreasing in $y$ 
\begin{eqnarray*}
\inf_{x\in[\eta,\nicefrac{1}{k}]}\inf_{y\in[0,(1-ix)\wedge (k-i)x]}G_N(1-(i-1)x-y)-G_N(1-ix-y)\\
\geq \inf_{x\in[\eta,\nicefrac{1}{k}]}\inf_{y\in[0,1-ix]}G_N(1-(i-1)x-y)-G_N(1-ix-y)=\inf_{x\in[\eta,\nicefrac{1}{k}]}G_N(x)=G_N(\eta)>0.
\end{eqnarray*}
Consequently an obvious recurrence gives $\bp_1(n)\geq \alpha=\min\lbrace G_N(\eta), \bp_1\rbrace$ for all $n\geq0$.
\end{proof}\hfill$\finpreuvem$\\
\begin{Rem}
The two previous lemmas induce that, in the uniform case, if $\bp(n)$ converges, it does to $(\bar x_k,\dots,\bar x_k,1-k\bar x_k)$, which is the first point of Theorem \ref{first} . 
\end{Rem}

The following lemma ensures that the ``minor" diseases can not spread to the root asymptotically and thus the second part or Theorem \ref{first}:  
\begin{lem}\label{zero}
In the non uniform case with $\bp_1=\dots=\bp_i>\bp_{i+1}$, for all $j>i$, $\bp_j(n)\underset{n\rightarrow\infty}{\rightarrow}0$. 
\end{lem}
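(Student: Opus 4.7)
The plan is to study the ratio $r_n := \bp_j(n)/\bp_1(n)$ for $j > i$, show it is strictly decreasing and converges to $0$, and conclude via $\bp_1(n) \leq 1$ that $\bp_j(n) \to 0$.

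The first observation is that $\bp_j(n) < \bp_1(n)$ for every $n$, which I would prove by induction: the base case follows from the standing assumption $\bp_j \leq \bp_{i+1} < \bp_1$, and the inductive step uses the strict monotonicity of $G_N$ applied to the recursion of Lemma \ref{rec}. Writing $c_n = \bp_{k+1}(n)$, $a_n = \bp_j(n)$, $b_n = \bp_1(n)$, the inequality $r_{n+1} < r_n$ is equivalent, after rearranging Lemma \ref{rec} and using that $c_n + a_n = \tfrac{a_n}{b_n}(c_n+b_n) + (1-\tfrac{a_n}{b_n})c_n$, to
\[
G_N\!\left(\tfrac{a_n}{b_n}(c_n + b_n) + \bigl(1 - \tfrac{a_n}{b_n}\bigr) c_n\right) < \tfrac{a_n}{b_n}\, G_N(c_n + b_n) + \bigl(1 - \tfrac{a_n}{b_n}\bigr) G_N(c_n),
\]
which is exactly the strict convexity of $G_N$ between $c_n$ and $c_n + b_n$. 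The B\"ottcher assumption $q_0 + q_1 = 0$ yields $G_N''(x) = \E[N(N-1)x^{N-2}] > 0$ for every $x > 0$, and Lemma \ref{zeroplus} ensures $b_n \geq \alpha > 0$, so this convexity argument applies on the relevant interval.

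For the convergence $r_n \to 0$ I would use a compactness-and-continuity argument. Since $(r_n)$ is decreasing and takes values in $[0, r_0]$ with $r_0 = \bp_j/\bp_1 < 1$, it converges to some $r^* \in [0, 1)$. Assume $r^* > 0$ for contradiction, and extract a convergent subsequence $\bp(n_m) \to \bp^* \in [0,1]^{k+1}$. Lemma \ref{zeroplus} forces $\bp^*_1 \geq \alpha > 0$, so $\bp^*_j = r^* \bp^*_1 \in (0, \bp^*_1)$ (the strict upper bound holds because $r^* \leq r_0 < 1$). Continuity of $F$ gives $\bp(n_m + 1) \to F(\bp^*)$, hence $F(\bp^*)_j/F(\bp^*)_1 = r^*$; but re-applying the strict convexity step at the point $\bp^*$ yields $F(\bp^*)_j/F(\bp^*)_1 < \bp^*_j/\bp^*_1 = r^*$, a contradiction. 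Therefore $r^* = 0$ and $\bp_j(n) = r_n \bp_1(n) \leq r_n \to 0$.

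The only delicate point I anticipate is the strict convexity step when $c_n$ happens to be very close to $0$: if $q_2 = 0$ then $G_N''(0) = 0$, but the uniform lower bound $b_n \geq \alpha$ ensures that the chord $[c_n, c_n + b_n]$ always reaches a region where $G_N'' > 0$, which is enough for the strict convexity conclusion to hold. No finer quantitative rate on the decrease of $r_n$ is needed, because the monotonicity together with continuity and the compactness of $[0,1]^{k+1}$ already suffice.
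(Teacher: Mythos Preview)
Your proof is correct and follows essentially the same strategy as the paper: both study the ratio $\bp_j(n)/\bp_1(n)$, show it is nonincreasing, and then reach a contradiction via a subsequential limit using Lemma~\ref{zeroplus}. The only cosmetic difference is that the paper obtains the key inequality by the explicit factorization $(c+a)^z-c^z = a\sum_{j=0}^{z-1}(c+a)^{z-1-j}c^j$, whereas you phrase the same fact as strict convexity of $G_N$ (equivalently, monotonicity of secant slopes).
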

\begin{proof}
Note that we just have to prove that $\lim_{n\rightarrow+\infty}\bp_{i+1}(n)=0$. Writing $w_n=\frac{\bp_{i+1}(n)}{\bp_1(n)}$:
\begin{equation}\label{dec}
\forall n\geq 0,\,w_{n+1}=w_n\frac{\sum_{z\geq2}q_z\sum_{j=0}^{z-1}(\bp_{i+1}(n)+\bp_{k+1}(n))^{z-1-j} (\bp_{k+1}(n))^j }{\sum_{z\geq2}q_z\sum_{j=0}^{z-1}(\bp_{1}(n)+\bp_{k+1}(n))^{z-1-j} (\bp_{k+1}(n))^j}\leq w_n
\end{equation}
as $\bp_1(n)\geq \bp_{i+1}(n)$. Thus, $(w_n)$ is a positive non increasing sequence, and consequently converges. Denote by $\ell=\lim_{n\rightarrow+\infty}w_n$ and as $w_0<1$, note that $\ell<1$.  \\
We can  find a subsequence ${n_m}$ such that $\lim_{m\rightarrow+\infty}\bp_{j}(n_m)=a_j$ for all $j\leq k+1$. From lemma \ref{zeroplus}, we have $a_1>0$. Now, assume that $a_{i+1}>0$. Since $\ell<1$, we have $a_1>a_{i+1}>0$. Using \eqref{dec}
\begin{eqnarray*}
\ell=\lim_{m\rightarrow+\infty}w_{n_m+1}&=&\lim_{m\rightarrow+\infty}w_{n_m}\frac{\sum_{z\geq2}q_z\sum_{j=0}^{z-1}(\bp_{i+1}(n_m)+\bp_{k+1}(n_m))^{z-1-j} (\bp_{k+1}(n_m))^j }{\sum_{z\geq2}q_z\sum_{j=0}^{z-1}(\bp_{1}(n_m)+\bp_{k+1}(n_m))^{z-1-j} (\bp_{k+1}(n_m))^j}\\
&=&\ell \frac{\sum_{z\geq2}q_z\sum_{j=0}^{z-1}(a_{i+1}+a_{k+1})^{z-1-j} (a_{k+1})^j }{\sum_{z\geq2}q_z\sum_{j=0}^{z-1}(a_{1}+a_{k+1})^{z-1-j} (a_{k+1})^j}<\ell
\end{eqnarray*}
 which is a contradiction. Then $a_{i+1}=0$ and consequently $\bp_{i+1}(n)\underset{n\rightarrow\infty}{\rightarrow}0$. \hfill$\finpreuvem$
\end{proof}

The two previous lemmas have an important role in the following sections but we can also prove easily the point 3 of Theorem \ref{first}. According to lemma
\ref{zeroplus} $\liminf_{n\rightarrow\infty}\bp_1(n)>0$ and from lemma \ref{zero}, $\lim_{n\rightarrow\infty}\bp_{j}(n)=0$ for all $j>1$, then:
 \begin{eqnarray*}
\bp_{1}(n+1)&=&G_N\left(1-\sum_{j=2}^k\bp_j(n)\right)-G_N\left(1-\sum_{j=2}^k\bp_j(n)-\bp_{1}(n)\right)\\
\liminf_{n\rightarrow\infty}\bp_{1}(n+1)&=&G_N(1)-G_N\left(1-\liminf_{n\rightarrow\infty}\bp_{1}(n)\right)=1-G_N\left(1-\liminf_{n\rightarrow\infty}\bp_{1}(n)\right).
\end{eqnarray*} 
Thus, $\liminf_{n\rightarrow\infty}\bp_{1}(n)$ is a fixed point of $x\mapsto1-G_N(1-x)$ on $(0,1]$. This function being strictly increasing, the solution is 1. As a result $\liminf_{n\rightarrow\infty}\bp_{1}(n)=\lim_{n\rightarrow\infty}\bp_1(n)=1$. 


\section{Results for $z$-ary tree}\label{s_zary}

In this section, we investigate the special case where the GW is a $z$-ary tree for $z \geq 3$ ($z=2$ is the case studied in \cite{Benjamini}). In this case, the function $F:\R^{k+1}\to \R^{k+1}$ is:
\begin{equation*}
F_i(x)  = \left\{\begin{array}{ll}
 \left(1-\sum_{j=1,j\neq i}^k x_j\right)^z - \left(1-\sum_{j=1}^k x_j\right)^z,&\ \text{for } 1 \leq i \leq k\\
  1 - \sum_{i=1}^k F_i(x), &\mbox{ otherwise.}
\end{array}\right.
\end{equation*}

When studying the asymptotic behavior of $\bp(n)$, with $\bp_1 = \dots = \bp_i > \bp_{i+1} \geq \dots \geq \bp_k$, it is enough to study the asymptotic behavior of $\tilde \bp(n) = (\bp_1(n),\bp_{i+1}(n),\dots,\bp_k(n)) \in [0,1]^{k-i+1}$. Indeed, we have $\bp_1(n)=\dots=\bp_i(n)$ and $\bp_{k+1}(n) = 1-\sum_{j=1}^k \bp_j(n)$, and thus, if $\tilde\bp(n)$ tends toward $\bar{\tilde{\bp}}$, then $\bp(n)$ tends toward $\bar{\bp} = (\bar{\tilde{\bp}}_1,\dots,\bar{\tilde{\bp}}_1,\bar{\tilde{\bp}}_2,\dots,\bar{\tilde{\bp}}_{k-i+1},1-i\bar{\tilde{\bp}}_1 - \sum_{j=2}^{k-i+1} \bar{\tilde{\bp}}_j)$.

In the uniform case, where $\bp_1(n) = \dots = \bp_k(n)$, lemma \ref{fixed} applies and states that there exists a unique fixed point $(\bar{x}_{z,k},\dots,\bar{x}_{z,k},1-k\bar{x}_{z,k})$ of $F$. Furthermore, in the uniform case, we can restrict our study to the discrete scalar dynamical system whose dynamics is given by $f_{z,k}(x) = F_j(x,\cdots,x,1-kx)$ for $j = 1,\cdots,k$ and $x \in [0,\nicefrac{1}{k}]$, that is
\begin{equation}
f_{z,k}(x) = (1-(k-1)x)^z-(1-kx)^z
\end{equation}

\begin{Rem}
In this section, some of the proofs use differentiation with respect to the integers $z$ or $k$. This has to understood as a differentiation with respect to a relaxation of $z$ or $k$ in $\mathbb R$.  
\end{Rem}

The following lemma gives a lower and upper bound of $\bar{x}_{z,k}$, the unique fixed point of $f_{z,k}$ in $(0,\nicefrac{1}{k}]$.
\begin{lem}\label{framing}
The unique fixed point $\bar{x}_{z,k}$ of $f_{z,k}$ in $(0,\nicefrac{1}{k}]$ satisfies
$$\tilde{x}_{z,k} < \bar{x}_{z,k} < \tilde{x}_{z,k-1},\, \forall z \geq 2, \forall\, k \geq 2,$$
where 
$$\tilde{x}_{z,k} = \frac{1}{k}\left( 1 - \left( \frac{1}{z}\right)^{\frac{1}{z-1}} \right).$$
\end{lem}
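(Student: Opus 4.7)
The plan is to apply the mean value theorem to the auxiliary function $g(t)=t^z$ on the interval whose endpoints are $1-kx$ and $1-(k-1)x$, which has length exactly $x$. Indeed, for any $x\in(0,\nicefrac{1}{k})$, the MVT yields some $w(x)\in(1-kx,1-(k-1)x)$ with
\[
f_{z,k}(x) = (1-(k-1)x)^z-(1-kx)^z = z\,w(x)^{z-1}\cdot x,
\]
and $w(x)$ lies in the open interval thanks to the strict convexity of $g$ for $z>1$.

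Before using this, I would first observe that $\bar{x}_{z,k}$ must lie strictly in $(0,\nicefrac{1}{k})$: indeed $f_{z,k}(\nicefrac{1}{k}) = (\nicefrac{1}{k})^z < \nicefrac{1}{k}$ for $k,z\geq 2$, so $\nicefrac{1}{k}$ is not a fixed point. In particular $1-k\bar{x}_{z,k}>0$ and the MVT applies at $x=\bar{x}_{z,k}$.

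Plugging the fixed point condition $f_{z,k}(\bar{x}_{z,k})=\bar{x}_{z,k}$ into the MVT identity and dividing by $\bar{x}_{z,k}>0$ gives $z\,w(\bar{x}_{z,k})^{z-1}=1$, so
\[
w(\bar{x}_{z,k}) = z^{-1/(z-1)}.
\]
The strict inclusion $w(\bar{x}_{z,k})\in(1-k\bar{x}_{z,k},\,1-(k-1)\bar{x}_{z,k})$ then reads
\[
1-k\bar{x}_{z,k} < z^{-1/(z-1)} < 1-(k-1)\bar{x}_{z,k},
\]
which, after solving each inequality for $\bar{x}_{z,k}$, yields precisely $\tilde{x}_{z,k}<\bar{x}_{z,k}<\tilde{x}_{z,k-1}$.

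I do not anticipate a significant obstacle: the entire argument hinges on the observation that the endpoint spacing is exactly $x$, which cancels against the $\bar{x}_{z,k}$ on the left-hand side of the fixed point equation, leaving a purely algebraic condition on the MVT intermediate point. The only mild point to make explicit is that the $w$ produced by the MVT is \emph{strictly} interior to the interval (so that both bounds are strict), which is a consequence of $t\mapsto t^z$ having strictly monotone derivative for $z\geq 2$.
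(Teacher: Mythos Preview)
Your argument is correct and is a genuinely different, more elegant route than the paper's. The paper evaluates the defect $\Delta_{z,k}(x)=f_{z,k}(x)-x$ at the two candidate bounds $\tilde{x}_{z,k}$ and $\tilde{x}_{z,k-1}$ and shows it has the right sign; to do so it treats $k$ as a continuous parameter, differentiates $\Delta_{z,k}(\tilde{x}_{z,k})$ and $\Delta_{z,k}(\tilde{x}_{z,k-1})$ with respect to $k$, and combines monotonicity with the limit as $k\to\infty$. Your MVT observation bypasses all of this: writing $f_{z,k}(x)=z\,w(x)^{z-1}x$ turns the fixed-point equation into the single algebraic identity $w(\bar{x}_{z,k})=z^{-1/(z-1)}$, and the desired framing is then nothing more than the trivial inclusion of the MVT point in its interval. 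This is strictly simpler and avoids the somewhat artificial differentiation in the integer $k$. One cosmetic remark: the strict interiority of $w$ already comes for free from the standard statement of the mean value theorem (open interval), so invoking strict convexity is not needed, though it does no harm.
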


\begin{proof}
Recall that according to the proof of lemma \ref{fixed}, $\Delta_{z,k}(x) = f_{z,k}(x)-x$ only has one zero $\bar{x}_{z,k}$ in $(0,\nicefrac{1}{k}]$. This function is positive on $(0,\bar{x}_{z,k})$ and negative on $(\bar{x}_{z,k},\nicefrac{1}{k}]$ (see Figure \ref{TabVarF}).

\paragraph{Lower bound:} We prove that, for all $z \geq 2$, and $k \geq 1$, $\Delta_{z,k}(\tilde{x}_{z,k}) > 0$ which implies that $\bar{x}_{z,k} > \tilde{x}_{z,k}$.  $\Delta_{z,k}(\tilde{x}_{z,k})$ writes as:
\begin{equation*}
\Delta_{z,k}(\tilde{x}_{z,k}) = \left( 1 - \frac{k-1}{k} (1-a_z) \right)^z - a_z^z - \frac{1}{k}(1-a_z),
\end{equation*}
where $a_z = (\nicefrac{1}{z})^{\nicefrac{1}{(z-1)}}$, so that $\tilde{x}_{z,k} = (1-a_z)/k$. Obviously, $\Delta_{z,k}(\tilde{x}_{z,k})$ goes to $0$ as $k$ tends to infinity. Differentiating $\Delta_{z,k}(\tilde{x}_{z,k})$ with respect to $k$:
\begin{eqnarray*}
\frac{d \Delta_{z,k}(\tilde{x}_{z,k})}{dk} & = & z \left( 1 - \frac{k-1}{k} (1-a_z) \right)^{z-1} \left(-\frac{1}{k^2}\right) (1-a_z) + \frac{1}{k^2} (1-a_z)\\
& = & \frac{\tilde{x}_{z,k}}{k} ( 1 - z(1-(k-1)\tilde{x}_{z,k})^{z-1})\\
& < & \frac{\tilde{x}_{z,k}}{k} ( 1 - z(1-k\tilde{x}_{z,k})^{z-1}) = 0
\end{eqnarray*}
where we used the fact that $z(1-k\tilde{x}_{z,k})^{z-1} = 1$. So $\Delta_{z,k}(\tilde x_{z,k})$ is decreasing with $k$ and its limit as $k$ tends towards infinity is zero. It is thus positive for all $k \geq 1$ and $z \geq 2$. This concludes the proof that $\bar{x}_{z,k} > \tilde{x}_{z,k}$.

\paragraph{Upper bound:} Similarly to the lower bound, we prove that  for all $z \geq 2$, and $k \geq 2$, $\Delta_{z,k}(\tilde{x}_{z,k-1})$ $ < 0$ which implies that $\bar{x}_{z,k} < \tilde{x}_{z,k-1}$.
As before, $\Delta_{z,k}(\tilde{x}_{z,k-1})$ goes to $0$ as $k$ tends to infinity.

Differentiating $\Delta_{z,k}(\tilde{x}_{z,k-1})$ with respect to $k$:
\begin{eqnarray*}
\frac{d \Delta_{z,k}(\tilde{x}_{z,k-1})}{dk} & = & -z\left( 1 - \frac{k}{k-1}(1 - a_z)^{z-1}\right) \frac{(1-a_z)}{(k-1)^2} + \frac{(1-a_z)}{(k-1)^2}\\
& = & \frac{(1-a_z)}{(k-1)^2} \left( 1 - z\left(1-\frac{k}{k-1}(1-a_z)\right)^{z-1}\right)\\
& > & \frac{(1-a_z)}{(k-1)^2} ( 1 - z(1-k\tilde{x}_{z,k} )^{z-1}) = 0.
\end{eqnarray*}
So $\Delta_{z,k}(\tilde{x}_{z,k-1})$ is increasing and its limit, as $k$ tends towards infinity is $0$. We conclude that $\Delta_{z,k}(\tilde{x}_{z,k-1})$ is negative for all $z \geq 2$ and $k \geq 2$. And in turn we obtain $\bar{x}_{z,k} < \tilde{x}_{z,k-1}$.
\end{proof}\hfill$\finpreuvem$\\

In addition, to the framing of the fixed point of $f_{z,k}$, the proof of lemma \ref{fixed} gives that $f_{z,k}$ has a unique critical point (maximum) in $(0,\nicefrac{1}{k}]$. We denote by $\hat{x}_{z,k}$ this maximum, a direct computation of the critical point of $f_{z,k}$ in $(0,\nicefrac{1}{k}]$ yields the value \eqref{maximum_inflexion}. Furthermore, another direct computation of the zeros of $\partial^2_{x} f_{z,k}$ gives the unique inflection point $x^\star_{z,k}$ of $f_{z,k}$ in $(0,\nicefrac{1}{k}]$:
\begin{equation}\label{maximum_inflexion}
\hat{x}_{z,k} = \frac{k^\frac{1}{z-1}-(k-1)^\frac{1}{z-1}}{k^\frac{z}{z-1}-(k-1)^\frac{z}{z-1}},\quad
x^\star_{z,k} = \frac{k^\frac{2}{z-2}-(k-1)^\frac{2}{z-2}}{k^\frac{z}{z-2}-(k-1)^\frac{z}{z-2}}
\end{equation}
Note that $\hat{x}_{z,k} < x^\star_{z,k}$.

Now, we show that having $\bar{x}_{x,k}$ as an attracting fixed point of $f_{z,k}$ guaranties that the asymptotic behavior of the diseases spread is as stated in our main result.
\begin{prop}\label{CV_attractive}
For the uniform case, if $\bar{x}_{z,k}$ is such that $| \partial_x f_{z,k}(\bar{x}_{z,k})| < 1$, then $\bp(n)$ converges to $(\bar{x}_{z,k},\dots,\bar{x}_{z,k},1-k\bar{x}_{x,k})$.
\end{prop}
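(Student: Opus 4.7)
The strategy is to reduce the multidimensional evolution to the scalar recursion $x_{n+1}=f_{z,k}(x_n)$ and to show that every orbit in $(0,\nicefrac{1}{k}]$ converges to $\bar x:=\bar x_{z,k}$ under the spectral assumption. Symmetry of the uniform case together with Lemma \ref{rec} specialized to $G_N(t)=t^z$ preserves the equality $\bp_1(n)=\cdots=\bp_k(n)$; setting $x_n:=\bp_1(n)$, one gets $x_{n+1}=f_{z,k}(x_n)$, and the probabilistic constraint $kx_n\le 1$ combined with Lemma \ref{zeroplus} confines the orbit to the compact set $I:=[\alpha,\nicefrac{1}{k}]$ with $\alpha>0$ and $f_{z,k}(I)\subset I$.

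\textbf{Local attraction.} The hypothesis $|\partial_x f_{z,k}(\bar x)|<1$ together with the $C^\infty$ regularity of $f_{z,k}$ yields $\delta>0$ and $\lambda\in(0,1)$ such that $J_\delta:=[\bar x-\delta,\bar x+\delta]\subset I$ is forward-invariant under $f_{z,k}$ and $f_{z,k}$ acts as a $\lambda$-contraction on $J_\delta$. Consequently, any orbit that enters $J_\delta$ converges to $\bar x$ at geometric rate, so the task reduces to showing that every orbit in $I$ enters $J_\delta$ in finite time.

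\textbf{Global attraction.} I would split according to the sign of $\partial_x f_{z,k}(\bar x)$. If it is nonnegative, then $\bar x\le\hat x_{z,k}$ and $f_{z,k}$ is increasing on $[0,\hat x_{z,k}]$; using that $f_{z,k}(x)>x$ on $(0,\bar x)$ and $f_{z,k}(x)<x$ on $(\bar x,\nicefrac{1}{k}]$ by uniqueness of the fixed point (from Lemma \ref{fixed}), and that $f_{z,k}(\hat x_{z,k})<\hat x_{z,k}$, the orbit lands in $[\alpha,\hat x_{z,k}]$ after one step, an interval on which $f_{z,k}$ is monotone; the sequence $(x_n)_{n\ge1}$ is then monotone and bounded, hence converges to the unique fixed point $\bar x$. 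If $\partial_x f_{z,k}(\bar x)<0$, then $\bar x>\hat x_{z,k}$ and orbits oscillate around $\bar x$; I would pass to $g:=f_{z,k}\circ f_{z,k}$, which is $C^\infty$ with $g(\bar x)=\bar x$ and $g'(\bar x)=\partial_x f_{z,k}(\bar x)^2\in[0,1)$, and apply the monotone-bounded argument to $(x_{2n})$ and $(x_{2n+1})$ separately on $g$-invariant subintervals of $I$ abutting $\bar x$, obtaining $x_n\to\bar x$.

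\textbf{Main obstacle.} The delicate step is the negative-derivative case: the monotone argument on $g$ only yields $\bar x$ as the limit of $(x_{2n})$ provided $\bar x$ is the unique fixed point of $g$ in $I$, i.e.\ provided $f_{z,k}$ has no proper $2$-cycle in $I$. I would rule out such $2$-cycles by exploiting the explicit polynomial form of $f_{z,k}$, the tight framing of Lemma \ref{framing}, and the explicit localization of the critical and inflection points in \eqref{maximum_inflexion}, which together force the graph of $g-\mathrm{id}$ on $I$ to be tame enough to exclude spurious fixed points. Without this, $(x_{2n})$ could otherwise converge to a member of a $2$-cycle rather than to $\bar x$, as indeed happens in the $(z,i)=(6,2)$ regime studied later in the paper.
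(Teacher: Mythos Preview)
Your outline follows the same architecture as the paper's proof: reduce to the scalar map $f:=f_{z,k}$ on $(0,\nicefrac{1}{k}]$, dispose of the case $\bar x\le\hat x$ by a monotone-sequence argument, and in the case $\hat x<\bar x$ pass to $g:=f\circ f$ and argue that the even and odd subsequences are monotone on suitable subintervals. You have correctly located the only nontrivial step: showing that $\bar x$ is the \emph{unique} fixed point of $g$ on $(0,\nicefrac{1}{k}]$, i.e.\ that $f$ has no genuine $2$-cycle.

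Where your proposal remains a sketch is precisely there. The tools you list (Lemma~\ref{framing} and the explicit formulas \eqref{maximum_inflexion}) are not what the paper actually uses, and it is not clear how the framing of $\bar x$ by $\tilde x_{z,k}$ and $\tilde x_{z,k-1}$ would by itself exclude a $2$-cycle. The paper's argument is more structural: from the fact that $f$ has a single critical point $\hat x$ and a single inflection point $x^\star$ on $(0,\nicefrac{1}{k}]$, one reads off the full variation table of $g$ (critical points at $\hat x_\ell$, $\hat x$, and possibly $\hat x_r$, the preimages of $\hat x$) and shows that $g$ is concave on $[0,\hat x_\ell]$ and has \emph{exactly one} inflection point on $[\hat x,\hat x_r]$. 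A putative extra fixed point of $g$ on either side of $\bar x$ would then force at least two inflection points of $g$ on $[\hat x,\hat x_r]$ (because $g'(\hat x)=0$, $g'(\bar x)<1$, and $g'$ would have to reach $\ge 1$ at the extra fixed point and at its image under $f$), a contradiction. Once uniqueness of the fixed point of $g$ is established, the interval-by-interval monotone argument you describe goes through exactly as you say; the paper carries it out on $[\hat x,\bar x]$, $[\bar x,\hat x_r]$, $[\hat x_\ell,\hat x]$, $(0,\hat x_\ell]$, and $[\hat x_r,\nicefrac{1}{k}]$ in turn. So your plan is right, but the ``main obstacle'' paragraph needs to be replaced by this inflection-point counting for $f^2$ rather than an appeal to Lemma~\ref{framing}.
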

\begin{proof}
Since our result does not depend on $z$ and $k$, we will drop the indexation of $f$, $\bar{x}$ and $\hat{x}$ by those integers.\\
Note that if $\hat x \geq \bar x$, necessarily $f^\prime(\bar x)\in[0,1)$ and $f((0,\nicefrac{1}{k}])\subset(0,f(\hat x)]\subset (0,\hat x]$. Since $f((0,\bar x])=(0,\bar x]$ and $f(x)\geq x$ on this interval, $f^n(x)$ tends toward $\bar x$. This result is still true if $x\in[\bar x, \hat x]$ with a similar reasoning.\\
In the rest of the proof we assume that $\hat x<\bar x$ and it is stuctured as follows. First, we study the variations of $f^2 = f\circ f$ and then show that $f^2$ only has one fixed point. Finally we conclude on the convergence of the discrete scalar dynamical system.

{\bf Variations and concavity of $\boldsymbol{f^2}$:}\\
First note that as $\hat x<\bar x$, $f(\hat{x}) > \hat{x}$ and since $f$ is increasing on $[0,\hat{x}]$  there exists a unique $\hat{x}_\ell \in (0,\hat{x})$ such that $f(\hat{x}_\ell) = \hat{x}$. Similarly, $f$ is decreasing in $[\hat{x},\nicefrac{1}{k}]$, if $f(\nicefrac{1}{k}) \leq \hat{x}$ there exists $\hat{x}_r \in (\hat{x},\nicefrac{1}{k}]$ such that $f(\hat{x}_r) = \hat{x}$ and none otherwise.\\
Thus, as $(f^2)^\prime(x)=f^\prime(f(x)) f^\prime(x)$ and $f([0,\nicefrac{1}{k}]) = [0,f(\hat{x})] \subset [0,\nicefrac{1}{k}]$, the only critical points of $f^2$ are $\hat{x}_\ell$, $\hat{x}$ and possibly $\hat{x}_r$. Since $f^2(\hat{x}_\ell) = f(\hat{x})$, it is a maximum of $f^2$ (in $[0,\nicefrac{1}{k}])$, like $\hat x_r$ in case of existence, and thus $\hat{x}$ is a local minimum. To conclude, $f^2$ is increasing on $[0,\hat{x}_\ell]\cup[\hat{x},\min(\nicefrac{1}{k},\hat{x}_r)]$ and decreasing elsewhere. In particular, $f^2$ is increasing on $[\hat{x},\bar{x}] \subset [\hat{x},\min(\nicefrac{1}{k},\hat{x}_r)]$ (we assume that if $\hat x_r$ does not exist in $[0,\nicefrac{1}{k}]$, we take $\hat x_r=+\infty$).\\
Note that this study of the variations of $f^2$ is not restricted to the attracting fixed point case, as long as we have $\hat{x} < \bar{x}$.

A study of the second derivative of $f^2$ gives that it is negative on $[0,\hat{x}_\ell]$ and cancels only once on $[\hat{x},\hat{x}_r]$. So $f^2$ is concave on $[0,\hat{x}_\ell]$ and have exactly one inflection point on $[\hat{x},\hat{x}_r]$ where it is convex then concave. Note that this study of the inflection points of $f^2$ is inconclusive on $[\hat{x}_\ell,x^\star_\ell]$ (with $x^\star_\ell$ the first pre-image of $x^\star$ by $f$) and $[x^\star,\nicefrac{1}{k}]$.

{\bf $\boldsymbol{\bar{x}}$ is the unique fixed point of $\boldsymbol{f^2}$ in $\boldsymbol{(0,\nicefrac{1}{k}]}$:}\\
We reason by contradiction with the maximum number of inflection points of $f^2$ in $[0,\hat{x}_\ell]$ and $[\hat{x},\hat{x}_r]$.

First note that as $\bar{x}$ is such that $f^\prime(\bar{x}) \in (-1,1)$, $(f^2)^\prime(\bar{x}) \in (0,1)$ and so there exists $\eta > 0$ such that $f^2(x) > x$ if $x \in (\bar{x}-\eta,\bar{x})$ and $f^2(x) < x$ if $x \in (\bar{x},\bar{x}+\eta)$. 

Now, assume $\bar x_1=\sup\lbrace x\in (0,\bar x), f^2(x)=x\rbrace$ exists (necessarily $\bar{x}_1 \neq \bar{x}$), thus  $f^2(x) > x$ on $(\bar{x}_1,\bar{x})$ and $(f^2)^\prime(\bar x_1)\geq1$. We face two cases: either $\bar{x}_1 > \hat{x}$, either $\bar{x}_1 < \hat{x}$ (as $\bar{x}_1 = \hat{x}$ gives directly a contradiction since $(f^2)^\prime(\hat{x}) = 0$).

Assume $\bar{x}_1 > \hat{x}$: as $(f^2)^\prime(\bar{x}_1) \geq 1$, $(f^2)^\prime(\hat{x})=0$ and $(f^2)^\prime(\bar{x}) < 1$, $f^2$ has at least one inflection point in $(\hat{x},\bar{x})$. Since $f^2(\bar{x}_1) = \bar{x}_1$, then $\bar x_2=f(\bar{x}_1) =\inf\lbrace x\in(\bar x,\bar x_r), f^2(x)=x\rbrace$ is also a fixed point of $f^2$. 
Similarly as before, $f^2$ has at least one inflection point in $(\bar{x},\hat{x}_r)$. So $f^2$ has a least 2 inflection points on $[\hat{x},\hat{x}_r]$ which raises a contradiction.

Since $f^2$ has no fixed point in $[\hat{x},\bar{x})$, it has none in $[\hat{x}_\ell,\hat{x}]$ because $f^2$ is decreasing on this interval. Moreover, $f^2$ is concave on $[0,\hat{x}_\ell]$, $f^2(0) = 0$ and $f^2(\hat{x}_\ell) > \hat{x}_\ell$ so that $f^2(x) > x$ on $(0,\hat{x}_\ell]$.

We thus conclude that $\bar{x}_1$ does not exist. If there exists a fixed point $\bar{x}_2$ of $f^2$ in $(\bar{x},\nicefrac{1}{k}]$ then $f(\bar{x}_2)\in(0,\bar{x})$ is also a fixed point of $f^2$ which is not possible. So $f^2$ doesn't have a fixed point in $(\bar{x},\nicefrac{1}{k}]$ either.
 

\begin{Rem}
Note that a corollary to the uniqueness of the fixed point $\bar{x}$ is that $f^2(\hat{x}) > \hat{x}$.
\end{Rem}


{\bf $\boldsymbol{\bar{x}}$ is asymptotically stable for $\boldsymbol{f}$ in $\boldsymbol{(0,\nicefrac{1}{k}]}$:}\\
We assume that $\hat{x}_r$ exists, otherwise the reasoning is the same with $\hat{x}_r$ replaced by $\nicefrac{1}{k}$ and one less step: we would not have to consider the interval $[\hat{x}_r,\nicefrac{1}{k}]$.


From the variations of $f^2$, we have that $f^2([\hat{x},\bar{x}]) = [f^2(\hat{x}),\bar{x}] \subset [\hat{x},\bar{x}]$. So if $x \in [\hat{x},\bar{x}]$ then $f^{2n}(x)$ is an increasing bounded above sequence, and so it tends towards $\bar{x}$, the unique fixed point of $f^2$ in $[\hat x,\bar x]$. As $\bar{x}$ is the fixed point of $f$ and $f$ is continuous, $f\circ f^{2n}(x) = f^{2n+1}(x)$ also tends towards $\bar{x}$.

Since $f^2(\hat{x}_r) =f(\hat x)< \hat{x}_r$, we have that $f^2([\bar{x},\hat{x}_r]) = [\bar{x},f(\hat{x})] \subset [\bar{x},\hat{x}_r]$. So if $x \in [\bar{x},\hat{x}_r]$, then $f^{2n}(x)$ is a decreasing bounded below sequence. As for the $[\hat{x},\bar{x}]$ case, we conclude that $f^n(x)$ tends toward $\bar{x}$.

As $f^2(\hat{x}) > \hat{x}$ and $f^2(\hat{x}_\ell) = f(\hat{x}) < \hat{x}_r$, we have that $f^2([\hat{x}_\ell,\hat{x}]) = [f^2(\hat{x}),f(\hat{x})] \subset [\hat{x},\hat{x}_r]$. And again, we conclude that if $x \in [\hat{x}_\ell,\hat{x}]$, then $f^n(x)$ tends towards $\bar{x}$.

We have that $f^2((0,\hat{x}_\ell]) = (0,f(\hat{x})]$ and if $x \in (0,\hat{x}_\ell]$, then $f^2(x) > x$ so $f^{2n}(x)$ is an increasing bounded above sequence so it converges to $\bar{x}$. Finally, $f^2([\hat{x}_r,\nicefrac{1}{k}]) = [f^2(\nicefrac{1}{k}),f(\hat{x})] \subset (0,f(\hat{x})]$ and the previous case gives the convergence to $\bar{x}$.

So we have that for all $x \in (0,\nicefrac{1}{k}]$, $\lim_{n\to\infty} f^n(x) = \bar{x}$, which concludes the proof.
\end{proof}\hfill$\finpreuvem$

Figure \ref{TabVarF} sums-up what we gather of the variations of $f_{z,k}$ when $\bar{x}_{z,k}$ is linearly attracting.
\begin{figure}[h!]
\caption{\label{TabVarF} Table of variations of $f_{z,k}$ when $\hat x_{z,k}<\bar x_{z,k}$}
\vspace*{-0.5cm}
\begin{center}
\includegraphics[width=10cm]{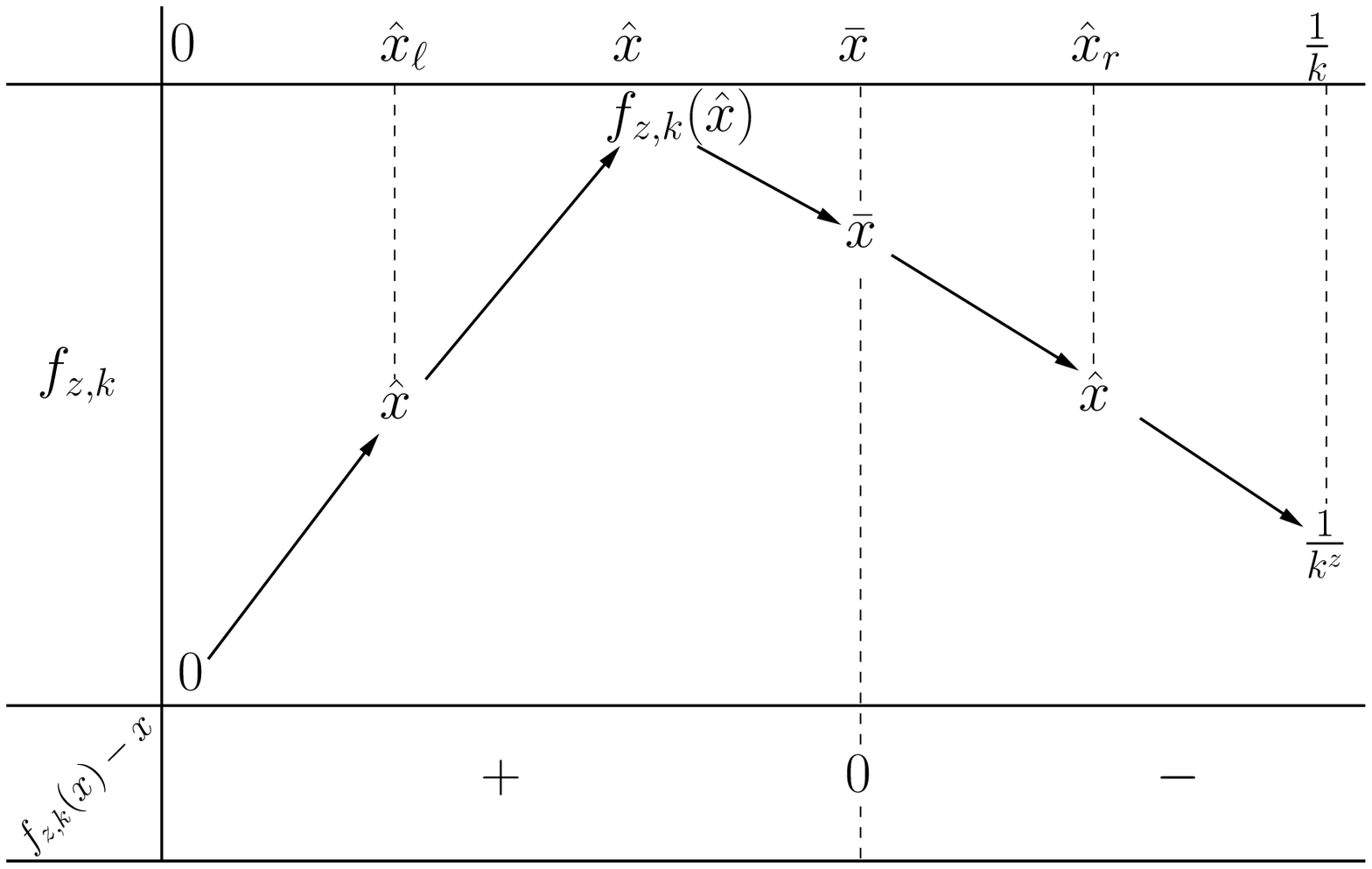}
\end{center}
\end{figure}

\begin{prop}\label{dim1attrac}
If $\bar x_{z,i}$ is a linear attractor for the uniform case with $i$ diseases, then ${\bf{\bar x}}=(\bar x_{z,i},\dots,\bar x_{z,i},0_{k-i},1-i\bar x_{z,i})$ is a linear attractor for the non uniform case with $\bp_1=\dots=\bp_i>\bp_{i+1}\geq\dots\geq \bp_k$. 
\end{prop}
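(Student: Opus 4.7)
The plan is to reduce to the $(k-i+1)$-dimensional dynamical system preserved by the dynamics, compute the Jacobian of the reduced map at the candidate fixed point, observe it is upper block-triangular, and check that every eigenvalue lies in $(-1,1)$.

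First I would exploit the symmetry among the first $i$ coordinates: since $\bp_1(0)=\cdots=\bp_i(0)$ and the recursion of Lemma~\ref{rec} is symmetric in these coordinates, $\bp_1(n)=\cdots=\bp_i(n)$ for all $n\geq 0$. As pointed out at the start of Section~\ref{s_zary}, it therefore suffices to analyse the map $\tilde F$ on $[0,1]^{k-i+1}$ with coordinates $y=(y_1,y_{i+1},\ldots,y_k)$, where $y_1$ stands for the common value of the major coordinates:
\begin{align*}
\tilde F_1(y) & = G_N\!\left(1-(i-1)y_1-\sum_{\ell\geq i+1} y_\ell\right)-G_N\!\left(1-iy_1-\sum_{\ell\geq i+1} y_\ell\right),\\
\tilde F_j(y) & = G_N\!\left(1-iy_1-\sum_{\ell\geq i+1,\,\ell\neq j} y_\ell\right)-G_N\!\left(1-iy_1-\sum_{\ell\geq i+1} y_\ell\right),\quad j\geq i+1.
\end{align*}
The point $\bar y=(\bar x_{z,i},0,\ldots,0)$ is a fixed point of $\tilde F$ whose embedding into the $(k+1)$-simplex is precisely $\bar{\mathbf x}$.

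Next I would differentiate $\tilde F$ at $\bar y$. The crucial observation is that, for $j\geq i+1$, the two arguments of $\tilde F_j$ coincide at $1-i\bar x_{z,i}$ when evaluated at $\bar y$ because the missing minor coordinate $y_j$ vanishes there; this immediately forces $\partial_{y_1}\tilde F_j(\bar y)=0$ and $\partial_{y_{j'}}\tilde F_j(\bar y)=0$ for $j'\neq j$, while $\partial_{y_j}\tilde F_j(\bar y)=G_N'(1-i\bar x_{z,i})$. The entry $\partial_{y_1}\tilde F_1(\bar y)$ reduces exactly to $f_i'(\bar x_{z,i})$. Hence $D\tilde F(\bar y)$ is upper block-triangular with diagonal blocks $[\,f_i'(\bar x_{z,i})\,]$ and $G_N'(1-i\bar x_{z,i})\,I_{k-i}$, and its spectrum is $\{f_i'(\bar x_{z,i})\}\cup\{G_N'(1-i\bar x_{z,i})\}$, the second eigenvalue having multiplicity $k-i$.

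It remains to control these two eigenvalues. The first satisfies $|f_i'(\bar x_{z,i})|<1$ by assumption. For the second I would apply the mean value theorem to the fixed point equation $G_N(1-(i-1)\bar x_{z,i})-G_N(1-i\bar x_{z,i})=\bar x_{z,i}$: it provides some $\xi\in\bigl(1-i\bar x_{z,i},\,1-(i-1)\bar x_{z,i}\bigr)$ with $G_N'(\xi)=1$. The B\"otcher hypothesis $q_0+q_1=0$ forces some $q_n$ with $n\geq 2$ to be positive, so $G_N''>0$ on $(0,1]$ and $G_N'$ is strictly increasing. In particular $0<G_N'(1-i\bar x_{z,i})<G_N'(\xi)=1$. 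Every eigenvalue of $D\tilde F(\bar y)$ therefore lies in $(-1,1)$, so $\bar{\mathbf x}$ is linearly attracting for $F$. The main step requiring care is this last estimate: linear stability along the uniform direction is inherited directly from the hypothesis, but the transverse eigenvalue governing the contraction of the minor coordinates is not automatic and has to be bounded via the fixed point relation combined with strict convexity of $G_N$.
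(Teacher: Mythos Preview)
Your argument is correct and follows the same overall architecture as the paper's proof: reduce to the $(k-i+1)$-dimensional map $\tilde F$, compute the Jacobian at $(\bar x_{z,i},0_{k-i})$, observe it is upper triangular, identify the spectrum as $\{f_i'(\bar x_{z,i}),\,G_N'(1-i\bar x_{z,i})\}$, and then bound the two eigenvalues separately.

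The one genuine difference lies in how you control the transverse eigenvalue $G_N'(1-i\bar x_{z,i})=z(1-i\bar x_{z,i})^{z-1}$. The paper appeals to the framing Lemma~\ref{framing}, which gives the explicit lower bound $\bar x_{z,i}>\tilde x_{z,i}=\frac{1}{i}\bigl(1-(1/z)^{1/(z-1)}\bigr)$; since $z(1-i\tilde x_{z,i})^{z-1}=1$ by construction, monotonicity yields $z(1-i\bar x_{z,i})^{z-1}<1$. You instead apply the mean value theorem directly to the fixed point relation $G_N(1-(i-1)\bar x)-G_N(1-i\bar x)=\bar x$ to produce $\xi\in(1-i\bar x,1-(i-1)\bar x)$ with $G_N'(\xi)=1$, and conclude via strict convexity of $G_N$. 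Your route is more self-contained (it does not rely on Lemma~\ref{framing}) and in fact works verbatim for a general Galton--Watson offspring law satisfying the B\"otcher hypothesis, whereas the paper's argument is tied to the explicit $z$-ary formula for $\tilde x_{z,i}$. The price is only that the paper's approach reuses a lemma it needs elsewhere anyway.
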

\begin{proof}
${\bf{\bar x}}$ is a linear attractor if all the eigenvalues of the matrix $A:=\frac{\partial \tilde F}{\partial x}({\bf{y}})$ are in $(-1,1)$, with $\tilde{F} = (\tilde{F}_1,\dots,\tilde{F}_{k-i+1}) = (F_1,F_{i+1},\dots, F_k)$, a truncated version of $F$ and ${\bf{y}}=(\bar x_{z,i},0_{k-i})$. $\tilde F$ is defined by
\[ \tilde F_\ell(x)=\left\lbrace\begin{array}{ll}\left(1-(i-1)x_1-\sum_{j=2}^{k-i+1}x_j\right)^z-\left(1-ix_1-\sum_{j=2}^{k-i+1}x_j\right)^z&\mbox{ if $\ell=1$}\\
\left(1-ix_1-\sum_{j=2,j\neq\ell}^{k-i+1}x_j\right)^z-\left(1-ix_1-\sum_{j=2}^{k-i+1}x_j\right)^z&\mbox{ otherwise.}
\end{array}\right.\]

Note that $\frac{\partial \tilde F_m}{\partial x_\ell}({{x}})$ equals to
\[\left\lbrace\begin{array}{ll}
-z(i-1)\left(1-(i-1)x_1-\sum_{j=2}^{k-i+1}x_j\right)^{z-1}+zi\left(1-ix_1-\sum_{j=2}^{k-i+1}x_j\right)^{z-1}&\mbox{ if $m=\ell=1$}\\
-iz\left(1-ix_1-\sum_{j=2,j\neq\ell}^{k-i+1}x_j\right)^{z-1}+iz\left(1-ix_1-\sum_{j=2}^{k-i+1}x_j\right)^{z-1}&\mbox{if $m\neq 1$, $\ell=1$}\\
z\left(1-ix_1-\sum_{j=2}^{k-i+1}x_j\right)^{z-1}&\mbox{if $m=\ell>1$}\\
-z\left(1-ix_1-\sum_{j=2,j\neq\ell}^{k-i+1}x_j\right)^{z-1}+z\left(1-ix_1-\sum_{j=2}^{k-i+1}x_j\right)^{z-1}&\mbox{if $1\neq m\neq \ell\neq 1$}\\
\end{array}\right.\]
implying
\[\frac{\partial \tilde F_m}{\partial x_\ell}({\bf{y}})=\left\lbrace\begin{array}{ll}
-z(i-1)(1-(i-1)\bar x_{z,i})^{z-1}+zi(1-i\bar x_{z,i})^{z-1}&\mbox{ if $m=\ell=1$}\\
0&\mbox{if $m\neq 1$, $\ell=1$}\\
z(1-i\bar x_{z,i})^{z-1}&\mbox{if $m=\ell>1$}\\
0&\mbox{if $1\neq m\neq \ell\neq 1$}\\
\end{array}\right.\]
Thus $A$ is upper triangular and its spectrum is $\lbrace-z(i-1)(1-(i-1)\bar x_{z,i})^{z-1}+zi(1-i\bar x)^{z-1},z(1-i\bar x_{z,i})^{z-1}\rbrace$. First note that as $\bar x_{z,i}$ is an attracting fixed point for the uniform case then $\frac{\partial\tilde F_1}{\partial x_\ell}({\bf{y}})\in(-1,1)$.

It remains to prove that (for $j =2,\dots,k-i+1$),
\begin{equation*}
0 \leq \frac{\partial\tilde F_j}{\partial x_j} (\bar{x}_{z,i},0_{k-i}) = z(1-i\bar{x}_{z,i})^{z-1} < 1,
\end{equation*}
and note that positivity is obvious.

From lemma \ref{framing}, $\bar{x}_{z,i} > \tilde{x}_{z,i}$ for all $z \geq 2$ and $i \geq 1$, and 
\begin{equation*}
\frac{\partial\tilde  F_j}{\partial x_j} (\bar{x}_{z,i},0_{k-i}) < \frac{\partial\tilde F_j}{\partial x_j} (\tilde{x}_{z,i},0_{k-i}) = 1,
\end{equation*}
which concludes the proof.\hfill$\finpreuvem$
\end{proof}

\begin{lem}\label{fixed_limit}
For the uniform case, and $z \geq 2$, there exists $K_z > 0$ such that, for all $k \geq K_z$, the fixed point $(\bar{x}_{z,k},\dots,\bar{x}_{z,k}, 1-k\bar x_{z,k})$ is {\bf attracting} if $z \leq 5$, {\bf repelling} otherwise.
\end{lem}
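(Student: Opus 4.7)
The stability of the fixed point is determined by $|f_{z,k}'(\bar{x}_{z,k})|$: the point is attracting if this quantity is strictly less than $1$ (cf.\ Proposition \ref{CV_attractive}) and repelling if it is strictly greater than $1$. My plan is therefore to derive an asymptotic formula for $f_{z,k}'(\bar{x}_{z,k})$ as $k \to \infty$ and then to check the threshold $z = 6$ numerically.

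Starting from $f_{z,k}(x) = (1-(k-1)x)^z - (1-kx)^z$, direct differentiation gives
\begin{equation*}
f_{z,k}'(\bar{x}_{z,k}) = -z(k-1)\bigl(1-(k-1)\bar{x}_{z,k}\bigr)^{z-1} + zk\bigl(1-k\bar{x}_{z,k}\bigr)^{z-1},
\end{equation*}
a difference of two individually divergent terms. By Lemma \ref{framing}, both $(k-1)\bar{x}_{z,k}$ and $k\bar{x}_{z,k}$ tend to $1-a_z$ as $k\to\infty$, where $a_z = z^{-1/(z-1)}$ satisfies $z a_z^{z-1}=1$. A Taylor expansion of $(1-(k-1)\bar{x}_{z,k})^{z-1}$ about $(1-k\bar{x}_{z,k})^{z-1}$, using that the gap between them equals $\bar{x}_{z,k} = O(1/k)$, should make the two $O(k)$ contributions cancel and leave
\begin{equation*}
L_z := \lim_{k\to\infty} f_{z,k}'(\bar{x}_{z,k}) = z a_z^{z-1} - z(z-1)(1-a_z)\,a_z^{z-2} = z - (z-1)\, z^{1/(z-1)},
\end{equation*}
where the last equality uses $z a_z^{z-1} = 1$.

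It then remains to compare $|L_z|$ with $1$. Direct evaluation gives $L_2=0$, $L_3 = 3-2\sqrt{3}$, $L_4 = 4-3\sqrt[3]{4}$, $L_5 = 5-4\cdot 5^{1/4}$, all lying in $(-1,0]$, while $L_6 = 6-5\cdot 6^{1/5} \approx -1.155$ has absolute value larger than $1$. To handle all $z\geq 7$ at once rather than case by case, I would treat $z$ as a real variable (as the Remark preceding Lemma \ref{framing} allows) and show that $z \mapsto L_z$ is strictly decreasing, via a logarithmic differentiation of $(z-1)z^{1/(z-1)}$. This yields $L_z < -1$ for every integer $z \geq 6$. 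Finally, the continuity of $k \mapsto f_{z,k}'(\bar{x}_{z,k})$ combined with the limit $L_z$ yields the threshold $K_z$ required by the statement.

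The main obstacle is organising the Taylor expansion so that the cancellation between the two $O(k)$ terms is explicit and the surviving $O(1)$ remainder is identified correctly; everything else is either routine calculus or a finite numerical verification.
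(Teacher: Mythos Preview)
Your proposal is correct and follows essentially the same strategy as the paper: compute the limit $L_z=\lim_{k\to\infty}f_{z,k}'(\bar x_{z,k})$ and compare it with $\pm 1$. The paper's execution differs only tactically: rather than Taylor-expanding at $\bar x_{z,k}$, it uses the sandwich of Lemma~\ref{framing} to replace $\bar x_{z,k}$ by the explicit $\tilde x_{z,k}$ (for which $z(1-k\tilde x_{z,k})^{z-1}=1$ holds exactly), so the cancellation of the two $O(k)$ terms becomes an algebraic identity instead of a first-order expansion; the resulting limit $1-(z-1)z^{1/(z-1)}\bigl(1-z^{-1/(z-1)}\bigr)$ is the same as your $z-(z-1)z^{1/(z-1)}$. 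Your added monotonicity argument for $z\mapsto L_z$ to cover all $z\ge 7$ is a small improvement over the paper, which simply asserts that $L_z<-1$ for $z\ge 6$.
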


\begin{proof}
It is enough to study the asymptotic behavior of $\partial_x f_{z,k}(\bar{x}_{z,k})$, the non zero eigenvalue of the linearized dynamical system.

From lemma \ref{framing} and the continuity of $\partial_x f_{z,k}$:
\begin{equation*}
\lim_{k\to\infty} \partial_x f_{z,k} (\bar{x}_{z,k}) = \lim_{k\to\infty} \partial_x f_{z,k}(\tilde{x}_{z,k})
\end{equation*}
Furthermore, the expression of $\tilde{x}_{z,k}$ gives us an equivalency as $k$ tends to infinity
\begin{eqnarray*}
\partial_x f_{z,k}(\tilde{x}_{z,k}) & = & -z(k-1)(1-(k-1)\tilde{x}_{z,k})^{z-1} + zk(1-k\tilde{x}_{z,k})^{z-1}\\
& = & -z(k-1)(1-(k-1)\tilde{x}_{z,k})^{z-1} + k\\
& = & k(1-z(1-(k-1)\tilde{x}_{z,k})^{z-1}) + z(1-(k-1)\tilde{x}_{z,k})^{z-1}\\
& \underset{k\rightarrow\infty}{\sim} & k\left( 1 - z(k-k\tilde{x}_{z,k})^{z-1}\left( 1 + \frac{\tilde{x}_{z,k}}{1-k\tilde{x}_{z,k}}\right)\right) + 1\\
& \underset{k\rightarrow\infty}{\rightarrow} &  1 - (z-1)z^\frac{1}{z-1} \left( 1 - \left(\frac{1}{z}\right)^\frac{1}{z-1}\right)
\end{eqnarray*}
And this limit is in $(-1,1)$ for $z \leq 5$ and is less than $-1$ for $z \geq 6$, which concludes the proof.
\end{proof}\hfill$\finpreuvem$

\begin{Rem}
If we were able to prove that the eigenvalue $\partial_x f_{z,k}(\bar{x}_{z,k})$ is decreasing with respect to $k$, then its asymptotic value would directly give us that for $z \leq 5$, the fixed point is a linear attractor for all $k\geq1$. Even though this monotonicity seems true numerically, we were not able to prove it, and were reduced to cumbersome computations for $z \in \{5,6\}$.
\end{Rem}

The following result focuses on the fixed points of 3-ary, 4-ary and 5-ary trees:
\begin{lem}\label{fixed345_attrac}
For the uniform case, if $z \in \{3,4,5\}$, and $k\geq 2$ the fixed point $\bar x_{z,k}$ is linearly attracting, that is $\partial_x f_{z,k}(\bar{x}_{z,k}) \in (-1,1)$.
\end{lem}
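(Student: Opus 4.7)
The plan is to prove the two bounds $-1 < \partial_x f_{z,k}(\bar{x}_{z,k}) < 1$ separately, since they are of quite different difficulty.

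For the upper bound, I would use the factorisation $f_{z,k}(x) = x \cdot h_{z,k}(x)$ appearing in the proof of Lemma \ref{fixed}, where $h_{z,k}(x) = \sum_{j=0}^{z-1}(1-(k-1)x)^{z-1-j}(1-kx)^j$ and $h_{z,k}(\bar{x}_{z,k}) = 1$. Differentiation gives $\partial_x f_{z,k}(\bar{x}_{z,k}) = 1 + \bar{x}_{z,k}\, h_{z,k}'(\bar{x}_{z,k}) < 1$, since every summand of $h_{z,k}$ is a product of positive, strictly decreasing functions on $(0,\nicefrac{1}{k}]$. This upper bound holds uniformly in $z\geq 2$ and $k\geq 2$.

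For the lower bound, I would change variables by setting $u := 1-(k-1)\bar{x}_{z,k}$ and $v := 1-k\bar{x}_{z,k}$, so that $u - v = \bar{x}_{z,k}$, $0 < v < u < 1$, $(k-1)\bar{x}_{z,k} = 1-u$, and $k\bar{x}_{z,k} = 1-v$. Writing $S_m(u,v) := \sum_{j=0}^{m-1} u^{m-1-j}v^j = (u^m - v^m)/(u-v)$, the fixed-point equation becomes $S_z(u,v) = 1$. A short computation using $u^z - v^z = \bar{x}_{z,k}$ and $u^{z-1} - v^{z-1} = \bar{x}_{z,k}\,S_{z-1}$ yields the key identity
$$\partial_x f_{z,k}(\bar{x}_{z,k}) \;=\; z\bigl(S_z(u,v) - S_{z-1}(u,v)\bigr) \;=\; z\bigl(1 - S_{z-1}(u,v)\bigr),$$
so the lower bound becomes the single inequality $S_{z-1}(u,v) < (z+1)/z$ on the locus $\{S_z(u,v)=1,\ 0<v<u<1\}$.

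Passing to the symmetric coordinates $s := u+v$ and $p := uv$, one computes $S_2 = s$, $S_3 = s^2 - p$, $S_4 = s(s^2 - 2p)$ and $S_5 = s^4 - 3s^2 p + p^2$. For each $z\in\{3,4,5\}$ the constraint $S_z=1$ eliminates $p$ and the requirement $p\in(0,s^2/4)$ fixes the admissible $s$-range, so that $S_{z-1} < (z+1)/z$ reduces to a polynomial inequality in the single variable $s$. For $z=3$: $p = s^2-1$, $s\in(1,2/\sqrt 3]$, and $S_2 = s \leq 2/\sqrt 3 < 4/3$ is immediate. For $z=4$: $p=(s^3-1)/(2s)$, $s\in(1,2^{1/3}]$, and $S_3 < 5/4$ becomes $2s^3 - 5s + 2 < 0$, verified by monotonicity on this range together with an endpoint check. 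For $z=5$: $p = (3s^2 - \sqrt{5s^4+4})/2$ (the root compatible with $p<s^2/4$), $s\in(1,2\cdot 5^{-1/4}]$, and after squaring the inequality $S_4 < 6/5$ reduces to
$$25 s^6 - 120 s^3 + 100 s^2 - 36 < 0,$$
which is verified by observing that the derivative $10s(15s^4 - 36s + 20)$ changes sign exactly once on the range, so the polynomial has a unique interior minimum, and both endpoint values are strictly negative.

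The main obstacle is the $z=5$ case: the fixed-point equation is genuinely quadratic in $p$, forcing a square root into the reduction; the resulting inequality is of degree six rather than three; and the margin is very tight, since at $s = 2\cdot 5^{-1/4}$ (which corresponds to $u=v$, i.e.\ the limit $k\to\infty$) one has $S_4 \approx 1.1968$ against $6/5=1.2$. This razor-thin gap is precisely the critical behaviour captured by Lemma \ref{fixed_limit}, and is the reason the same scheme necessarily fails for $z\geq 6$, where the right-endpoint value already exceeds $(z+1)/z$.
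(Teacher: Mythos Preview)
Your argument is correct, and it is genuinely different from the paper's.

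For the upper bound, the paper bounds $\partial_x f_{z,k}(\bar x_{z,k})$ above by $z(1-k\bar x_{z,k})^{z-1}$ and then invokes Lemma~\ref{framing} to get $<1$; your factorisation $f=x\,h$ with $h(\bar x)=1$ and $h'<0$ is more self-contained and avoids the framing lemma altogether.

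For the lower bound, the paper splits cases. For $z\in\{3,4\}$ it evaluates $\partial_x f_{z,k}$ at the inflection point $x^\star_{z,k}$ (the global minimiser of $\partial_x f_{z,k}$) and checks directly that this rational function of $k$ exceeds $-1$. For $z=5$ this fails, so the paper instead shows $\tilde x_{5,k-1}<x^\star_{5,k}$ for $k\geq 4$, bounds $\partial_x f_{5,k}(\bar x_{5,k})$ below by $\partial_x f_{5,k}(\tilde x_{5,k-1})$, reduces this to a degree-4 polynomial inequality valid for $k\geq 27$, and then \emph{numerically} checks the remaining $k\in\llbracket 1,26\rrbracket$. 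Your route is uniform across all three $z$: the identity $\partial_x f_{z,k}(\bar x_{z,k})=z(1-S_{z-1})$ on the locus $S_z=1$ collapses the two-parameter problem $(z,k)$ to a one-variable polynomial inequality in $s=u+v$, which you dispatch analytically in each case. This removes the need for any case-by-case numerical verification and makes transparent why the bound is tight at the right endpoint $u=v$ (the $k\to\infty$ limit), matching Lemma~\ref{fixed_limit}. The paper's approach, by contrast, is slightly more elementary for $z\in\{3,4\}$ but markedly heavier for $z=5$.
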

\begin{proof}
For the upper bound on the derivative, we have, for all $z \geq 2$
\begin{eqnarray*}
\partial_x f_{z,k}(\bar{x}_{z,k}) & = & -z(k-1)z(1-(k-1)\bar{x}_{z,k})^{z-1} + kz(1-k\bar{x}_{z,k})^{z-1}\\
& < & -z(k-1)(1-k\bar{x}_{z,k})^{z-1} + kz(1-k\bar{x}_{z,k})^{z-1}\\
& < & z(1-k\bar{x}_{z,k})^{z-1}\\
& < & z(1-k\tilde{x}_{z,k})^{z-1} = 1,
\end{eqnarray*}
the last inequality comes from lemma \ref{framing}.

%

%
%
%
For the lower bound on the derivative, we treat differently the cases $z \in \{3,4\}$ and $z = 5$.

{\bf Lower bound, case $\mathbf{z \in \{3,4\}}$:} noting that the unique inflection point of $f_{z,k}$ is such that $x^\star_{z,k} > \hat{x}_{z,k}$
\begin{equation*}
\partial_x f_{z,k}(x^\star_{z,k}) = \underset{x \in [0,\nicefrac{1}{k}]}{\min}\ \partial_x f_{z,k}(x) \leq \partial_x f_{z,k} (\bar{x}_{z,k})
\end{equation*}
Direct computations for $z=3$ and $4$ give
\begin{equation*}
\partial_x f_{3,k}(x^\star_{3,k}) = -\frac{3 (k-1) k}{3 k^2-3 k+1}  > -1,\,
\partial_x f_{4,k}(x^\star_{4,k}) = -\frac{4 (k-1) k}{(1-2 k)^2}  > -1 , \,\forall\, k \geq 2,
\end{equation*}
which concludes this case.

{\bf Lower bound, case $\mathbf{z = 5}$:} here $\partial_x f_{5,k}(x^\star_{5,k}) < -1$ for $k \geq 3$ so we cannot use the same argument. Instead, we prove that $\tilde{x}_{5,k-1} < x^\star_{5,k}$ for $k$ large enough, which give $\partial_x f_{5,k}(\tilde{x}_{5,k-1})$ as a lower bound for $\partial_x f_{5,k}(\bar{x}_{5,k})$. We write
$$x^\star_{5,k} = \frac{\beta_k}{1+k\beta_k},\quad \beta_k = \left(\frac{k}{k-1}\right)^\frac{2}{3}-1 > 0$$
and
$$\tilde{x}_{5,k-1} = \frac{\delta}{k-1},\quad \delta = 1 - \left(\frac{1}{5}\right)^\frac{1}{4} > 0$$
Comparing $x^\star_{5,k}$ and $\tilde{x}_{5,k-1}$
\begin{equation*}
x^\star_{5,k} - \tilde{x}_{5,k-1} = \frac{\beta_k (k-1-\delta k)-\delta}{(k-1)(1+k\beta_k)}
\end{equation*}
The denominator of which is positive. If $k \in \mathbb{C}$ is a zero of the numerator then, straightforward computations give
\begin{equation*}
0 = k^2(k-1-\delta k)^3 - (k-1)^2(\delta+(k-1-\delta k))^3 \in \R_4[X]
\end{equation*}
A numerical computation of the roots of this fourth order polynomial gives 2 complex conjugated roots and 2 real roots. The 2 real roots are approximatively $k_1 = 0.3079371$ and $k_2 = 3.3623924$. So the numerator has constant sign for $k > k_2$ and in particular for $k \geq 4$. Noting that $x^\star_{5,4} > \tilde{x}_{5,3}$ we conclude that $x^\star_{5,k} > \tilde{x}_{5,k-1}$ for all $k \geq 4$ and the following lower bound for the eigenvalue
\begin{equation*}
\partial_x f_{5,k}(\bar{x}_{5,k}) > \partial_x f_{5,k}(\tilde{x}_{5,k-1}),\ \forall k \geq 4
\end{equation*}

A straightforward computation gives that $\partial_x f_{5,k+27}(\tilde{x}_{4,k-1+27}) > -1$ is equivalent to
\begin{eqnarray*}
(6-4\cdot5^{1/4})k^4 + (634+6\sqrt{5}-432\cdot5^{1/4})k^3 + (25126-17496\cdot5^{1/4}486\sqrt{5}-4\cdot5^{3/4})k^2 &\\
+ (442634-314925\cdot5^{1/4}+13122\sqrt{5}-216\cdot5^{3/4})k &\\
+ 2924642-2125764\cdot5^{1/4}+118098\sqrt{5}-2916\cdot5^{3/4} & > 0,
\end{eqnarray*}
which is true for all $k \geq 0$. So $\partial_x f_{5,k}(\tilde{x}_{5,k-1}) > -1$ for all $k \geq 27$. 

For $k \in \llbracket1,26\rrbracket$, a direct numerical computation of the eigenvalues gives that they all belong to  $(-1,0)$.\hfill$\finpreuvem$
\end{proof}

\begin{Rem}
It is actually possible to show that $\partial_x f_{z,k}(\bar{x}_{z,k}) < 0$ for $z \in \{ 3,4,5\}$. To do this, one can compute $\partial_x f_{z,k}(\tilde{x}_{z,k})$ and show that it is decreasing with respect to $k \geq 3$ for $z = 3$ and decreasing for $k \geq 2$ for $z \in \{4,5\}$. This leads to $\partial_x f_{z,k}(\tilde{x}_{z,k}) < 0$ for these $(z,k)$ and as $\tilde{x}_{z,k} < \bar{x}_{z,k}$, $\bar{x}_{z,k} > \hat{x}_{z,k}$. So that $f_{z,k}$ is decreasing at $\bar{x}_{z,k}$.
\end{Rem}

\begin{lem}\label{z6_repel}
For the uniform case, $z=6$ and $k \geq 2$ the fixed point $\bar{x}_{6,k}$ is repelling.
\end{lem}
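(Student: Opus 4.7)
The plan is to reduce the eigenvalue $\partial_x f_{6,k}(\bar x_{6,k})$ to a rational function of a single auxiliary variable using the fixed point equation, then establish the strict inequality $\partial_x f_{6,k}(\bar x_{6,k}) < -1$ via a combination of asymptotic analysis and direct verification for small $k$, mirroring the approach used in Lemma \ref{fixed345_attrac} for $z=5$. Note that the simple endpoint bound from Lemma \ref{framing} is \emph{not} sharp enough here: numerically $\partial_x f_{6,2}(\tilde x_{6,2})\approx -0.65>-1$, so a finer argument is needed.

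First, I would factor the fixed point equation $u^6 - v^6 = u - v$ (with $u = 1-(k-1)\bar x_{6,k}$ and $v = 1-k\bar x_{6,k}$, so that $u-v=\bar x_{6,k}$) into $\sum_{j=0}^5 u^{5-j} v^j = 1$. Introducing $r := v/u \in (0,1)$, this rewrites as $(k - (k-1)r)^5 = 1+r+r^2+r^3+r^4+r^5$ after using $u(1-r)=\bar x_{6,k}$ and $u=1-(k-1)\bar x_{6,k}$, which give $u = 1/(k-(k-1)r)$. Substituting into $\partial_x f_{6,k}(\bar x_{6,k}) = -6(k-1) u^5 + 6k v^5$ yields
$$\partial_x f_{6,k}(\bar x_{6,k}) = \frac{6\bigl(k r_k^5 - (k-1)\bigr)}{1+r_k+r_k^2+r_k^3+r_k^4+r_k^5},$$
where $r_k$ denotes the root in $((k-6^{1/5})/(k-1),1)$ of the reduced equation. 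The inequality $\partial_x f_{6,k}(\bar x_{6,k}) < -1$ is therefore equivalent to the polynomial condition
$$P_k(r_k) := (6k+1) r_k^5 + r_k^4 + r_k^3 + r_k^2 + r_k + 7 - 6k < 0.$$

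Second, I would introduce the auxiliary quantity $r^*_k$, the unique root of $P_k$ in $(0,1)$ (its uniqueness follows from $P_k'>0$, $P_k(0)=7-6k<0$, $P_k(1)=13>0$). Since $r\mapsto (k-(k-1)r)^5 - \sum_{j=0}^5 r^j$ is strictly decreasing, we have $r_k<r^*_k$ iff $(k-(k-1)r^*_k)^5 < \sum_{j=0}^5 (r^*_k)^j$. An asymptotic expansion gives $r_k = 1 - (6^{1/5}-1)/(k-1) + o(1/k)$ (from matching $(1+(k-1)(1-r_k))^5 \to 6$) and $r^*_k = 1 - 8/(30k+5) + o(1/k)$, and since $6^{1/5}-1 \approx 0.431 > 4/15 \approx 0.267$ we obtain $r_k < r^*_k$ for all $k$ sufficiently large. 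This is consistent with the limit $\lim_{k\to\infty} \partial_x f_{6,k}(\bar x_{6,k}) = 1 - 5 \cdot 6^{1/5}(1-6^{-1/5}) \approx -1.155$ established in Lemma \ref{fixed_limit}. To make the asymptotic estimate uniform, I would carry out a quantitative expansion of both $r_k$ and $r^*_k$ with explicit error control, which produces a polynomial positivity statement in $k$ to be verified for $k \geq k_0$ for some explicit $k_0$.

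Third, for the finitely many remaining values $k \in \{2,\ldots,k_0\}$, I would verify $P_k(r_k)<0$ by direct numerical computation of the fixed point $r_k$ and of $\partial_x f_{6,k}(\bar x_{6,k})$, exactly as is done at the end of the proof of Lemma \ref{fixed345_attrac} for $k\in\llbracket 1,26\rrbracket$ in the $z=5$ case.

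The main obstacle will be the tightness of the bound: the limiting value $\approx -1.155$ leaves only about $0.15$ of slack against the threshold $-1$, and at $k=2$ numerical experiments give $\partial_x f_{6,2}(\bar x_{6,2})\approx -1.05$, so \emph{every} estimate in the asymptotic-to-uniform transition must be sharp. In particular, crude bounds such as $r_k \in ((k-6^{1/5})/(k-1),1)$ inserted into $P_k$ will not suffice; one must genuinely exploit the implicit relation between $r_k$ and $r^*_k$, which is why the final verification reduces to a degree-$5$ polynomial positivity in $k$ of the same cumbersome flavor encountered in the $z=5$ case.
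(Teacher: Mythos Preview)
Your approach is correct in outline but considerably more involved than the paper's, and your dismissal of the framing bound is premature. The paper does exactly what you rule out: since $\partial_x f_{6,k}$ is decreasing then increasing on $(0,\nicefrac{1}{k}]$ (unique minimum at $x^\star_{6,k}$), the two-sided framing $\tilde x_{6,k}<\bar x_{6,k}<\tilde x_{6,k-1}$ from Lemma~\ref{framing} gives
\[
\partial_x f_{6,k}(\bar x_{6,k}) \le \max\bigl(\partial_x f_{6,k}(\tilde x_{6,k}),\ \partial_x f_{6,k}(\tilde x_{6,k-1})\bigr),
\]
and the paper shows \emph{both} endpoint values are below $-1$ for all $k\ge 4$ via explicit degree-$5$ polynomial inequalities in $k$ (after the shift $k\mapsto k+4$). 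Your observation that $\partial_x f_{6,2}(\tilde x_{6,2})\approx -0.66$ only tells you the \emph{lower} endpoint fails at $k=2$; the paper handles $k=2,3$ numerically, just as you propose. So the framing bound, used with both endpoints, is sharp enough and avoids your substitution $r=v/u$ entirely.

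Your route via $P_k(r_k)<0$ is legitimate, but note an arithmetic slip in your expansion of $r^*_k$: from $P_k(1)=12$ and $P_k'(1)=30k+15$ one gets $r^*_k\approx 1-\dfrac{12}{30k+15}=1-\dfrac{4}{10k+5}$, so the relevant comparison is $6^{1/5}-1\approx 0.431$ against $\nicefrac{2}{5}=0.4$, not against $\nicefrac{4}{15}\approx 0.267$. The asymptotic margin is therefore about $0.03$, much tighter than you indicate, which makes the ``quantitative expansion with explicit error control'' you postpone genuinely delicate. The paper's endpoint argument sidesteps this entirely by working with closed-form test points rather than the implicit $r_k$.
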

\begin{proof} 
We prove that $\partial_x f_{6,k}(\bar{x}_{6,k}) < -1$, using the fact that $\partial_x f_{6,k}(\bar{x}_{6,k}) \leq \max ( \partial_x f_{6,k}(\tilde{x}_{z,k}),$  $\partial_ x f_{6,k}(\tilde{x}_{z,k-1}) )$.


As for the case $z=5$, $\partial_x f_{6,k+4}(\tilde{x}_{6,k+4}) < -1$ is equivalent to 
\begin{eqnarray*}
(7-5\cdot6^{1/5})k^5 + (125-75\cdot6^{1/5}-10\cdot6^{2/5})k^4 + (900-450\cdot6^{1/5}-120\cdot6^{2/5}-10\cdot6^{3/5})k^3 &\\
+(3265-1350\cdot6^{1/5}-540\cdot6^{2/5}-90\cdot6^{3/5}-5\cdot6^{4/5})k^2 &\\
+ (5960-2025\cdot6^{1/5}-1080\cdot6^{2/5}-270\cdot6^{3/5}-30\cdot6^{4/5})k & \\
+ (4373-1215\cdot6^{1/5}-810\cdot6^{2/5}-270\cdot6^{3/5}-45\cdot6^{4/5}) & < 0,
\end{eqnarray*}
which is true for $k \geq 0$. So $\partial_x f_{6,k}(\tilde{x}_{6,k}) < -1$ for all $k \geq 4$.

A similar computation for $\partial_x f_{6,k+4}(\tilde{x}_{6,k+3})$ leads to $\partial_x f_{6,k}(\tilde{x}_{6,k-1}) < -1$ for all $k \geq 4$. So $\partial_x f_{6,k}(\bar{x}_{6,k}) < -1$ for $k \geq 4$.


A numerical computation of $\partial_x f_{6,k}(\bar{x}_{6,k})$ for $k = 2$ and $3$ yields values less than $-1$, which concludes the proof.
\end{proof}\hfill$\finpreuvem$\\

\begin{Rem}
Note that as $\partial_x f(x^\star_{z,k}) > -1$ for $z \in \{3,4\}$, the fonction $f_{z,k}$ is contracting on $[\hat{x}_{z,k},\nicefrac{1}{k}]$. This gives the main ingredient to an easier way to prove the convergence of the uniform case than in Proposition \ref{CV_attractive}. However, the given proof of Proposition \ref{CV_attractive} is more general as it only requires the fixed point to be linearly attracting.
\end{Rem}

Lemma \ref{fixed345_attrac} in addition to Proposition \ref{CV_attractive} gives the convergence in the uniform case for $z \in \{3,4,5\}$. The following Proposition extends this result to the non uniform case.  

\begin{prop}\label{nonunif_CV}
Assume that, for the uniform case of the $z$-ary tree, $\bp(n)$ converges to $(\bar x_{z,k},\dots,$ $\bar x_{z,k},1-k\bar x_{z,k})$. Then, in the non uniform case with $\bp_1=\dots=\bp_i>\bp_{i+1}\geq\dots\geq \bp_{k}$, $\bp(n)$ converges to $(\bar x_{z,i},\dots, \bar x_{z,i}, 0_{k-i}, 1-i\bar x_{z,i})$.
\end{prop}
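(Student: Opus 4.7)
The plan is to combine three tools already in the paper: the extinction of minor diseases from Lemma \ref{zero}, the linear-attractor property of the reduced fixed point given by Proposition \ref{dim1attrac}, and a non-autonomous perturbation argument that drives the dominant coordinate $\bp_1(n)$ into the basin of that attractor.

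First, I would exploit the symmetry of $F$: from $\bp_1=\cdots=\bp_i$ a direct induction yields $\bp_1(n)=\cdots=\bp_i(n)$ for every $n$, so the problem reduces to studying $\tilde\bp(n)=(\bp_1(n),\bp_{i+1}(n),\dots,\bp_k(n))$ under the dynamics $\tilde F$. Setting $s(n):=\sum_{j>i}\bp_j(n)$, Lemma \ref{zero} forces $s(n)\to 0$, and the first coordinate then satisfies
\[
\bp_1(n+1)=g_n(\bp_1(n)),\qquad g_n(x):=(1-(i-1)x-s(n))^z-(1-ix-s(n))^z,
\]
with $g_n\to f_{z,i}$ uniformly on $[0,1/i]$. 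Next, Proposition \ref{dim1attrac} tells us that ${\bf{\bar x}}=(\bar x_{z,i},0_{k-i})$ is a linear attractor of $\tilde F$: there exists an open neighborhood $U$ of ${\bf{\bar x}}$ such that every $\tilde F$-trajectory entering $U$ converges to ${\bf{\bar x}}$. Since Lemma \ref{zero} already pushes the last $k-i$ coordinates to $0$, it then suffices to produce some $n_0$ for which $|\bp_1(n_0)-\bar x_{z,i}|$ is small enough to place $\tilde\bp(n_0)$ inside $U$.

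To reach $U$ I would invoke the assumed convergence $f_{z,i}^n(y)\to\bar x_{z,i}$ for every $y\in(0,1/i]$, upgraded to uniform convergence on the compact subinterval $[\alpha,1/i]$ provided by Lemma \ref{zeroplus}. Given $\epsilon>0$, one picks $M$ with $|f_{z,i}^M(y)-\bar x_{z,i}|<\epsilon/2$ for every $y\in[\alpha,1/i]$, and then uses $g_n\to f_{z,i}$ uniformly together with the equi-Lipschitz character of these maps on $[0,1/i]$ to obtain
\[
\sup_{y\in[0,1/i]}\bigl|g_{n+M-1}\circ\cdots\circ g_n(y)-f_{z,i}^M(y)\bigr|\xrightarrow[n\to\infty]{}0,
\]
so that $|\bp_1(n+M)-\bar x_{z,i}|<\epsilon$ for all $n$ large enough. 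Combined with $s(n+M)\to 0$ this places $\tilde\bp(n+M)$ in $U$, and the linear-attractor property concludes.

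The delicate step is the passage from pointwise to uniform convergence of $f_{z,i}^n$ on the compact interval $[\alpha,1/i]$. I would derive it from the linear-attractor nature of $\bar x_{z,i}$ under $f_{z,i}$, which yields uniform exponential contraction in a small neighborhood of $\bar x_{z,i}$, combined with the variational picture of Figure \ref{TabVarF} and the continuity of $f_{z,i}$ to bound uniformly in $y\in[\alpha,1/i]$ the entry time into that neighborhood.
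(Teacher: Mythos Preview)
Your argument is essentially correct and shares its skeleton with the paper's proof: both reduce to the truncated map $\tilde F$, invoke Lemma~\ref{zero} to kill the minor coordinates, Lemma~\ref{zeroplus} to confine $\bp_1(n)$ to $[\alpha,\nicefrac{1}{i}]$, upgrade the pointwise convergence $f_{z,i}^n\to\bar x_{z,i}$ to a uniform statement on that compact interval, and then compare the true orbit with the ``uniform'' orbit $(f_{z,i}^n(\bp_1),0_{k-i})$.

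There are, however, two differences worth noting. First, you route the argument through Proposition~\ref{dim1attrac}, which requires $\bar x_{z,i}$ to be \emph{linearly} attracting for $f_{z,i}$; the hypothesis of Proposition~\ref{nonunif_CV} is only that the uniform dynamics converges, so strictly speaking you are proving a weaker statement (though one sufficient for Theorem~\ref{second}). The paper avoids this by obtaining the uniform index $N$ via a finite open cover of $[\alpha,\nicefrac{1}{i}]$ by the sets $E_n=\{x:f_{z,i}^n(x)\in B(\bar x_{z,i},\varepsilon/2)\}$ and then using only the uniform continuity of $\tilde F^N$ on a compact set---no spectral information is needed. Second, your final step (entering the attracting neighborhood $U$ of $(\bar x_{z,i},0_{k-i})$ and invoking Proposition~\ref{dim1attrac}) is in fact redundant: your own non-autonomous estimate already yields $|\bp_1(n+M)-\bar x_{z,i}|<\varepsilon$ for all large $n$ and every $\varepsilon>0$, which together with $s(n)\to 0$ is precisely the convergence $\tilde\bp(n)\to(\bar x_{z,i},0_{k-i})$. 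So the linear-attractor machinery can be dropped entirely, and what remains is the paper's proof phrased through the one-dimensional compositions $g_{n+M-1}\circ\cdots\circ g_n$ rather than through $\tilde F^N$ directly.
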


\begin{proof}
Assume that in the uniform case $\bp(n)$ converges. Obviously if $\bp_1=\dots=\bp_i$ and $\bp_j=0$ otherwise, $\bp(n)$ converges to $(\bar{x}_{z,i},\dots, \bar{x}_{z,i}, 0_{k-i}, 1-i\bar{x}_{z,i})$, where $\bar{x}_{z,i}$ is the fixed point of $f_{z,i}$. We want to extend this result to the case $\bp_1=\bp_i>\bp_{i+1}\geq\dots\geq \bp_{k}$.

We introduce the function $\tilde{F} = (\tilde{F}_1,\dots,\tilde{F}_{k-i+1}) = (F_1,F_{i+1},\dots, F_k)$ which is a truncated version of $F$
\[ \tilde{F}_\ell(x)=\left\lbrace\begin{array}{ll}\left(1-(i-1)x_1-\sum_{j=2}^{k-i+1}x_j\right)^z-\left(1-ix_1-\sum_{j=2}^{k-i+1}x_j\right)^z&\mbox{ if $\ell=1$}\\
\left(1-ix_1-\sum_{j=2,j\neq\ell}^{k-i+1}x_j\right)^z-\left(1-ix_1-\sum_{j=2}^{k-i+1}x_j\right)^z&\mbox{ if $\ell = 2,\cdots,k-i+1$,}
\end{array}\right.\]
defined in the set $\mathscr P_{k,i}:=\lbrace (x_1,\dots,x_{k-i+1})\in\mathbb R^{k-i+1}, x_1>x_2\geq x_3\geq\dots \geq x_{k-i+1}>0, ix_1+\sum_{j=2}^{k-i+1}x_j<1 \rbrace$. So, equivalently to our convergence result, we show that, for ${\bf x} \in \mathscr P_{k,i}$, $\tilde{F}^n({\bf x})$ converges to ${\bf \bar{x}_i} = (\bar{x}_{z,i}, 0_{k-i})$.

Let $\varepsilon > 0$ and $\bp=(\bp_1,\dots,\bp_{k-i+1})\in\mathscr P_{k,i}$. From lemma \ref{zeroplus}, we have that, for all $n \in \N$, $\bp_1(n) \in [\alpha,\nicefrac{1}{i}]$. Now, while noting that $\tilde{F}^n (x,0_{k-i}) = (f_{z,i}^n(x),0_{k-i})$, we define $E_n=\lbrace x\in[\alpha,\nicefrac{1}{i}], f_{z,i}^n(x)\in B(\bar{x}_{z,i},\nicefrac{\varepsilon}{2})\rbrace$. Clearly, from the convergence of the uniform case
\[[\alpha,\nicefrac{1}{i}]\subset \cup_{n\geq 0} E_n. \]
As the inverse image of an open set by a continuous fonction, $E_n$ is also an open set for all $n\in\mathbb N$. Since $\cup_{n\geq0}E_n$ is a sequence of open sets covering the compact $[\alpha,\nicefrac{1}{i}]$, there exists $N$ such that  
\[[\alpha,\nicefrac{1}{i}]\subset \cup_{n= 0}^N E_n \]
implying that $\forall x\in[\alpha,\nicefrac{1}{i}], \, \tilde{F}^N(x,0_{k-i})\in B(\bar{x}_{z,i},\nicefrac{\varepsilon}{2}) \times \{0\}^{k-i} \subset B({\bf{\bar{x}_i}},\nicefrac{\varepsilon}{2})$.

On the closed set $\mathscr  G:=[\alpha,\nicefrac{1}{i}]\times \mathbb R_+^{k-i}\cap \overline{\mathscr P_{k,i}}$, $\tilde{F}^N$ is uniformly continuous and thus there exists $\delta>0$ such that 
\[\forall (x,y),(x^\prime,y^\prime)\in\mathscr G, \Vert (x,y)-(x^\prime,y^\prime)\Vert \leq \delta \Rightarrow \Vert \tilde{F}^{N}(x,y)-\tilde{F}^N(x^\prime,y^\prime) \Vert\leq \nicefrac{\varepsilon}{2} \]  
According to lemma \ref{zero},  $\bp_{j}(n)=F^n_j(\bp)\rightarrow 0, \forall j>i$, and consequently there exists $N_1\in\mathbb N$ such that  if $n\geq N_1$, $ \Vert (\bp_{i+1}(n),\dots, \bp_k(n))\Vert\leq \delta$ implying
\[  
\Vert \tilde{F}^{N}(\bp_1(n),\bp_{i+1}(n),\dots,\bp_{k}(n))-\tilde{F}^N(\bp_1(n),0_{k-i}) \Vert\leq \nicefrac{\varepsilon}{2}.\]
Thus, recalling that according to lemma \ref{zeroplus}, $\bp_1(n)\in[\alpha,\nicefrac{1}{i}]$ 
\begin{eqnarray*}
\Vert(\bp_1(n+N),\bp_{i+1}(n+N),\dots,\bp_{k}(n+N))-{\bf{\bar x}}\Vert=\Vert \tilde{F}^{N}(\bp_1(n),\bp_{i+1}(n),\dots,\bp_{k}(n))-{\bf{\bar x}}\Vert\\
\leq\Vert \tilde{F}^{N}(\bp_1(n),\bp_{i+1}(n),\dots,\bp_{k}(n))-\tilde{F}^{N}(\bp_1(n),0_{k-i})\Vert+\Vert \tilde{F}^{N}(\bp_1(n),0_{k-i})-{\bf{\bar x}}\Vert\leq\nicefrac{\varepsilon}{2}+\nicefrac{\varepsilon}{2}=\varepsilon.
\end{eqnarray*}

\end{proof}\hfill$\finpreuvem$

\section{A Specific case of Repelling point and attracting orbit}\label{s_z6_i2} 
In all this section we study the case of a 6-ary tree with 2 dominant diseases. According to lemma \ref{z6_repel}, the convergence  to the fixed point is no longer true and in the present section we prove the existence and uniqueness of an attracting orbit of prime period 2. \\
For the sake of clarity, we write $\bar x$ instead of $\bar x_{6,2}$ and likewise for $f,\, F,\, x^\star,\, \hat x,\,\tilde x$.
Moreover, an easy fact is the following

\begin{Rem}\label{ordre}
If $y$ is a fixed of $f^2$ but not of $f$, then $f(y)$ is a fixed point of $f^2$ distinct of $y$ and
\begin{equation}\label{ordre2}
(f^2)^\prime(y)=(f^2)^\prime(f(y)).
\end{equation} 
\end{Rem}
\begin{figure}[h!]
\caption{$z=6,i=2$, $f$ and $f^2$ } \label{z6i2} 
\includegraphics[width=7cm]{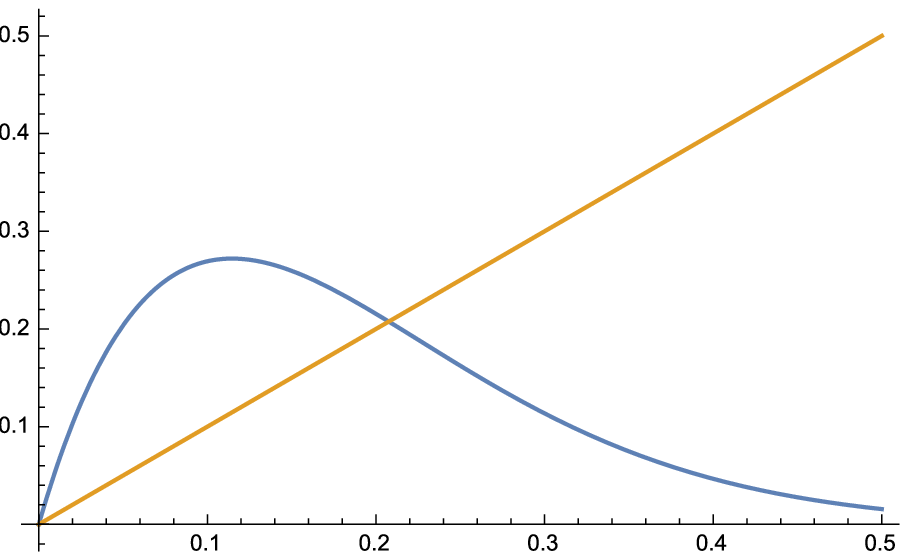}
\includegraphics[width=7cm]{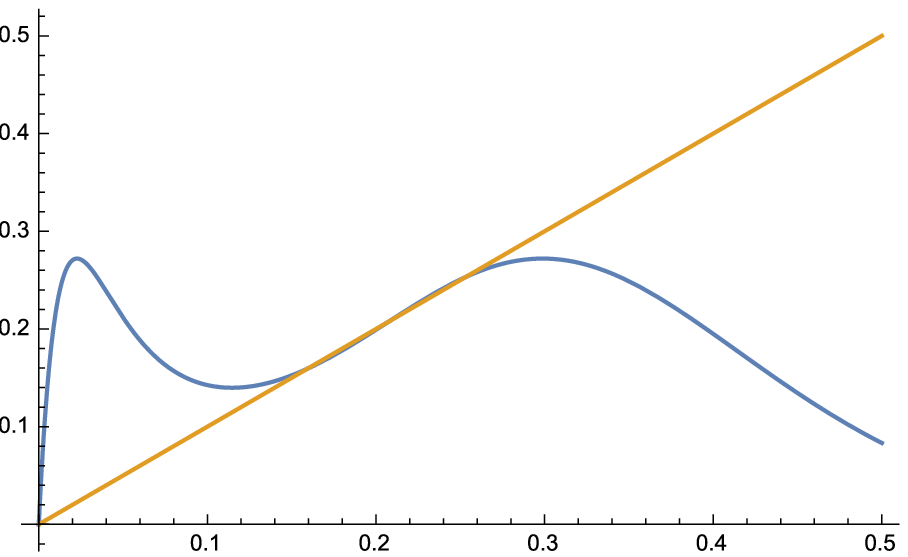}
\end{figure}

\begin{lem}\label{fixe}
All the fixed points of $f^2$ are in $(\hat x, \hat x_r)$ where $\hat x_r:=\sup\lbrace x\in[0,\nicefrac{1}{2}], f(x)=\hat x\rbrace$ and recall that $\hat x={\arg\!\max}_{[0,\nicefrac{1}{2}]}f(x)$. \end{lem}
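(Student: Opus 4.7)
The plan is to reuse the structural analysis of $f^2$ carried out in the proof of Proposition \ref{CV_attractive}, which the authors note only requires $\hat x<\bar x$; this condition holds here because $f'(\bar x)<-1$ by Lemma \ref{z6_repel} whereas $f'<0$ only on $(\hat x,\nicefrac{1}{2}]$. From that analysis, $f^2$ is concave and increasing on $[0,\hat x_\ell]$, decreasing on $[\hat x_\ell,\hat x]$, increasing on $[\hat x,\hat x_r]$, and decreasing on $[\hat x_r,\nicefrac{1}{2}]$. I would then rule out fixed points of $f^2$ on each of the closed sub-intervals $[0,\hat x_\ell]$, $[\hat x_\ell,\hat x]$ and $[\hat x_r,\nicefrac{1}{2}]$, and at the two endpoints $\hat x$ and $\hat x_r$.

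On $[0,\hat x_\ell]$ this follows from concavity together with $f^2(0)=0$ and $f^2(\hat x_\ell)=f(\hat x)>\hat x>\hat x_\ell$: the chord from $(0,0)$ to $(\hat x_\ell,f(\hat x))$ has slope greater than $1$, and by concavity $f^2$ lies above this chord, so $f^2(x)>x$ on $(0,\hat x_\ell]$. The analysis of the remaining two intervals and of the endpoints reduces entirely to the single inequality $f^2(\hat x)>\hat x$. Indeed, once this is established, on $[\hat x_\ell,\hat x]$ monotonicity of $f^2$ yields $f^2(x)\geq f^2(\hat x)>\hat x\geq x$; and on $[\hat x_r,\nicefrac{1}{2}]$ one notes $f^2(\hat x_r)=f(\hat x)$ and uses the equivalence $f^2(\hat x)>\hat x\ \Longleftrightarrow\ f(\hat x)<\hat x_r$ (which follows from $f$ being decreasing on $(\hat x,\nicefrac{1}{2})$ with $f(\hat x_r)=\hat x$) to get $f^2(\hat x_r)<\hat x_r$, hence $f^2(x)\leq f^2(\hat x_r)<\hat x_r\leq x$ by monotonicity. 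The endpoints $\hat x$ and $\hat x_r$ are excluded by the strictness of these same inequalities.

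The main obstacle is therefore proving $f^2(\hat x)>\hat x$. For this I would exploit the closed-form $\hat x=(a-1)/(2a-1)$ with $a:=2^{1/5}$ read off from \eqref{maximum_inflexion}. A short simplification using $1-\hat x=a/(2a-1)$, $1-2\hat x=1/(2a-1)$ and $a^6=2a$ (since $a^5=2$) yields the striking identity $f(\hat x)=1/(2a-1)^5$; after which $f^2(\hat x)=f\bigl(1/(2a-1)^5\bigr)$ can be expanded and compared with $(a-1)/(2a-1)$. This comparison reduces to an explicit polynomial inequality in $a$ constrained by $a^5=2$, verifiable directly. This computational check is the crux: while the geometric/monotonicity reasoning is clean, the final inequality does not appear to follow from the variations of $f^2$ alone, and seems to require a concrete input of this kind.
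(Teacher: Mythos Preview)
Your argument is correct, and it reduces the lemma cleanly to the single numerical inequality $f^2(\hat x)>\hat x$, which you then verify via the closed form $\hat x=(a-1)/(2a-1)$, $f(\hat x)=1/(2a-1)^5$ with $a=2^{1/5}$. The paper's proof reaches the same crux (it simply says ``algebraic computations give $f^2(\hat x)-\hat x>0$''), but the route differs in two places. On $(0,\hat x_\ell]$ the paper does not invoke the concavity of $f^2$; instead it studies the sign of $(f^2-f)'$ to show $f^2\geq f$ and then uses $f>x$. On $[\hat x_r,\nicefrac{1}{2}]$ the paper argues by contradiction: if $y$ were a fixed point of $f^2$ there, then $f(y)$ would be another fixed point with $(f^2)'(f(y))=(f^2)'(y)<0$, forcing $f(y)\in(\hat x_\ell,\hat x)$, which has already been excluded. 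Your approach---using the equivalence $f^2(\hat x)>\hat x\Leftrightarrow f(\hat x)<\hat x_r$ to get $f^2(\hat x_r)<\hat x_r$ directly---is shorter and makes the dependence on the single inequality more transparent; the paper's pairing argument, on the other hand, highlights a structural reason (fixed points of $f^2$ come in pairs $\{y,f(y)\}$) that carries over to other repelling cases. Either way the substantive content is the same.
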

\begin{figure}[h!]
\caption{Table of variations of $f^2$} \label{TabVarF2}
\vspace*{-0.5cm}
\begin{center}
\includegraphics[width=12cm]{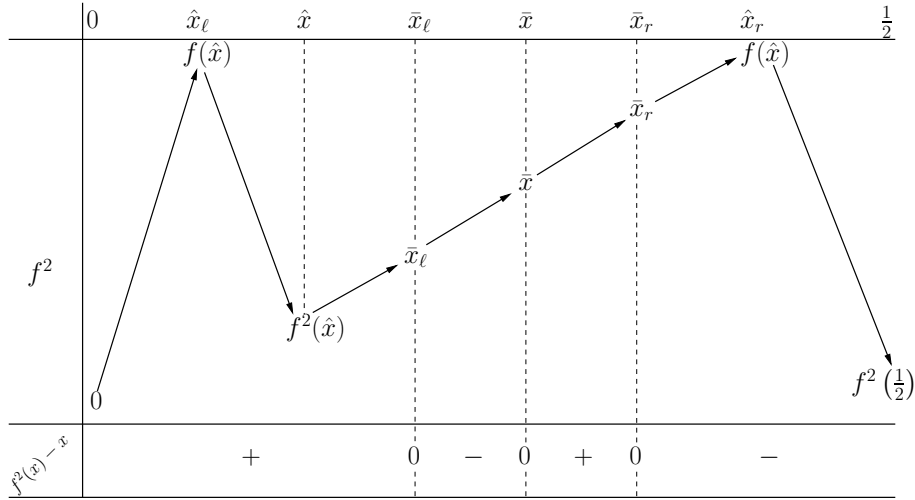}
\end{center}
\end{figure}
\begin{proof}
 First note that $\hat x<\bar x$ 
 and:
\begin{equation}\label{deriv}
f^\prime(x)\geq0\Leftrightarrow x\leq 
\hat x.
\end{equation}
As $\hat x=\arg\!\max_{x\in[0,\nicefrac{1}{2}]}f(x)$, $f([0,\nicefrac{1}{2}])=[0,f(\hat x)]$ implying that $\bar x\in[0,f(\hat x)]$ and 
the existence of  $\hat x_\ell\in[0,\hat x)$ such that $f(\hat x_\ell)=\hat x$ and $f^2(\hat x_\ell)=f(\hat x)=\max_{x\in[0,\nicefrac{1}{2}]} f^2(x).$\\
Moreover note that 
\[f^{\prime\prime}(x)\geq 0\Leftrightarrow x\geq 
x^{\star}, \]
$x^{\star}$ is the only inflexion point of $f$ implying that $f^\prime$ is decreasing on $[0,\hat x]$ as $x^\star>\hat x$.\\
First we show that there is no fixed point on $(0,\hat x)$ and we split this in two cases: $(0,\hat x_\ell]$ and $[\hat x_\ell, \hat x)$.\\
\underline{On $(0,\hat x_\ell]$:}\\ 
Using  \eqref{deriv}, on this interval: 
\begin{equation}\label{positi}
\mathrm{sgn}(f^2(x)-f(x))^\prime=\mathrm{sgn}(f^\prime(x)(f^\prime(f(x))-1))=\mathrm{sgn}(f^\prime(f(x))-1).
\end{equation}
$f^\prime$ being decreasing on $[0,\hat x]$, $f^\prime(f(x))-1$ also on $[0,\hat x_\ell)$, and as $f^\prime(f(0))-1=f^\prime(0)-1=5$ and $f^\prime(f(\hat x_\ell))-1=f^\prime(\hat x)-1=-1$, there exists $\beta\in(0,\hat x_\ell)$ such that $f^2-f$ is non decreasing on $(0,\beta)$ and decreasing otherwise. 
Since $f^2(0)-f(0)=0$ and $f^2(\hat x_\ell)-f(\hat x_\ell)=f(\hat x)-\hat x>0$, $f^2(x)\geq f(x)$ on $[0,\hat x_\ell]$. Thus, as $f(x)>x$ on $(0,\hat x_\ell)$, $f^2$ do not admit a fixed point on this interval.\\
\underline{On $(\hat x_\ell,\hat x)$:}\\
 $f^\prime(x)>0$ and as $f(x)\in(\hat x, f(\hat x)),\, f^\prime(f(x))<0$:
\[\left(f^2(x)-x\right)^\prime=f^\prime(x)f^\prime(f(x))-1<0.\]
Thus $f^2(x)-x$ is decreasing on $(\hat x_\ell,\hat x)$ and, as algebraic computations give $f^2(\hat x)-\hat x>0$, $f^2(x)=x$ admits no solution on $(\hat x_\ell, \hat x]$. \\
In a second time we make a similar reasoning on $[\hat x_r, \nicefrac{1}{2}]$. The existence of $\hat x_r$ is clear noting that $f(\nicefrac{1}{2})=\nicefrac{1}{2^6}<\hat x$.\\
\underline{On $[\hat x_r,\nicefrac{1}{2})$:}\\
$f^\prime(x)<0$ and $f([\hat x_r, \nicefrac{1}{2}])=[f(\nicefrac{1}{2}),\hat x]$. For all $y\in(f(\nicefrac{1}{2}),\hat x)$, $f^\prime(y)>0$, thus:
\[\forall x\in (\hat x_r,\nicefrac{1}{2}), (f^2)^\prime(x)<0.\]
As a result $f^2(x)-x$ is decreasing on $ [\hat x_r,\nicefrac{1}{2})$ and as $f^2(\nicefrac{1}{2})<\nicefrac{1}{2}$, $f^2$ has a unique fixed point on this interval if and only if $f^2(\hat x_r)\geq \hat x_r$.\\
Assume that $f(\hat x_r)\geq \hat x_r$ and denote by $y\in[\hat x_r,\nicefrac{1}{2})$ a fixed point of $f^2$. According to remark \ref{ordre}, $f(y)$ is also a fixed point of $f^2$ (not of $f$) and  
\begin{equation}\label{otherpoint}
(f^2)^\prime(f(y))=(f^2)^\prime(y)<0.
\end{equation}
But as $f^2$ is increasing on $(0,\hat x_\ell)\cup(\hat x,\hat x_r)$ and not increasing elsewhere, \eqref{otherpoint} implies that $f(y)\in (\hat x_\ell, \hat x)$, which is a contradiction as we have proved that there is no fixed point on this interval.\\
Consequently all the fixed points of $f^2$ are on $(\hat x,\hat x_r)$.
\hfill$\finpreuvem$
\begin{figure}[h!]
\caption{$z=6,i=2$ zoom on the fixed points of $f^2$}
\begin{center}
\includegraphics[width=7cm]{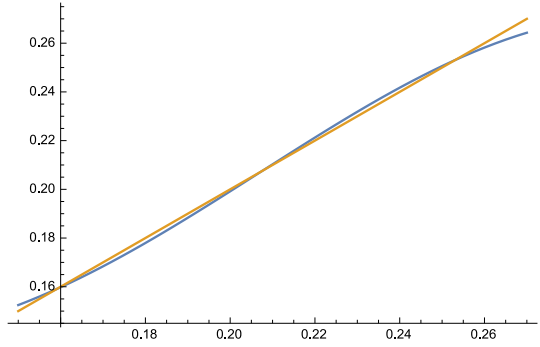}
\end{center}
\end{figure}
\end{proof}

\begin{lem}\label{exact2}
There exists exactly two fixed points of $f^2$ distinct of $\bar x$. They are both attracting.
\end{lem}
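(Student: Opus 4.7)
The plan is to locate all fixed points of $f^2$ in the interval $(\hat x, \hat x_r)$ (to which Lemma \ref{fixe} restricts them) and then determine their nature via the sign of $(f^2)' - 1$ at those points. The main tool is the convex-then-concave structure of $f^2$ on $[\hat x, \hat x_r]$ established in the proof of Proposition \ref{CV_attractive}.

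First, I would describe the shape of $(f^2)'$ on $[\hat x, \hat x_r]$. At the endpoints, $f'(\hat x) = 0$ and $f(\hat x_r) = \hat x$ imply $(f^2)'(\hat x) = (f^2)'(\hat x_r) = 0$. On the open interval, $f$ maps $(\hat x, \hat x_r)$ into itself, so $f'(x)$ and $f'(f(x))$ are both strictly negative and $(f^2)'(x) > 0$. The convex-then-concave structure of $f^2$ further shows that $(f^2)'$ is strictly increasing then strictly decreasing on $[\hat x, \hat x_r]$, attaining its maximum at the unique inflection point $y^\star$.

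Second, since $\bar x$ is repelling (Lemma \ref{z6_repel}), $(f^2)'(\bar x) = (f'(\bar x))^2 > 1$. Combined with the unimodal shape above, this forces $(f^2)'(x) = 1$ at exactly two points $x_1 < x_2$ of $(\hat x, \hat x_r)$, with $\bar x \in (x_1, x_2)$. Setting $g := f^2 - \mathrm{id}$, it follows that $g$ is strictly decreasing on $(\hat x, x_1)$ and on $(x_2, \hat x_r)$ and strictly increasing on $(x_1, x_2)$, hence $g$ has at most three zeros on $(\hat x, \hat x_r)$. I would then invoke $g(\hat x) = f^2(\hat x) - \hat x > 0$ (the corollary noted at the end of the proof of Proposition \ref{CV_attractive}), $g(\bar x) = 0$, and $g(\hat x_r) < 0$ (which follows from Lemma \ref{fixe}: an opposite inequality combined with $f^2(\nicefrac12) < \nicefrac12$ and the monotonicity of $f^2$ on $[\hat x_r, \nicefrac12]$ would produce a fixed point of $f^2$ outside $(\hat x, \hat x_r)$). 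The resulting sign pattern forces exactly one zero of $g$ in each of $(\hat x, x_1)$, $(x_1, x_2)$ and $(x_2, \hat x_r)$, giving two fixed points $\bar x_\ell$ and $\bar x_r$ of $f^2$ distinct from $\bar x$, and no more.

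Finally, $\bar x_\ell \in (\hat x, x_1)$ lies in a region where $g$ is strictly decreasing, so $g'(\bar x_\ell) < 0$, i.e. $(f^2)'(\bar x_\ell) < 1$; the same reasoning on $(x_2, \hat x_r)$ yields $(f^2)'(\bar x_r) < 1$ (or, equivalently, Remark \ref{ordre} provides $(f^2)'(\bar x_r) = (f^2)'(f(\bar x_\ell)) = (f^2)'(\bar x_\ell)$). Together with the positivity of $(f^2)'$ on $(\hat x, \hat x_r)$ from the first step, both derivatives lie in $(0,1)$, which gives the attracting character of both points. The main technical obstacle I anticipate is the clean extraction of $g(\hat x_r) < 0$ from Lemma \ref{fixe}; once this sign is in hand, everything else flows cleanly from the unimodal shape of $(f^2)'$ forced by the convex-then-concave structure of $f^2$ on $[\hat x, \hat x_r]$.
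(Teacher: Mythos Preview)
Your proof is essentially the paper's argument, organized a bit more explicitly around the unimodality of $(f^2)'$ on $[\hat x,\hat x_r]$. The paper proceeds in the same order: it uses $(f^2)'(\bar x)=(f'(\bar x))^2>1$ together with $f^2(\hat x)>\hat x$ and $f^2(\hat x_r)<\hat x_r$ to produce at least two extra fixed points on either side of $\bar x$, then invokes the single inflection point of $f^2$ on $[\hat x,\hat x_r]$ to cap the count at three and to read off that the two new points are attracting.

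One citation needs fixing. You justify $f^2(\hat x)>\hat x$ via the corollary at the end of the proof of Proposition~\ref{CV_attractive}, but that remark is deduced \emph{from} the uniqueness of the fixed point of $f^2$ in the attracting regime --- precisely what fails here, where you are about to exhibit three fixed points. In the present setting ($z=6$, $i=2$) the inequality $f^2(\hat x)>\hat x$ is instead checked directly (``algebraic computations'') inside the proof of Lemma~\ref{fixe}, and that is the reference you should use. By contrast, the convex-then-concave shape of $f^2$ on $[\hat x,\hat x_r]$ that you borrow from Proposition~\ref{CV_attractive} does not rely on the attracting hypothesis, so that reference is safe; the paper's own proof of Lemma~\ref{exact2} re-invokes exactly this one-inflection-point fact.
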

\begin{proof}
Noting that $f^\prime(\bar x)<-1$, $\bar x$ is a repelling fixed point of $f^2$ such that
\[(f^2)^\prime(\bar x)=(f^\prime(\bar x))^2>1.\]
So there exists $\varepsilon>0$ such that $f^2(\bar x-\varepsilon)<\bar x-\varepsilon$ and $\bar x+\varepsilon<f^2(\bar x+\varepsilon)$. As $f^2(\hat x)>\hat x$ and $f^2(\hat x_r)<\hat x_r$,  there are at least two additional fixed points on  $(\hat x,\hat x_r)$. Moreover the fact that $f^2$ is increasing on this interval ensures the existence of two attracting fixed point from either side of $\bar x$.\\
We have already seen that the fixed points of $f^2$ are in $[\hat x,\hat x_r]$ and  
moreover a study of the sign of $(f^2)^{\prime\prime}$ shows that $f^2$ admits exactly one inflection point in $[\hat x,\hat x_r]$.
As a result $f^2$ has three fixed points in $(0,\nicefrac{1}{2}]$.
\end{proof}\hfill$\finpreuvem$\\

\begin{prop}\label{Leb}
In the uniform case, if $\lambda$ is the Lebesgue measure on $\mathbb R$:
\[\lambda\left(\left\lbrace \bp_1\in[0,\nicefrac{1}{2}],\, \bp(n)\underset{n\rightarrow+\infty}{\longrightarrow} \lbrace\bar x_\ell, \bar x_r\rbrace\right\rbrace\right)=\nicefrac{1}{2}.\]

\end{prop}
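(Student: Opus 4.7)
The plan is to isolate an absorbing interval $I := [\hat{x}, \hat{x}_r]$ on which $f^2$ is monotone and has three fixed points $\bar{x}_\ell < \bar{x} < \bar{x}_r$, to show that every orbit entering $I$ converges to the $2$-cycle $\{\bar{x}_\ell, \bar{x}_r\}$ unless it eventually lands on $\bar{x}$, and then to prove that the set of initial conditions whose orbit never enters $I$ is Lebesgue-null.

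First I would verify the invariance of $I$: combining $f(\hat{x}_r) = \hat{x}$ (definition) with $f^2(\hat{x}) > \hat{x}$ (shown in the proof of Lemma~\ref{fixe}) and the monotonicity of $f$ past $\hat{x}$ yields $f(\hat{x}) < \hat{x}_r$, hence $f(I) = [\hat{x}, f(\hat{x})] \subset I$. On $I$, the table of variations of $f^2$ together with Lemma~\ref{exact2} gives that $f^2$ is monotone increasing with exactly the three fixed points $\bar{x}_\ell < \bar{x} < \bar{x}_r$, the outer two being attracting and $\bar{x}$ being repelling. The monotonicity then forces $f^{2n}(x)$ to be a monotone bounded sequence for every $x \in I \setminus \{\bar{x}\}$, converging to $\bar{x}_\ell$ if $x < \bar{x}$ and to $\bar{x}_r$ if $x > \bar{x}$. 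Since $f(\bar{x}_\ell) = \bar{x}_r$, this is precisely $\bp(n) \to \{\bar{x}_\ell, \bar{x}_r\}$.

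Second, I would characterise the bad set $E := \{x \in [0, \nicefrac{1}{2}] : f^n(x) \notin I \text{ for all } n \geq 0\}$. Since $f([0, \nicefrac{1}{2}]) \subseteq [0, f(\hat{x})] \subset [0, \hat{x}_r)$, membership in $E$ is equivalent, for $n \geq 1$, to $f^n(x) \in [0, \hat{x})$, and the monotonicity of $f$ on $[0, \hat{x}]$ together with $f(\hat{x}_\ell) = \hat{x}$ reduces this to $f^n(x) \in [0, \hat{x}_\ell)$ for $n \geq 1$. Define a gate sequence by $y_0 := \hat{x}_\ell$ and $y_n := $ the unique preimage of $y_{n-1}$ in $[0, \hat{x}_\ell]$ along the monotone branch (well defined since $f$ is increasing from $0$ to $\hat{x}$ there). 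Then $E \cap [0, \hat{x}_\ell) = \bigcap_n [0, y_n)$, and $(y_n)$ is a strictly decreasing positive sequence whose limit is a fixed point of $f$ in $[0, \hat{x}_\ell]$; the only such fixed point is $0$, since $\bar{x} > \hat{x} > \hat{x}_\ell$, so this intersection equals $\{0\}$. Points in $(\hat{x}_r, \nicefrac{1}{2}]$ land in $[0, \hat{x})$ after one iterate, and the elementary check that $f > 0$ on $(0, \nicefrac{1}{2}]$ rules out their first iterate being $0$; hence $E = \{0\}$.

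Third, the only way an orbit entering $I$ can fail to converge to the $2$-cycle is to eventually land on the repelling fixed point $\bar{x}$. Since $f$ is a polynomial, each $f^{-n}(\bar{x})$ is finite, so $\bigcup_{n \geq 0} f^{-n}(\bar{x})$ is countable. Combining, the complement in $[0, \nicefrac{1}{2}]$ of the basin of $\{\bar{x}_\ell, \bar{x}_r\}$ is contained in the countable set $\{0\} \cup \bigcup_n f^{-n}(\bar{x})$, hence Lebesgue-null, yielding the claimed measure $\nicefrac{1}{2}$. The delicate step is the characterisation of $E$: the argument works here because missing $I$ forces the orbit into $[0, \hat{x}_\ell]$, and the absence of any fixed point of $f$ there other than $0$ lets the nested gate intervals shrink to a single point — this bypasses the need for Schwarzian derivative or distortion estimates that would otherwise be required to rule out exotic invariant sets.
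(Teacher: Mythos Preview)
Your argument is correct and rests on the same ingredients as the paper's proof (Lemmas \ref{fixe} and \ref{exact2}, the monotonicity of $f^2$ on $[\hat x,\hat x_r]$, and the absence of fixed points in $[0,\hat x_\ell]$), but the organisation is genuinely cleaner. The paper proceeds interval by interval, treating $(\bar x,\bar x_r)$, $[\bar x_r,\hat x_r]$, $(\hat x,\bar x)$, $(\hat x_\ell,\hat x)$, $[\hat x_r,\nicefrac{1}{2}]$ and $[0,\hat x_\ell]$ in turn, showing for each where $f^2$ sends it and invoking monotonicity to conclude. You instead isolate the single forward-invariant interval $I=[\hat x,\hat x_r]$, dispose of the dynamics inside $I$ in one stroke via the increasing nature of $f^2$ there, and then reduce the complement to the gate-sequence argument $y_n\downarrow 0$ on $[0,\hat x_\ell]$. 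What your packaging buys is that the invariance $f(I)\subset I$ (which you deduce from $f^2(\hat x)>\hat x$ and the decrease of $f$ past $\hat x$) replaces several of the paper's separate interval checks, and the gate-sequence formulation makes the escape from $[0,\hat x_\ell]$ explicit rather than the paper's ``by contradiction, $f^{2n}(x)$ would converge to a fixed point of $f^2$ in $[0,\hat x_\ell]$, of which there is none''. Both arguments ultimately identify the exceptional set as $\{0\}\cup\bigcup_n f^{-n}(\bar x)$, a countable and hence null set.
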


\begin{proof}
According to the lemmas \ref{fixe} and \ref{exact2}, $f^2$ has exactly three fixed points on $[\hat x,\hat x_r]$, denote $\bar x_\ell$ and $\bar x_r$ respectively the one in $(\hat x,\bar x)$ and the one in $(\bar x,\hat x_r)$. \\
\underline{On $(\bar x, \bar x_r)$:}\\
$f^2$ is increasing, $f^2((\bar x, \bar x_r))=(\bar x, \bar x_r)$ and $f^2(x)\geq x$ then $f^2(x_n)=x_{n+2}\geq x_n$. The sequence $(x_{2n})$ is increasing and bounded above by $\bar x_r$, thus it tends to a fixed point of $f^2$ in $(\bar x, \bar x_r]$, $\bar x_r$.\\
\underline{On $[\bar x_r,\hat x_r]$:}\\
 $f^2([\bar x_r,\hat x_r])=[\bar x_r,f(\hat x)]\subset [\bar x_r,\hat x_r)$. Moreover on this interval, $f^2(x)\leq x$ implying that $x_{2n}$ is decreasing and bounded below by $\bar x_r$, thus convergent to $\bar x_r$.\\
\underline{On $(\hat x, \bar x)$:}\\
the reasoning is the same as the one on $(\bar x, \hat x_r)$, and for all $x$ on this interval $x_{2n}\rightarrow \bar x_\ell$.\\
\underline{On $(\hat x_\ell,\hat x)$:}\\
there exists an unique $\bar x_1$ such that $f(\bar x_1)=\bar x$. Note that on this interval $f^2$ is decreasing implying that 
 \begin{itemize}
 \item if $x\in[\hat x_\ell, \bar x_1)$, $f^2(x)\in(\bar x, f(\hat x)]\subset (\bar x, \hat x_r)$ and $x_{2n}\rightarrow\bar x_r$;
 \item if $x\in(\bar x_1, \hat x]$, $f^2(x)\in[f^2(\hat x),\bar x)\subset (\hat x, \bar x)$ and then $x_{2n}\rightarrow\bar x_\ell$.
\end{itemize}
\underline{On $[\hat x_r,\nicefrac{1}{2}]$:}\\
there exists $\bar x_2$ such that $f(\bar x_2)=\bar x$. On this interval $f^2$ is decreasing and 
\[f^2([\hat x_r, \bar x_2))=(\bar x, f(\hat x)]\mbox{ and }f^2((\bar x_2, \nicefrac{1}{2}])=[f^2(\nicefrac{1}{2}),\bar x) \]
If $x\in[\hat x_r, \bar x_2)$, $f^2(x)\in(\bar x, f(\hat x)]$ and we are in one of the previous cases. Let us postpone the other case ($x \in (\bar{x}_2,\nicefrac{1}{2}]$).\\
\underline{On $[0,\hat x_\ell]$:}\\
 $f^2$ is increasing implying the existence of $\gamma$ such that $f^2(\gamma)=\hat x_\ell$. If $x\in[\gamma, \hat x_\ell]$, $f^2(x)\in[\hat x_\ell, f(\hat x)]$ and we are in one of the previous cases.\\
We can easily show by contradiction that for all $x\in[0,\gamma]$ there exists $n$ such that $f^{2n}(x)\in[\hat x_\ell, f(\hat x)]$. If it is not the case $f^{2n}(x)$ is an increasing sequence bounded above, and as a consequence it converges to a fixed point of $f^2$ in $[0,\hat{x}_\ell]$. According to lemma \ref{fixe}, there is no such point. There is however countably many points that converges to $\bar{x}$ (because $f^2([0,\gamma]) \subset [0,f(\hat{x})]$).\\
To conclude if $x\in(\bar x_2,\nicefrac{1}{2}]$, $f^2(x) \in [f^2(\nicefrac{1}{2}),\bar{x})$ and we are in one of the previous cases.
\end{proof}\hfill$\finpreuvem$\\

To obtain results for the non uniform case, we need the following:
\begin{lem}
$\lbrace (\bar x_\ell,\bar x_\ell, 0_{k-2},1-2\bar x_\ell), (\bar x_r,\bar x_r,0_{k-2},1-2\bar x_r)\rbrace$ is an attracting orbit for $F$ in $\mathbb R^{k+1}$.
\end{lem}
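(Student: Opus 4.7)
The plan is to imitate Proposition \ref{dim1attrac} applied to $F^2$ at the two period-$2$ points. Since the initial condition satisfies $\bp_1=\bp_2$, the symmetry $x_1=x_2$ is preserved by $F$, so it suffices to work with the truncated map $\tilde F:\R^{k-1}\to\R^{k-1}$ in variables $(x_1,x_3,\ldots,x_k)$ introduced in Proposition \ref{dim1attrac}. The orbit points become $\tilde P_\ell=(\bar x_\ell,0_{k-2})$ and $\tilde P_r=(\bar x_r,0_{k-2})$, and the target is to show that both are linearly attracting fixed points of $\tilde F^2$.

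By the chain rule, $J_{\tilde F^2}(\tilde P_\ell)=J_{\tilde F}(\tilde P_r)\,J_{\tilde F}(\tilde P_\ell)$, and by the computation carried out in Proposition \ref{dim1attrac} each factor is upper triangular with diagonal $(f'(\bar x_\bullet),\,6(1-2\bar x_\bullet)^5,\ldots,6(1-2\bar x_\bullet)^5)$, evaluated at $\bar x_\bullet\in\{\bar x_\ell,\bar x_r\}$. The product is therefore upper triangular, and its eigenvalues are the products of the corresponding diagonals: $(f^2)'(\bar x_\ell)=f'(\bar x_\ell)f'(\bar x_r)$ with multiplicity one, and $36\bigl((1-2\bar x_\ell)(1-2\bar x_r)\bigr)^5$ with multiplicity $k-2$ (corresponding to the minor-disease directions). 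The first eigenvalue lies in $(-1,1)$ by Lemma \ref{exact2}, since $\bar x_\ell$ and $\bar x_r$ are attracting fixed points of $f^2$.

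The main obstacle is to show that the second eigenvalue lies in $(0,1)$: the attracting character of the period-$2$ orbit of $f$ alone controls $|f'(\bar x_\ell)f'(\bar x_r)|$ but not the product $36\bigl((1-2\bar x_\ell)(1-2\bar x_r)\bigr)^5$ directly. The cleanest route is to verify that $\bar x_\ell,\bar x_r>\tilde x_{6,2}$, which, by the defining identity $6(1-2\tilde x_{6,2})^5=1$, immediately gives $6(1-2\bar x_\bullet)^5<1$ and hence the needed bound. Since $\bar x_r>\bar x>\tilde x_{6,2}$ by Lemma \ref{framing}, only the inequality $\bar x_\ell>\tilde x_{6,2}$ requires attention; by the monotonicity of $f^2$ on $(\hat x,\bar x)$ established in Lemma \ref{fixe}, it is in turn equivalent to $f^2(\tilde x_{6,2})>\tilde x_{6,2}$, which can be checked by a direct (albeit delicate, since the numerical margin is narrow) computation exploiting $6(1-2\tilde x_{6,2})^5=1$. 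Granted this bound, all eigenvalues of $J_{\tilde F^2}(\tilde P_\ell)$ lie in $(-1,1)$, so $\tilde P_\ell$ is linearly attracting for $\tilde F^2$, and by the same argument so is $\tilde P_r$; the orbit $\{\tilde P_\ell,\tilde P_r\}$ is therefore an attracting period-$2$ orbit for $\tilde F$, which via the natural inclusion yields the claimed attracting orbit for $F$ in $\R^{k+1}$.
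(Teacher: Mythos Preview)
Your proof is correct and follows essentially the same approach as the paper's: reduce to the truncated map $\tilde F$, compute the upper-triangular Jacobian of $\tilde F^2$ at the orbit points via the chain rule, bound the first eigenvalue by Lemma~\ref{exact2}, and bound the minor-disease eigenvalues by verifying $\bar x_\ell,\bar x_r>\tilde x_{6,2}$, the crux being the numerical check $f^2(\tilde x_{6,2})\geq\tilde x_{6,2}$. One minor point: the implication $f^2(\tilde x_{6,2})>\tilde x_{6,2}\Rightarrow \tilde x_{6,2}<\bar x_\ell$ is not a consequence of the monotonicity of $f^2$ on $(\hat x,\bar x)$ per se, but of the sign of $f^2(x)-x$ there (positive on $(\hat x,\bar x_\ell)$, negative on $(\bar x_\ell,\bar x)$), which follows from Lemma~\ref{exact2} together with $f^2(\hat x)>\hat x$.
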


\begin{proof}
Note that it is the same to prove that $\lbrace (\bar x_\ell, 0_{k-2}), (\bar x_r,0_{k-2})\rbrace$ is an attracting orbit for  $\tilde F=(\tilde F_1,\dots,\tilde F_{k-1})$ where
\[ \tilde F_\ell(x)=\left\lbrace\begin{array}{ll}\left(1-x_1-\sum_{j=2}^{k-1}x_j\right)^z-\left(1-2x_1-\sum_{j=2}^{k-1}x_j\right)^z&\mbox{ if $\ell=1$}\\
\left(1-2x_1-\sum_{j=2,j\neq\ell}^{k-1}x_j\right)^z-\left(1-2x_1-\sum_{j=2}^{k-1}x_j\right)^z&\mbox{ otherwise.}
\end{array}\right.\]
Let  $B_{i,j}(x)=\frac{(\partial \tilde F^2)_i}{\partial x_j}(x)=\sum_{m=1}^{k-1}\frac{\partial\tilde F_i}{\partial x_m}(\tilde F(x))\frac{\partial\tilde F_m}{\partial x_j}(x) $ and note that:
\[B_{i,j}(\bar x_\ell, 0_{k-2})=B_{i,j}(\bar x_r, 0_{k-2})=\left\lbrace\begin{array}{cc}
0&\mbox{ if $i\neq j$}\\
\frac{\partial \tilde F_i}{\partial x_i}(\bar x_\ell,0_{k-2})\frac{\partial\tilde F_i}{\partial x_i}(\bar x_r,0_{k-2})&\mbox{ otherwise. }
\end{array}
 \right.\]
$B_{i,j}(\bar x_\ell, 0_{k-2})$ is upper triangular and   $|B_{1,1}(\bar x_\ell,0_{k-2})|=\left|\frac{\partial\tilde F_1}{\partial x_1}(\bar x_\ell,0_{k-2})\frac{\partial\tilde F_1}{\partial x_1}(\bar x_r,0_{k-2})\right|<1$ according to lemma \ref{exact2}. 
It remains to prove that the other eigenvalues of $B$ are (strictly) bounded above by 1, the positivity being obvious.\\ 
With the same notations as the one of the proof of lemma \ref{framing}, recall that if $x<\tilde x$, $\frac{\partial\tilde F_i}{\partial x_i}(x,0_{k-2})<1$ and greater than 1 otherwise, and one can easily check that  $f^2(\tilde x)\geq \tilde x$. Then, as $\tilde x<\bar x$, obviously $\tilde x\leq \bar x_{\ell}$  and consequently $\frac{\partial\tilde F_i}{\partial x_i}(\bar x_\ell,0_{k-2})<1$ and as $\tilde x< \bar x_{r}$, $\frac{\partial \tilde F_i}{\partial x_i}(\bar x_\ell,0_{k-2})\frac{\partial\tilde F_i}{\partial x_i}(\bar x_r,0_{k-2})<1$. 
\end{proof}\hfill$\finpreuvem$\\

Now we are able to prove Theorem \ref{concl}. The previous lemma ensures the existence of $\varepsilon>0$ such that if $x\in B((\bar x_r,0_{k-2}), \varepsilon)$ (respectively  $B((\bar x_\ell,0_{k-2}), \varepsilon)$), then $\lim_{n\rightarrow+\infty}\tilde F^{2n}(x)=(\bar x_r, 0_{k-2})$ (respectively $(\bar x_\ell, 0_{k-2})$).\\
According to the stable manifold Theorem, the stable manifold of $\tilde F$, $W^s(\bar{x},0_{k-2})$ is a one dimensional smooth manifold in a neighborhood of $(\bar{x},0_{k-2})$. So there exists $\varepsilon^\prime$ such that $W_{\varepsilon^\prime}^s = W^s(\bar{x},0_{k-2}) \cap B((\bar{x},0_{k-2}),\varepsilon^\prime)>0$ is negligible with respect to the Lebesgue measure. In other words, for almost every $x \in B((\bar{x},0_{k-2}),\varepsilon^\prime)$, there exists $m > 0$ such that $\tilde F^{m}(x)\notin  B((\bar x,0_{k-2}), \varepsilon^\prime)$.\\
 Let $E_n=\lbrace x_1\in[\bar x+\varepsilon^\prime, \hat x_r], \tilde F^{2n}(x_1,0_{k-2})\in B((\bar x_r,0_{k-2}),\nicefrac{\varepsilon}{2}) \rbrace$, and according to Proposition \ref{Leb}, we have already seen that $[\bar x+\varepsilon^\prime, \hat x_r]\subset \cup_{n\geq 0} E_n$. With a similar reasoning as the one of Proposition \ref{nonunif_CV},  there exists $N\geq0$ such that $[\bar x+\varepsilon^\prime, \hat x_r]\subset \cup_{n=0}^N E_n$ implying that for all  $x_1\in[\bar x+\varepsilon^\prime, \hat x_r]$, $\tilde F^{2N}((x_1,0_{k-2}))\in B((\bar x_r,0_{k-2}),\nicefrac{\varepsilon}{2})$.\\
Using the uniform continuity of $\tilde F^{2N}$ on the compact set $\mathscr H:=[\bar x+\varepsilon^\prime,\hat x_r ]\times \mathbb R^{k-2}\cap\overline{\mathscr P_{k,2}}$, there exists $\nu>0$ such that 
\[\forall x,y\in\mathscr H,  \Vert x-y\Vert\leq \nu \Rightarrow \Vert \tilde F^{2N}(x)-\tilde F^{2N}(y)\Vert\leq \nicefrac{\varepsilon}{2}. \]
Thus for $x=(x_1,\dots,x_{k-1})\in [\bar x+\varepsilon^\prime, \hat x_r]\times B(0_{k-2}, \nu)$:
\begin{eqnarray*}
\Vert \tilde F^{2N}(x_1,\dots,x_{k-1})-(\bar x_r,0_{k-2})\Vert &\leq& \Vert \tilde F^{2N}(x_1,\dots,x_{k-1})-\tilde F^{2N}(x_1,0_{k-2})\Vert\\
&\,&\quad+\Vert\tilde  F^{2N}(x_1,0_{k-2})-(\bar x_r,0_{k-2})\Vert\\
&\leq& \nicefrac{\varepsilon}{2}+\nicefrac{\varepsilon}{2}=\varepsilon.
\end{eqnarray*}
Consequently $\tilde F^{2N}(x)\in B((\bar x_r,0_{k-2}), \varepsilon)$ implying that $F^{2n}(x)\rightarrow\bar x_r$.\\
Note that we can realize exactly the same reasoning on $[\hat x,\bar x-\varepsilon^\prime]$.\\
Let $x_1\in[\alpha,\nicefrac{1}{2}]\backslash[\hat x, \hat x_r]$ (see Lemma \ref{zeroplus} for the definition of $\alpha$) and according to Proposition \ref{Leb} there exists $n$ such that $\tilde F^{2n}(x_1, 0_{k-1})\in [\hat x,\hat x_r]\times B(0_{k-2}, \nu)$. 
Using again the uniform continuity of $\tilde F^{2n}$, there exists $\nu^\prime$ such that  
\[\Vert x-y\Vert \leq \nu^\prime\Rightarrow \Vert\tilde  F^{2n}(x)-\tilde F^{2n}(y)\Vert\leq \delta:=\min\left(\nu,\frac{\tilde F^{2n}_1(x_1, 0_{k-2})-\hat x}{2}, \frac{\hat x_r-\tilde F^{2n}_1(x_1,0_{k-2})}{2}\right) \]
implying that if $\Vert (x_2,\dots,x_{k-1})\Vert\leq  \nu^\prime$ then 
\begin{eqnarray*}
\frac{\hat x+\tilde F^{2n}_1(x_1,0_{k-2})}{2}\leq &\tilde F^{2n}_1(x_1,\dots,x_{k-1})&\leq \frac{\hat x_r+\tilde F^{2n}_1(x_1,0_{k-2})}{2}\\
0\leq &\tilde F^{2n}_j(x_1,\dots,x_{k-1})&\leq \nu, \forall j>1
\end{eqnarray*}
and thus $\tilde F^{2n}(x_1,\dots,x_{k-1})\in [\hat x,\hat x_r]\times B(0_{k-2}, \nu) $.\\
Note that we write $\tilde \bp=(\tilde \bp_1,\dots,\tilde \bp_{k-1})$ for $(\bp_1,\bp_3,\bp_4, \dots,\bp_{k})$ and $\tilde \bp(n)=\tilde F^n(\tilde \bp)$.\\
According to lemma \ref{zeroplus}, $\tilde \bp_{j}(n)\rightarrow 0$ for all $j\geq 2$ and according to lemma \ref{zero}, $\tilde \bp_1(n) \in[\alpha, \nicefrac{1}{2}]$ for all $n\geq1$.  There exists $N_1$ such that $\forall n\geq N_1$, $\tilde \bp(n)\in (0,\nicefrac{1}{2}]\times B({0_{k-2}},\nu^\prime)$. Then there exists $n\geq0$ such that 
\[\tilde \bp(N_1+2n)\in[\hat x,\hat x_r]\times B(0_{k-2}, \nu). \]
If $\tilde \bp(N_1+2n)\in [\hat x,\bar x-\varepsilon^\prime]\cup [\bar x+\varepsilon^\prime,\hat x_r ]\times B(0_{k-2}, \nu)$, we have the convergence to $(\bar x_\ell,0_{k-2})$ or $(\bar x_r,0_{k-2})$. 
Otherwise $\tilde \bp(N_1+2n)\in B(\bar x,\varepsilon^\prime)\times B(0_{k-2},\nu)$ and 
\begin{itemize}
\item either $\lim_{n\rightarrow+\infty}\tilde \bp(2n)=(\bar x, 0_{k-2})$,
\item or there exists $m>n$ such that $\tilde \bp(N_1+2m)\in  [\hat x,\bar x-\varepsilon^\prime]\cup [\bar x+\varepsilon^\prime,\hat x_r ] \times B(0_{k-2},\nu)$ and we have convergence to $(\bar x_\ell,0_{k-2})$ or $(\bar x_r,0_{k-2})$. 
\end{itemize}
If we are in the first case, it means that $\tilde \bp\in W_s=\cup_{n>0}\tilde F^{-n}(W^s_{\varepsilon^\prime})$ and as a countable union of negligible sets with respect to the Lebesgue measure, $W_s$ is also negligible: 
\begin{equation}
\lambda\left(\tilde \bp\in \mathscr P_{k,2}, \lim_{n\rightarrow+\infty}\tilde \bp(2n)\notin\lbrace(\bar x_\ell,0_{k-2}), (\bar x_r,0_{k-2})\rbrace \right)=0
\end{equation}



\begin{Rem}
This result is not limited to the case $z=6$ and $i=2$. Indeed, according to our proof, the only conditions a case $(z,i)$ has to satisfy are~:
\begin{enumerate}
\item
	$\bar{x}_{z,i}$ is such that $\partial_x f_{z,i}(\bar{x}_{z,i}) < -1$.
\item
	$f_{z,i}(\hat{x}_{z,i}) > \hat{x}_{z,i}$.
\item
	$f_{z,i}(\nicefrac{1}{i}) < \hat{x}_{z,i}$.
\end{enumerate}
The last two conditions appear in the proof of lemmas \ref{fixe} and \ref{exact2}. While condition 3 seems to be always satisfied, condition 2 is not. For instance, it is not satisfied for $(z,i) = (12,2)$ and numerical simulations show the existence of an orbit of period 4 (see Figure \ref{z12i2}) for this case. However, the aim of the paper is not the study of all non convergent cases.
\end{Rem}

\begin{figure}[h!]
\caption{$z=12,i=2$, $f$ (red), $f^2$ (yellow), $f^4$ (blue) } \label{z12i2}
\begin{center}
\includegraphics[width=7cm]{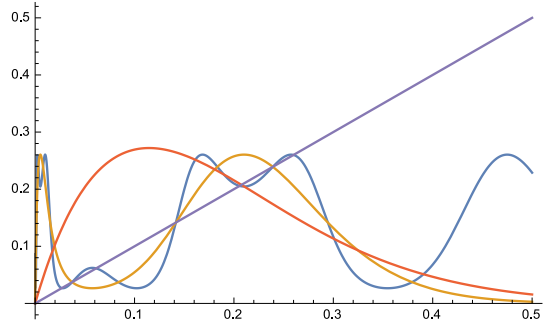}
\end{center}
\end{figure}


\section{Open questions and variant case}
In \cite{Benjamini}, the authors study a binary tree with the following changes 
\begin{itemize}
\item $k=2$, we have only two diseases; 
\item  (R3) is replaced by  (R4): if only one of the leaves is infected, the node is infected by it with probability $\alpha$ and not infected with probability $1-\alpha$.
\end{itemize}
The authors obtain the following
\begin{The}
For all $\bp\in\mathscr P_2$, $\bp(n)$ converges and 
\begin{enumerate}
\item If $\alpha>\nicefrac{1}{2}$
\[\lim_{n\rightarrow\infty}\bp(n)=\left\lbrace\begin{array}{ll}
(1,0,0)&\mbox{ if $\bp_1>\bp_2$},\\
\left(\frac{2\alpha-1}{4\alpha-1},\frac{2\alpha-1}{4\alpha-1},\frac{1}{4\alpha-1}\right)&\mbox{ if $\bp_1=\bp_2$.}
\end{array}\right.\] 
\item If $\alpha=\nicefrac{1}{2}$
\[\lim_{n\rightarrow\infty}\bp(n)=\left\lbrace\begin{array}{ll}
(\bp_1-\bp_2, 0,1-\bp_1+\bp_2)&\mbox{ if $\bp_1>\bp_2$},\\
\left(0,0,1\right)&\mbox{ if $\bp_1=\bp_2$.}
\end{array}\right.\] 
\item If $\alpha<\nicefrac{1}{2}$
$\lim_{n\rightarrow\infty}\bp(n)=\left(0,0,1\right)$.
\end{enumerate}
\end{The}
In the same spirit, we replace (R3') by (R4'): 
\begin{eqnarray*}
\p\left(\bigotimes_{j=1}^z X_j=\1\middle|\mathscr A_i^\ell(z) \right)&=&(1-\alpha_i)^{z-\ell}
\end{eqnarray*} 
where $\alpha_i$ is the probability of infection $i$ and $\mathscr A_i^\ell(z):=\lbrace S_z=\lbrace1,\dots,\ell\rbrace,X_{\ell+1}=\dots=X_z=e_i\rbrace$ the event {\it{the $\ell$ firsts children are not sick  and the others are infected by disease $i$}}.\\
Recall (R1'): if all the children have the same state (infected or not) the ancestor is infected (or not) by it (which does not seem natural). 
\begin{figure}[h!]
\caption{(R4') for $N=3$ a.s.} \label{fe4}
\begin{center}
\includegraphics[width=8cm]{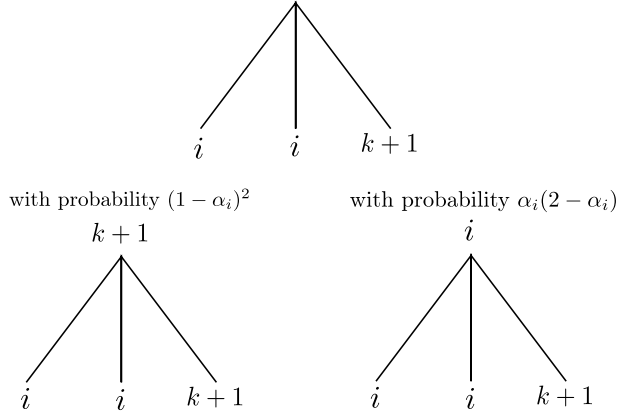}
\end{center}
\end{figure}

We obtain the equivalent of lemma \ref{rec}:
\begin{lem}
For all $n\geq1$:
\begin{equation*}
\bp_{i}(n+1)=\left\lbrace\begin{array}{ll}
G_N(\bp_{k+1}(n)+\bp_i(n))-G_N(\bp_{k+1}(n)+\bp_i(n)(1-\alpha_i))+G_N((1-\alpha_i)\bp_i(n)), \mbox{if $i\leq k$ }\\
1-\sum_{j=1}^k\bp_{i}(n+1)\mbox{ otherwise.}
\end{array}\right.
\end{equation*} 
\end{lem}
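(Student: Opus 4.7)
The plan is to mimic the proof of Lemma~\ref{rec}. Fix $i \in \{1,\dots,k\}$, condition on $N = z$, and partition $\p(X_\phi = e_i)$ according to the events $\mathscr{A}_i^\ell(z)$, namely that exactly $\ell$ children of $\phi$ are healthy and the remaining $z-\ell$ are all infected by $i$. A first reduction is the observation that any other configuration of the children contributes $0$ to $\bp_i(n+1)$: as soon as some disease $j \neq i$ appears among the children, rule (R2') (left unchanged in the variant) forces $X_\phi = \1$. Hence only the events $\mathscr{A}_i^\ell(z)$, $\ell \in \{0,\dots,z\}$, have to be summed.

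I would then treat the three regimes in $\ell$ separately. For $\ell = z$, rule (R1') gives $X_\phi = \1$, contributing $0$. For $\ell = 0$, rule (R1') gives $X_\phi = e_i$ with probability $1$, contributing $\bp_i(n)^z$. For $1 \leq \ell \leq z-1$, the new rule (R4') yields $\p(X_\phi = e_i \mid \mathscr{A}_i^\ell(z)) = 1 - (1-\alpha_i)^{z-\ell}$, so the contribution of $\mathscr{A}_i^\ell(z)$ is $\binom{z}{\ell}\bp_{k+1}(n)^\ell \bp_i(n)^{z-\ell}\bigl(1-(1-\alpha_i)^{z-\ell}\bigr)$.

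Summing these three contributions and completing each partial sum $\sum_{\ell=1}^{z-1}$ into a full binomial sum $\sum_{\ell=0}^{z}$ via the identity $\sum_\ell \binom{z}{\ell} a^\ell b^{z-\ell} = (a+b)^z$, the $\ell = z$ boundary terms $\bp_{k+1}(n)^z$ cancel between the two binomial expansions (one applied with $b = \bp_i(n)$, the other with $b = (1-\alpha_i)\bp_i(n)$), the $\ell = 0$ boundary term $\bp_i(n)^z$ of the first expansion cancels the explicit contribution from (R1'), and the $\ell = 0$ boundary term $((1-\alpha_i)\bp_i(n))^z$ of the second expansion survives with a $+$ sign. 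Weighting by $q_z$ and summing over $z \geq 2$ then produces exactly the three generating functions $G_N(\bp_{k+1}(n)+\bp_i(n)) - G_N(\bp_{k+1}(n)+(1-\alpha_i)\bp_i(n)) + G_N((1-\alpha_i)\bp_i(n))$. The second case ($i = k+1$) follows by conservation of probability.

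The only subtle point, and the origin of the extra term $G_N((1-\alpha_i)\bp_i(n))$ absent from Lemma~\ref{rec}, is the treatment of the boundary $\ell = 0$: naively extending (R4') to this case would give probability $1 - (1-\alpha_i)^z$, whereas (R1') enforces probability $1$. The missing mass $\bp_i(n)^z (1-\alpha_i)^z$, once summed against $q_z$, is precisely the third term. In the original rule (R3') of Lemma~\ref{rec} this correction vanishes because the deterministic infection of the ancestor corresponds implicitly to $\alpha_i = 1$. This is the main (and only) step requiring care; the rest is a bookkeeping manipulation of binomial sums.
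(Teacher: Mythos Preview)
Your proposal is correct and follows essentially the same approach as the paper: condition on $N=z$, partition according to the number $\ell$ of healthy children, isolate the $\ell=0$ case governed by (R1'), apply (R4') for $1\leq\ell\leq z-1$, and use the binomial theorem to collapse the sums into closed-form expressions in $G_N$. Your discussion of the boundary cancellations and of why the third term $G_N((1-\alpha_i)\bp_i(n))$ appears is slightly more explicit than the paper's, but the argument is the same.
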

\begin{proof}
For $i\neq k+1$, the recursion formula obtained here is:
\begin{align*}
\bp_i(n+1)&=\sum_{z=2}^\infty q_z\sum_{\ell=0}^{z-1}\p\left(|S_z|=\ell, \bigotimes_{j=1}^zX_j=e_i    \right)\\
&=\sum_{z=2}^\infty q_z\sum_{\ell=0}^{z-1}(_\ell^z)\bp_{k+1}^\ell(n) \bp_{i}^{z-\ell}(n)\p\left( \bigotimes_{j=1}^zX_j=e_i\left\vert \mathscr A_i^\ell(z)\right.\right)\\
&=\sum_{z=2}^\infty q_z\left[\sum_{\ell=1}^{z-1}(_\ell^z)\bp_{k+1}^\ell(n) \bp_{i}^{z-\ell}(n)(1-(1-\alpha_i)^{z-\ell})+\bp_i(n)^z\right]\\
&=\sum_{z=2}^\infty q_z\left((\bp_{k+1}(n)+\bp_i(n))^z-(\bp_{k+1}(n)+\bp_i(n)(1-\alpha_i))^z+(1-\alpha_i)^z\bp_i(n)^z\right)\\
&=G_N(\bp_{k+1}(n)+\bp_i(n))-G_N(\bp_{k+1}(n)+\bp_i(n)(1-\alpha_i))+G_N((1-\alpha_i)\bp_i(n)).\hfill\finpreuvem
\end{align*}
\end{proof}

In all the following we assume that 
\begin{equation}\label{alpha}
\forall i\in\llbracket 1,k\rrbracket, \alpha_i=\alpha> \E[N]^{-1}. 
\end{equation}
\begin{lem}\label{zerobis}
There exists $\eta>0$ such that 
\[\forall n\in \mathbb N, \bp_1(n)\geq \min \lbrace G_N(\eta),\bp_1\rbrace\]
\end{lem}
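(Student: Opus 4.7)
The plan is to mirror the structure of Lemma \ref{zeroplus}, adapting it to the new recursion. The recursion for $\bp_1(n+1)$ involves three terms instead of two, but the extra term $G_N((1-\alpha)\bp_1(n))$ is non-negative and will be used only at the end as a ``crude'' fallback bound.

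First I would establish that $\bp_1(n)\ge \bp_j(n)$ for all $n\ge 0$ and $j\in\{1,\dots,k\}$, by induction from the initial ordering $\bp_1\ge\dots\ge\bp_k$. Fixing $y=\bp_{k+1}(n)$, the recursion writes each $\bp_j(n+1)$ as $\psi(\bp_j(n))$ with $\psi(x):=G_N(y+x)-G_N(y+(1-\alpha)x)+G_N((1-\alpha)x)$. A direct computation, using that $G_N'$ is increasing (non-negative Taylor coefficients), yields $\psi'(x)\ge \alpha G_N'(y+(1-\alpha)x)+(1-\alpha)G_N'((1-\alpha)x)\ge 0$, so $\psi$ is non-decreasing and the ordering propagates. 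As a consequence $\bp_{k+1}(n)\ge 1-k\bp_1(n)$ for all $n$.

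Next, observing that the recursion for $\bp_1(n+1)$ is also increasing in $y=\bp_{k+1}(n)$ (since $\partial_y[G_N(y+x)-G_N(y+(1-\alpha)x)]=G_N'(y+x)-G_N'(y+(1-\alpha)x)\ge 0$), I deduce, for $x:=\bp_1(n)\le 1/k$, the scalar bound
$$\bp_1(n+1)\ \ge\ \phi(x)\ :=\ G_N(1-(k-1)x)-G_N(1-(k-1+\alpha)x)+G_N((1-\alpha)x).$$
I then write $G_N(1-u)=G_N(1)-uG_N'(1)+\varepsilon(u)$ with $\varepsilon(u)/u\to 0$. Since assumption \eqref{alpha} gives $\beta:=\alpha\E[N]-1>0$, I choose $\eta'>0$ small enough so that $|\varepsilon(u)|\le \frac{\beta}{2(k-1+\alpha)}u$ for $0<u\le\eta'$, and set $\eta:=\eta'/(k-1+\alpha)$ (which we may take $\le 1/k$). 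For $x\in(0,\eta]$ this gives $\phi(x)\ge \alpha\E[N]\, x-\beta x=x$, hence $\bp_1(n+1)\ge \bp_1(n)$ whenever $\bp_1(n)\le\eta$.

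For $x=\bp_1(n)>\eta$, I use the crude fallback $\bp_1(n+1)\ge G_N((1-\alpha)\bp_1(n))\ge G_N((1-\alpha)\eta)>0$, obtained simply by dropping the non-negative term $G_N(y+x)-G_N(y+(1-\alpha)x)$. Setting $\tilde\eta:=(1-\alpha)\eta$, I conclude by induction that $\bp_1(n)\ge \min\{G_N(\tilde\eta),\bp_1\}$: if $\bp_1(n)\le\eta$ the map is non-decreasing so the bound propagates, while if $\bp_1(n)>\eta$ the fallback gives $\bp_1(n+1)\ge G_N(\tilde\eta)$. Relabelling $\tilde\eta\mapsto\eta$ yields the statement. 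The main technical obstacle is the Taylor estimate for $\phi$: the assumption $\alpha\E[N]>1$ is precisely what allows the linear coefficient $\alpha\E[N]$ to overcome the error of order $x$ coming from $\varepsilon$, and without this condition the recursion could drive $\bp_1(n)$ to zero.
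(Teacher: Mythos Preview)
Your argument is correct and follows the same skeleton as the paper: a Taylor expansion near $0$ (exploiting $\alpha\,\E[N]>1$) to get $\bp_1(n+1)\ge\bp_1(n)$ when $\bp_1(n)$ is small, and a uniform positive lower bound when $\bp_1(n)$ is large. The only genuine divergence is in the large-$x$ step. The paper keeps all three terms and minimises over $y$ exactly as in Lemma~\ref{zeroplus}; at the extremal $y$ the contribution $-G_N((1-\alpha)x)$ from the second term cancels the third term $+G_N((1-\alpha)x)$, so one recovers $G_N(x)\ge G_N(\eta)$ uniformly in $\alpha$, with no relabelling needed. Your fallback instead discards the first two (non-negative) terms and keeps only $G_N((1-\alpha)\bp_1(n))\ge G_N((1-\alpha)\eta)$. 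This is simpler but strictly weaker: it forces the relabelling $\eta\mapsto(1-\alpha)\eta$ and, more importantly, collapses to $G_N(0)=0$ at $\alpha=1$ (a value not excluded by the hypothesis $\alpha>\E[N]^{-1}$). That endpoint is of course the original rule, covered directly by Lemma~\ref{zeroplus}, so the gap is cosmetic; but the paper's route handles all $\alpha\in(\E[N]^{-1},1]$ in one stroke.
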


\begin{proof}
According to \eqref{alpha}, there exists $\delta>0$ such that $\alpha=\E[N]^{-1}+\delta$.
As $G_N(1-x)=G_N(1)-xG_N^\prime(1)+\varepsilon(x)$ where $\nicefrac{\varepsilon(x)}{x}\rightarrow 0$ when $x$ goes to 0, there exists $\eta>0$ small enough such that if $0<x\leq\eta$, $|\varepsilon(x)|\leq \nicefrac{\delta x G_N^\prime(1)}{k}$. Then for $0<x\leq \nicefrac{\eta}{k}$ and $0<y<(k-i)x$: 
\begin{eqnarray*}
{G_N\left(1-(i-1)x-y\right)-G_N\left(1-(i-1+\alpha) x-y\right)}
\geq{\alpha x}G^\prime_N(1)-\delta xG^\prime_N(1)\geq x,
\end{eqnarray*}
and as  $G_N((1-\alpha)x)\geq0$: 
\[G_N(1-(i-1)x-y)-G_N(1-(i-1+\alpha) x-y)+G_N((1-\alpha)x)\geq x.\]
The rest of the proof is exactly the same as the one of lemma \ref{zeroplus}.
\end{proof}\hfill$\finpreuvem$
\begin{lem}
For all $j>i$, $\bp_j(n)\underset{n\rightarrow\infty}{\rightarrow}0$. 
\end{lem}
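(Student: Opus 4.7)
The plan is to transcribe the proof of Lemma~\ref{zero} almost verbatim, with the per-step map adjusted to reflect rule (R4'). I first isolate this map: the recursion reads $\bp_j(n+1)=g_n(\bp_j(n))$ for every $j\in\llbracket 1,k\rrbracket$, where
\[ g_n(x):=G_N(\bp_{k+1}(n)+x)-G_N(\bp_{k+1}(n)+(1-\alpha)x)+G_N((1-\alpha)x), \]
and $g_n$ depends on $n$ only through the common quantity $\bp_{k+1}(n)$. A one-line check shows $g_n$ is strictly increasing on $[0,1]$, so the strict ordering $\bp_1(n)>\bp_{i+1}(n)\geq\bp_{i+2}(n)\geq\dots\geq\bp_k(n)$ is preserved for every $n$; in particular, it suffices to prove $\bp_{i+1}(n)\to 0$, since $\bp_j(n)\leq\bp_{i+1}(n)$ for $j>i+1$ is then automatic.

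As in Lemma~\ref{zero} I would introduce $w_n:=\bp_{i+1}(n)/\bp_1(n)\in(0,1)$ and prove that $(w_n)$ is strictly decreasing. The key algebraic step, which I expect to be the main obstacle, is to show that $x\mapsto g_n(x)/x$ is strictly increasing on $(0,1)$. Using $a^z-b^z=(a-b)\sum_{j=0}^{z-1}a^{z-1-j}b^j$ to pull $\alpha x$ out of the first difference and factoring $x$ out of $G_N((1-\alpha)x)$, one obtains
\begin{align*}
\frac{g_n(x)}{x}&=\alpha\sum_{z\geq 2}q_z\sum_{j=0}^{z-1}(\bp_{k+1}(n)+x)^{z-1-j}\bigl(\bp_{k+1}(n)+(1-\alpha)x\bigr)^{j}\\
&\qquad+\sum_{z\geq 2}q_z(1-\alpha)^{z}x^{z-1}.
\end{align*}
Each summand is strictly increasing in $x$ (the second sum is nontrivial because the B\"otcher assumption $q_0+q_1=0$ gives $q_z>0$ for some $z\geq 2$). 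Evaluated at $0<\bp_{i+1}(n)<\bp_1(n)$ this yields $w_{n+1}<w_n$, so $w_n\searrow\ell$ for some $\ell\in[0,1)$.

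The conclusion is then exactly the compactness and contradiction argument of Lemma~\ref{zero}. Extract a subsequence $(n_m)$ along which $\bp_j(n_m)\to a_j$ for every $j\leq k+1$; Lemma~\ref{zerobis} gives $a_1>0$. If $\ell>0$, then $0<a_{i+1}=\ell a_1<a_1$, and passing to the limit in
\[ w_{n_m+1}=\frac{g_{n_m}(\bp_{i+1}(n_m))}{g_{n_m}(\bp_1(n_m))}=\frac{g_{n_m}(\bp_{i+1}(n_m))/\bp_{i+1}(n_m)}{g_{n_m}(\bp_1(n_m))/\bp_1(n_m)}\,w_{n_m} \]
together with the strict monotonicity of $x\mapsto g_\infty(x)/x$ (where $g_\infty$ is $g_n$ with $\bp_{k+1}(n)$ replaced by $a_{k+1}$) gives $\ell<\ell$, a contradiction. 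Hence $\ell=0$ and $\bp_{i+1}(n)\to 0$, and the monotonicity of $g_n$ already noted propagates this to every $j>i$. Apart from the explicit expansion of $g_n(x)/x$, every step is a direct transplant from the proof of Lemma~\ref{zero}.
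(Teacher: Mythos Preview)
Your proposal is correct and follows essentially the same route as the paper: both expand $\bp_j(n+1)/\bp_j(n)$ as $\alpha\sum_{z\geq 2}q_z\sum_{j=0}^{z-1}(\bp_{k+1}(n)+x)^{z-1-j}(\bp_{k+1}(n)+(1-\alpha)x)^j+\sum_{z\geq 2}q_z(1-\alpha)^z x^{z-1}$, observe this is increasing in $x$ so that $w_n=\bp_{i+1}(n)/\bp_1(n)$ is decreasing, and then invoke the compactness-plus-contradiction argument of Lemma~\ref{zero} verbatim. Your write-up is in fact slightly cleaner than the paper's (which leaves the $(1-\alpha)^z$ term outside the $z$-sum and has an index slip), and your explicit remark that $g_n$ is increasing so the ordering $\bp_1(n)>\bp_{i+1}(n)\geq\dots\geq\bp_k(n)$ persists is a nice addition.
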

\begin{proof}
Writing for all $1\leq j\leq k$:
\[\bp_{j}(n+1)=\alpha\sum_{z\geq2}q_z \bp_{j}(n)\sum_{\ell=0}^{z-1}(\bp_{j}(n)+\bp_{k+1}(n))^{z-1-\ell} (\bp_{k+1}(n)+(1-\alpha)\bp_{j})^\ell+(1-\alpha)^z\bp_{i+1}(n)^{z}\]
it is not difficult to see that the sequence $w^\prime_n=\frac{\bp_{i+1}(n)}{\bp_1(n)}$ is positive and decreasing. 
The rest of the proof is the same as the one of Lemma \ref{zero}. 
\end{proof}\hfill$\finpreuvem$
\begin{prop}
If $\bp_1>\bp_2\geq\dots\geq \bp_{k}$ then $\lim_{n\rightarrow+\infty}\bp_1(n)=1$. 
\end{prop}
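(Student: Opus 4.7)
The plan is to mirror the argument used for point 3 of Theorem \ref{first}, adapted to the recursion of the lemma just proved. The hypothesis $\bp_1 > \bp_2 \geq \dots \geq \bp_k$ places us in the case $i = 1$, so the previous lemma yields $\bp_j(n) \to 0$ for every $j \geq 2$, while Lemma \ref{zerobis} ensures that $\ell := \liminf_{n \to \infty} \bp_1(n) \geq \min\{G_N(\eta), \bp_1\} > 0$.

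Substituting $\bp_{k+1}(n) = 1 - \sum_{j=1}^k \bp_j(n)$ into the recursion, the evolution of $\bp_1$ takes the form
\[
\bp_1(n+1) = G_N\!\left(1 - \sum_{j\geq 2}\bp_j(n)\right) - G_N\!\left(1 - \sum_{j\geq 2}\bp_j(n) - \alpha\bp_1(n)\right) + G_N\!\left((1-\alpha)\bp_1(n)\right).
\]
Since $\sum_{j\geq 2}\bp_j(n) \to 0$ and $G_N$ is continuous, this rewrites as $\bp_1(n+1) = h(\bp_1(n)) + o(1)$, where
\[
h(x) := 1 - G_N(1 - \alpha x) + G_N((1-\alpha)x), \qquad x \in [0,1].
\]
A direct computation gives $h'(x) = \alpha G_N'(1 - \alpha x) + (1-\alpha) G_N'((1-\alpha)x) \geq 0$, so $h$ is continuous and non-decreasing on $[0,1]$.

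Next, I will extract a subsequence $(n_k)$ along which $\bp_1(n_k) \to \ell$ and $\bp_j(n_k) \to 0$ for $j \geq 2$, and pass to the limit in the recursion to obtain $\bp_1(n_k+1) \to h(\ell)$; combined with $\liminf \bp_1(n+1) = \ell$, this yields $h(\ell) \geq \ell$. The symmetric argument for $L := \limsup \bp_1(n)$ produces $h(L) \leq L$. Using the monotonicity of $h$ together with $\min\{G_N(\eta),\bp_1\} \leq \ell \leq L \leq 1$, a short iteration-and-comparison argument forces both $\ell$ and $L$ to satisfy $h(\ell) = \ell$ and $h(L) = L$, so that both are fixed points of $h$ in $(0,1]$.

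The main obstacle is then to establish that $1$ is the only fixed point of $h$ in $(0, 1]$. Setting $\phi(x) := h(x) - x$, one has $\phi(0) = \phi(1) = 0$ and $\phi'(0) = \alpha\E[N] - 1 > 0$ by the standing assumption $\alpha > \E[N]^{-1}$, whereas in the original proof of Theorem \ref{first} the convexity of $G_N$ alone closed the argument. Here I would exploit the strict convexity of $G_N$ (B\"otcher case) together with the non-negativity of its Taylor coefficients at $0$ to perform a sign analysis of $\phi$ in the spirit of the estimates of Lemma \ref{zerobis}, showing $\phi(x) > 0$ on $(0,1)$. Once this is in place, $\ell = L = 1$ and hence $\lim_{n \to \infty} \bp_1(n) = 1$. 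The delicate point is to rule out any interior fixed point of $h$, which requires carefully balancing the two competing contributions $G_N(1 - \alpha x)$ and $G_N((1-\alpha)x)$ as functions of $x \in (0,1)$.
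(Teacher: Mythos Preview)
Your outline parallels the paper's argument closely: both reduce to showing that $\ell := \liminf_n \bp_1(n)$ is a fixed point of $h(x) = 1 - G_N(1-\alpha x) + G_N((1-\alpha)x)$ on $(0,1]$, and then claim that the only such fixed point is $1$. The paper handles the first step more directly than you do: since $h$ is continuous and non-decreasing, $\liminf_n h(\bp_1(n)) = h\bigl(\liminf_n \bp_1(n)\bigr)$, whence $h(\ell) = \ell$ at once; your subsequence extraction only yields $h(\ell) \geq \ell$, and the promised ``short iteration-and-comparison argument'' upgrading this to equality is not spelled out.

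The real obstacle---which you rightly flag as the ``delicate point''---is the uniqueness of the fixed point of $h$ in $(0,1]$. The paper asserts this follows from $h$ being increasing, which is not by itself a valid reason; your proposed route (strict convexity of $G_N$ together with positivity of its Taylor coefficients, to force $\phi(x) = h(x) - x > 0$ on $(0,1)$) cannot succeed either. Take $N \equiv 3$ and $\alpha = \tfrac{2}{5}$, which satisfies the standing hypothesis $\alpha > \E[N]^{-1} = \tfrac{1}{3}$. Then $G_N(s) = s^3$ and a direct computation gives
\[
\phi(x) \;=\; 1 - \bigl(1-\tfrac{2}{5}x\bigr)^3 + \bigl(\tfrac{3}{5}x\bigr)^3 - x \;=\; \tfrac{1}{25}\,x\,(1-x)\,(5 - 7x),
\]
so $\phi$ has an interior zero at $x^\ast = \tfrac{5}{7}$ and is negative on $(\tfrac{5}{7},1)$. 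Moreover $h'(1) = G_N'(1-\alpha) = 3(\tfrac{3}{5})^2 = \tfrac{27}{25} > 1$, so $x=1$ is repelling for the increasing map $h$ while $x^\ast$ is attracting: for every $x \in (0,1)$ one has $h^n(x) \to \tfrac{5}{7}$, not $1$. Since the full recursion for $\bp_1(n)$ is a vanishing perturbation of this scalar iteration once $\bp_j(n)\to 0$ for $j\geq 2$, the same limit is to be expected for $\bp_1(n)$. In short, the uniqueness you need is false under the hypothesis $\alpha > \E[N]^{-1}$ alone, and both your sketch and the paper's proof break at exactly this point.
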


\begin{proof}
We have already shown that $\bp_{j}(n)\underset{n\rightarrow\infty}{\rightarrow}0$ for all $j>1$ and according to lemma \ref{zerobis} $\liminf_{n\rightarrow\infty}\bp_1(n)>0$. Consequently 
\begin{eqnarray*}
\bp_{1}(n+1)&=&G_N\left(1-\sum_{j=2}^k\bp_j(n)\right)-G_N\left(1-\sum_{j=2}^k\bp_j(n)-\alpha \bp_{1}(n)\right)+G_N\left((1-\alpha)\bp_1(n)\right)\\
\liminf_{n\rightarrow\infty}\bp_{1}(n+1)&=&G_N(1)-G_N\left(1-\alpha\liminf_{n\rightarrow\infty}\bp_{1}(n)\right)+G_N\left((1-\alpha)\liminf_{n\rightarrow\infty}\bp_1(n)\right)\\
&=&1-G_N\left(1-\alpha\liminf_{n\rightarrow\infty}\bp_{1}(n)\right)+G_N\left((1-\alpha)\liminf_{n\rightarrow\infty}\bp_1(n)\right).
\end{eqnarray*} 
Thus, $\liminf_{n\rightarrow\infty}\bp_{1}(n)$ is a fixed point of $x\mapsto1-G_N(1-\alpha x)+G_N((1-\alpha)x)$ on $(0,1]$. This function being increasing, the solution is 1. As a result $\liminf_{n\rightarrow\infty}\bp_{1}(n)=\lim_{n\rightarrow\infty}\bp_1(n)=1$. 

\end{proof}\hfill$\finpreuvem$\\

Here we give some open questions that may be interesting to study. 

\begin{enumerate}
\item What happens if we slightly change (R1') and (R2'): although all children are sick, we need at least a disease to be expressed. In a more probabilistic way, for all $0\leq \ell\leq z-1$
\begin{eqnarray*}
\p\left(\bigotimes_{j=1}^z X_j=e_i \middle| \mathscr A_i^\ell(z) \right)&=&\sum_{w=1}^{z-\ell}\alpha_i^w(1-\alpha_i)^{z-\ell-w}.
\end{eqnarray*}
Assuming that $\alpha_i<\E[N]^{-1}$:
\begin{equation}\label{perco}
\lim_{n\rightarrow+\infty}\bp_i(n)=0.
\end{equation}
Indeed, we can bound above $\lim_{n\rightarrow+\infty}\bp_i(n)$ by the probability that in a GW with reproduction law $N$ and percolation probability $\alpha_i$, there is an infinite connected component containing $\phi$. It is well known that $\alpha_c=\E[N]^{-1}$, is the critical probability for the existence of such a connected component which gives \eqref{perco} 
\item An interesting subject is the link between the law of $N$ and the existence of a fixed point. It seems obvious that if $N$ is heavy tailed, there is no fixed point but what happens if $\p(N\geq 6)$ is very low? Is there a critical value for this probability implying the existence or not of a fixed point? 
\item Numerical simulations suggest the existence of a unique attracting orbit for every $z$ and $k$. The study of the Schwarzian derivative of $f$ could be a good point of view. Moreover for every $p\in \mathbb N$, it seems that we can find $z$ and $k$ such that the attracting orbit has $2p$ for prime period.  To conclude, can we write a complete classification of the map with topological conjugacy: Two maps $f:A\rightarrow A$ and $g:B\rightarrow B$ are said to be topologically conjugate if there exists a homeomorphism $h:A\rightarrow B$ such that $f=h^{-1}\circ g\circ h$. In this case $f$ and $g$ are equivalent in terms of their dynamics. For instance if $x$ is a fixed point of $f$, $h(x)$ is a fixed point of $g$. 
\end{enumerate}


\section{Appendix}

In this section, we remind elementary definitions and results on discrete dynamical systems we use in section \ref{s_zary} and \ref{s_z6_i2}. For a more thorough presentation, we refer the reader to \cite{Deva,Teschl}.

So let $M$ be a metric space and $f:M\to M$ a $C^1$ mapping. We are interested in the discrete dynamical system corresponding to
\begin{equation}
f^0(x) = x,\ f^n(x) = f^{n-1}\circ f(x),\ n \in \N,\ x \in M
\end{equation}
\begin{Def}[fixed and periodic points]
A point $y \in M$ satisfying $y=f(y)$ is called a fixed point of $f$.\\
A point $y \in M$ satisfying $y = f^p(y)$ for $p \in \N^\star$ and $y\neq f^n(y)$ for $n \in \{1,\cdots, p-1\}$ is called a periodic point of $f$ of prime period $p$.
\end{Def}

Note that a fixed point of $f$ is a periodic point of $f$ of prime period $1$.

\begin{Def}[attracting and repelling periodic points]
Let $y \in M$ be a periodic point of $f$ of prime period $p$.
\begin{enumerate}
\item
	$y$ is an attracting periodic point of $f$ (of prime period $p$) if there exists an open neighborhood $U$ of $y$ such that
	\begin{equation*}
	\forall x \in U,\ \lim_{n\rightarrow+\infty} f^{np}(x) = y.
	\end{equation*}
\item
	$y$ is a repelling periodic point of $f$ (of prime period $p$) if there exists an open neighborhood $U$ of $y$ such that
	\begin{equation*}
	\forall x \in U\backslash \lbrace y\rbrace,\ \exists n \in \N\ s.t.\ f^{np}(x) \notin U.
	\end{equation*}
\end{enumerate}
\end{Def}

\begin{Def}[hyperbolic periodic point]
Let $y \in M$ be a periodic point of $f$ of prime period $p$, and let $A = D(f^p)(y)$ be the Jacobian matrix of $f^p$ at $y$. If $A$ is invertible and has none of its eigenvalues on the unit circle, we call $y$ a hyperbolic periodic point of $f$ (of prime period $p$).
\end{Def}

\begin{prop}
Let $y$ be an hyperbolic periodic point of $f$ of prime period $p$, and let $A = D(f^p)(y)$ be the Jacobian matrix of $f^p$ at $y$.
\begin{enumerate}
\item
	If all the eigenvalues of $A$ are inside the unit circle, then $y$ is an attracting periodic point of $f$. $y$ is said to be linearly attracting. 
\item
	If all the eigenvalues of $A$ are outside the unit circle, then $y$ is a repelling periodic point of $f$.
\end{enumerate}
\end{prop}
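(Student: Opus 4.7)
The plan is to reduce everything to the case of a hyperbolic fixed point by working with $g := f^p$ on a neighborhood of $y$, and then to exploit an adapted norm in which the differential $A = Dg(y)$ is a strict contraction (attracting case) or a strict expansion (repelling case). Throughout we may assume $M$ is a neighborhood of $y$ in some $\mathbb{R}^d$ (the statement is local and $f$ is $C^1$), and write $\|\cdot\|$ for any fixed norm.

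\textbf{Step 1: Adapted norm.} The main tool is the classical fact that for any square matrix $A$ and any $\varepsilon>0$ there exists a norm $\|\cdot\|_*$ on $\mathbb R^d$ with
\[
\rho(A)\le \|A\|_*\le \rho(A)+\varepsilon,
\]
where $\rho(A)$ denotes the spectral radius. (This is obtained from the Jordan form of $A$: conjugate $A$ to $\varepsilon$-scaled Jordan blocks, then take the operator norm coming from a Hermitian inner product in which the resulting matrix has norm $\le \rho(A)+\varepsilon$.) I expect this standard linear-algebra lemma to be the only non-elementary ingredient; the rest is Taylor expansion.

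\textbf{Step 2: Attracting case.} Suppose every eigenvalue of $A$ lies in the open unit disk, so $\rho(A)=:r<1$. Pick $\varepsilon>0$ with $r+\varepsilon<1$ and choose the associated norm $\|\cdot\|_*$, so that $\|A\|_*\le r+\varepsilon =: \lambda <1$. Because $g$ is $C^1$ with $g(y)=y$ and $Dg(y)=A$, Taylor's formula gives
\[
g(x)-y = A(x-y) + R(x),\qquad \frac{\|R(x)\|_*}{\|x-y\|_*}\xrightarrow[x\to y]{} 0.
\]
Fix $\lambda'\in(\lambda,1)$ and choose $r_0>0$ small enough that $\|R(x)\|_* \le (\lambda'-\lambda)\|x-y\|_*$ for all $x$ in the $\|\cdot\|_*$-ball $U := B_*(y,r_0)$. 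Then for every $x\in U$,
\[
\|g(x)-y\|_* \le \|A\|_*\,\|x-y\|_* + \|R(x)\|_* \le \lambda'\|x-y\|_* < r_0,
\]
so $g(U)\subset U$ and, by induction, $\|g^n(x)-y\|_* \le (\lambda')^n\|x-y\|_*\to 0$. Unwinding, $f^{np}(x)\to y$ for every $x\in U$, which is precisely the definition of an attracting periodic point of prime period $p$ for $f$.

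\textbf{Step 3: Repelling case.} Now suppose every eigenvalue of $A$ lies outside the closed unit disk. Then $A$ is invertible and $\rho(A^{-1})<1$, so by Step 1 there exists a norm $\|\cdot\|_*$ and $\mu>1$ with $\|A^{-1}\|_*<\mu^{-1}$; equivalently $\|Av\|_*\ge \mu\|v\|_*$ for all $v\in\mathbb R^d$. Writing again $g(x)-y = A(x-y)+R(x)$ with $\|R(x)\|_*/\|x-y\|_* \to 0$, pick $\mu'\in(1,\mu)$ and choose $r_0>0$ small enough that $\|R(x)\|_*\le(\mu-\mu')\|x-y\|_*$ on $U:=B_*(y,r_0)$. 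Then for $x\in U$,
\[
\|g(x)-y\|_* \ge \|A(x-y)\|_* - \|R(x)\|_* \ge \mu'\|x-y\|_*.
\]
Consequently, for every $x\in U\setminus\{y\}$, as long as $g^k(x)$ still lies in $U$ we have $\|g^{k+1}(x)-y\|_*\ge \mu'\|g^k(x)-y\|_*$, so by induction $\|g^n(x)-y\|_*\ge (\mu')^n\|x-y\|_*$. Since $\mu'>1$ and $\|x-y\|_*>0$, the right-hand side exceeds $r_0$ for some finite $n$; hence $f^{np}(x)=g^n(x)\notin U$, which is the definition of a repelling periodic point.

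The main obstacle is purely formal, namely justifying the adapted-norm lemma of Step 1; once it is granted, both conclusions reduce to a one-line Taylor estimate followed by a geometric iteration argument. The hypothesis of hyperbolicity is used exactly to obtain a uniform contraction rate $\lambda<1$ (resp.\ expansion rate $\mu>1$) that dominates the remainder $R$ in a small enough neighborhood.
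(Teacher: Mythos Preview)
Your proof is correct; the adapted-norm argument you give is one of the standard textbook proofs of this fact. Note, however, that the paper does not actually prove this proposition: it appears in the Appendix as a reminder of classical material, stated without proof and with a pointer to \cite{Deva,Teschl}. So there is nothing to compare against---your proposal simply supplies a complete argument where the paper only quotes the result.
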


Note that a hyperbolic periodic point which is neither attracting nor repelling is called a saddle point. In this case, it is of interest to define the stable and unstable sets of the mapping $f^p$.

\begin{Def}[stable and unstable sets]
Let $y$ be a hyperbolic periodic point of $f$ of prime period $p$. We define the stable and unstable sets $W^s(y)$ and $W^u(y)$ as 
\begin{eqnarray}
W^s(y)&:=&\left\lbrace x\in M\ |\lim_{n\rightarrow+\infty}f^{np}(x)=y\right\rbrace\\
W^u(y)&:=&\left\lbrace x\in M\ |\lim_{n\rightarrow-\infty}f^{np}(x)=y\right\rbrace.
\end{eqnarray} 
\end{Def}

Note that if $y$ is a attracting periodic point of $f$, then  there exists an open neighborhood $U$ of $y$ such that $W^u(y)\cap U = \lbrace y\rbrace$ and $W^s(y)\cap U = U$. Similarly, if $y$ is repelling, then there exists an open neighborhood $U$ of $y$ such that $W^s(y)\cap U = \lbrace y\rbrace$ and $W^u(y)\cap U = U$.

\begin{The}[Hartman-Grobman Theorem]
Let $y$ be a hyperbolic fixed point of $f$. Then there exists a neighborhood $U$ of $y$ and a homeomorphism $h:U \to M$ such that
\begin{equation*}
f_{|U} = h^{-1}\circ A \circ h,
\end{equation*}
where $A$ is the Jacobian matrix of $f$ at $y$.
\end{The}

This theorem states that in a neighborhood of a hyperbolic fixed point, $f$ is topologically conjugated to its linearization. So in this neighborhood, the behavior of the dynamical system is qualitatively the same as the one of its linearization. This leads to the stable manifold theorem.
\begin{The}
Let $y$ be a hyperbolic fixed point of $f$. Then
\begin{enumerate}
\item
	$W^s(y)$ is a smooth manifold and its tangent space at $y$ is the stable space of the linearization of $f$ at $y$.
\item
	$W^u(y)$ is a smooth manifold and its tangent space at $y$ is the unstable space of the linearization of $f$ at $y$.
\end{enumerate}
\end{The}

\end{document}